\newtheorem{num}{Num}[subsection]
\newtheorem{prop}[num]{Proposition}
\newtheorem{lem}[num]{Lemma}
\newtheorem{cor}[num]{Corollary}
\newtheorem{thm}[num]{Theorem}
\theoremstyle{definition}
\newtheorem{defi}[num]{Definition}
\newtheorem{rmk}[num]{Remark}
\newtheorem{eg}[num]{Example}
\newcommand{\mcl}[1]{\ensuremath{\mathcal{#1}}}
\newcommand{\mbb}[1]{\ensuremath{\mathbb{#1}}}
\newcommand{\wtl}[1]{\ensuremath{\widetilde{#1}}}
\newcommand{\prive}{\ensuremath{\backslash}}
\newcommand{\llb}{\ensuremath{\llbracket}}
\newcommand{\rrb}{\ensuremath{\rrbracket}}
\newcommand{\llpar}{(\negthinspace(}
\newcommand{\rrpar}{)\negthinspace)}
\DeclareMathOperator{\Spec}{Spec}
\DeclareMathOperator{\ord}{ord}
\newcommand{\red}{\ensuremath{\mathrm{red}}}
\newcommand{\gp}{\ensuremath{\mathrm{gp}}}
\newcommand{\sat}{\ensuremath{\mathrm{sat}}}
\newcommand{\intm}{\ensuremath{\mathrm{int}}}
\newcommand{\fs}{\ensuremath{\mathrm{fs}}}
\newcommand{\loc}{\ensuremath{\mathrm{loc}}}
\newcommand{\Hom}{\ensuremath{\mathrm{Hom}}}
\newcommand{\Gal}{\ensuremath{\mathrm{Gal}}}
\newcommand{\cX}{\mcl{X}}
\newcommand{\LL}{\mbb{L}}
\newcommand{\N}{\mbb{N}}
\newcommand{\Z}{\mbb{Z}}
\newcommand{\Q}{\mbb{Q}}
\newcommand{\R}{\mbb{R}}
\newcommand{\C}{\mbb{C}}
\newcommand{\A}{\mbb{A}}
\newcommand{\Sm}{\mathrm{Sm}}
\newcommand{\Var}{\ensuremath\mathrm{Var}}
\mathchardef\mhyp="2D %http://www.logic.at/staff/salzer/etc/mhyphen/
\newcommand{\loga}[1]{\ensuremath{\mcl{#1}^{\dagger}}}
\title{Computing motivic zeta functions on log smooth models}
\author{Emmanuel Bultot}
\address{KU Leuven, Department of Mathematics, Celestijnenlaan 200B, 3001 Heverlee, Belgium}
\email{emmanuel@bultot.im }
\author{Johannes Nicaise}
\address{Imperial College,
Department of Mathematics, South Kensington Campus,
London SW7 2AZ, UK, and KU Leuven, Department of Mathematics, Celestijnenlaan 200B, 3001 Heverlee, Belgium} \email{j.nicaise@imperial.ac.uk}
\begin{document}
\begin{abstract}
We give an explicit formula for the motivic zeta function in terms of a log smooth model. It generalizes the classical formulas for snc-models, but it gives rise
to much fewer candidate poles, in general. This formula plays an essential role in recent work on motivic zeta functions of degenerating Calabi-Yau varieties
by the second-named author and his collaborators.
 As a further illustration, we explain how the formula for Newton non-degenerate polynomials can be viewed as a special case of our results.
\end{abstract}

\thanks{MSC2010:14E18;14M25. Keywords: motivic zeta functions, logarithmic geometry, monodromy conjecture}

\maketitle

\section{Introduction}
 Denef and Loeser's motivic zeta function is a subtle invariant associated with hypersurface singularities over a field $k$ of characteristic zero.
  It can be viewed as a motivic upgrade of Igusa's local zeta function for polynomials over $p$-adic fields.
 The motivic zeta function is a power series
 over a suitable Grothendieck ring of varieties, and it can be specialized to more classical invariants of the singularity, such as the Hodge spectrum. The main open problem
  in this context is the so-called {\em monodromy conjecture}, which predicts that each pole of the motivic zeta function is a root of the Bernstein polynomial of the hypersurface.

 One of the principal tools in the study of the motivic zeta function is its explicit computation on a log resolution \cite[3.3.1]{DL-barc}.
 While this formula gives a complete list of candidate poles of the zeta function, in practice most of these candidates tend to cancel out for reasons that are not well understood. Understanding this cancellation phenomenon is the key to the monodromy conjecture. The aim of this paper is to establish a formula for the motivic zeta function in terms of {\em log smooth} models instead of log resolutions (Theorem \ref{thm:main}). These log smooth models can be viewed as partial resolutions with toroidal singularities. Our formula generalizes the computation on log resolutions, but typically gives substantially fewer candidate poles (Proposition \ref{prop:poles}). A nice bonus is that, even for log resolutions, the language of log geometry allows for a cleaner and more conceptual proof of the formula for the motivic zeta function in \cite{NiSe}, and to extend the results to arbitrary characteristic (Corollary \ref{cor:snc}). We will also indicate in Section \ref{sec:curves} how our formula gives a conceptual explanation for the determination of the set of poles of the motivic zeta function of a curve singularity; this is the only dimension in which the monodromy conjecture has been proven completely.
    A special case of our formula has appeared in the literature under a different guise, namely, the calculation of motivic zeta functions of hypersurfaces that are non-degenerate with respect to their Newton polyhedron \cite{guibert}. We will explain the precise connection (and correct some small errors in \cite{guibert}) in Section \ref{sec:nondeg}.

    Our results apply not only to Denef and Loeser's motivic zeta function, but also to the motivic zeta functions of degenerations of Calabi-Yau varieties that were introduced in \cite{HaNi}. Here the formula in terms of log smooth models is particularly relevant in the context of the Gross-Siebert program on toric degenerations and Mirror Symmetry, where log smooth models appear naturally in the constructions. We have already applied our formula to compute the motivic zeta function of the degeneration of the quartic surface in \cite{NOR}. Our formula is also used in an essential way in \cite{HaNi-CY} to prove an analog of the monodromy conjecture for a large and interesting class of degenerations of Calabi-Yau varieties (namely, the degenerations with monodromy-equivariant Kulikov models).

 The main results in this paper form a part of the first author's PhD thesis \cite{PhD}. They were announced in \cite{cras}.

% We will work in a more general setting, namely, motivic zeta functions of pairs $(\mfk{X},\omega)$ where $\mfk{X}$ is a generically smooth formal $k\llbt \rrb $-scheme formally of %finite type and  $\omega$ is a volume form on its generic fiber. This allows us to apply our formula both to Denef and Loeser's motivic zeta functions and to the motivic zeta %functions of degenerations of Calabi-Yau varieties that were introduced in \cite{HaNi}.

\subsection*{Acknowledgements} We are grateful to Wim Veys for his suggestion to interpret the results in Section \ref{sec:curves} in the context of logarithmic geometry.
  The first author was supported by a PhD grant from the Fund of Scientific Research -- Flanders (FWO).
   We would also like to thank the referee for their valuable and thoughtful comments.
  The second author was supported by the ERC Starting Grant MOTZETA (project 306610) of the European Research Council, and by long term structural funding (Methusalem
 grant) of the Flemish Government.

\subsection*{Notations and conventions}
   The general theory of logarithmic schemes that we will use is explained in \cite{kato-log,kato,GaRa}. For the reader's convenience, we have included a user-friendly introduction to regular log schemes and their fans in Section \ref{sec:logsch}. These results are not new, but they are scattered in the literature. This is not meant to be a self-contained introduction to logarithmic geometry: we assume that the reader is familiar with the basic definitions of the theory, as explained for instance in Sections 1--4 of \cite{kato-log}. In the remainder of the paper, we will frequently refer back to this section for auxiliary results on logarithmic geometry.
   
      Unless explicitly stated otherwise, all logarithmic structures in this paper are defined with respect to the Zariski topology, like in \cite{kato},
  and all the log schemes are Noetherian and fs (fine and saturated).
  This means that they satisfy condition (S) in \cite[1.5]{kato}, and are, in addition, quasi-compact.
   We will discuss a generalisation of our results to the Nisnevich topology in Section \ref{sec:etale}.

 Log schemes will be denoted by symbols of the form $\mcl{X}^\dagger$, and the underlying scheme will be denoted by $\mcl{X}$.
 We write $M_{\mcl{X}^\dagger}$ for the sheaf of monoids on  $\mcl{X}^\dagger$.
 We will follow the convention in \cite{GaRa} and speak of regular log schemes and smooth morphisms between log schemes instead of
 log regular log schemes and log smooth morphisms. When we refer to geometric properties of the underlying schemes instead, this will always be clearly indicated.

\section{Monoids}
\subsection{Generalities}
 For general background on the algebraic theory of monoids, we refer to \cite[\S4]{GaRa}.
All monoids are assumed to be commutative, and we will most often use the additive notation $(M,+)$, with neutral element $0$.
%We denote the category of monoids by $\catMnd$.
 The embedding of the category of abelian groups into the category of monoids has a left adjoint $(\cdot)^{\gp}$, which is called the groupification functor. For every monoid $M$,
 we have a canonical morphism of monoids $M\to M^{\gp}$ by adjunction. The monoid $M$ is called {\em integral} if this morphism is injective. This is equivalent to saying that the addition on $M$ satisfies the cancellation property, that is, $x+z=y+z$ implies $x=y$ for all $x$, $y$ and $z$ in $M$. An integral monoid $M$ is called {\em saturated} if an element $m$ in $M^{\gp}$ belongs to $M$ if and only if there exists an integer $d>0$ such that $dm$ belongs to $M$.  We say that a monoid $M$ is {\em fine} if it is finitely generated and integral.

 The embedding of the category of integral monoids into the category of monoids has a left adjoint, which is denoted by $(\cdot)^{\intm}$. For every monoid $M$, we have a canonical morphism $M\to M^{\intm}$ by adjunction, and this morphism is surjective. Since every group is integral, the morphism $M\to M^{\gp}$ factors through $M^{\intm}$. The induced morphism $M^{\intm}\to M^{\gp}$ is injective, so that we can identify $M^{\intm}$ with the image of the groupification morphism $M\to M^{\gp}$. Likewise, the embedding of the category of saturated monoids into the category of monoids has a left adjoint $(\cdot)^{\sat}$, and we can identify $M^{\sat}$ with the submonoid of $M^{\gp}$ consisting of the elements $m$ such that $dm$ belongs to $M^{\intm}$ for some integer $d>0$. A monoid $M$ is integral, resp.~saturated, if and only if the natural morphisms $M\to M^{\intm}$, resp.~$M\to M^{\sat}$, are isomorphisms.

  For every monoid $M$, we denote by $M^\times$ the submonoid of invertible elements of $M$. The monoid $M$ is called {\em sharp} when $M^{\times}=\{0\}$.
  Note that sharp monoids are, in particular, torsion free, because all torsion elements are invertible. We denote by $M^\sharp$ the sharp monoid  $M/M^\times$, called the {\em sharpification} of $M$. We set $M^+ = M\prive M^\times$,  the unique maximal ideal of $M$.
%   If $M$ is a monoid and $\mfk{p}$ a prime ideal of $M$, then we set $\mathrm{ht}(\mfk{p}) = \dim M_{\mfk{p}}$ and we call this invariant the \emph{height} of $\mfk{p}$. It is
% equal to the rank of the abelian group $(M_{\mfk{p}}^\sharp)^{\gp}$ by \cite[4.4.10]{GaRa}.
 %An integral monoid $M$ is called \emph{valuative} if for any $x\in M^\gp$, either $x\in M$ or $x^{-1}\in M$.
  For every monoid $M$, we denote by $M^\vee$ its dual monoid: $M^{\vee}=\Hom(M,\mbb{N})$. We will also consider the submonoid $M^{\vee,\loc}$ of $M^{\vee}$ consisting of local homomorphisms $M\to \mbb{N}$, that is, morphisms $\varphi:M\to \N$ such that $\varphi(m)\neq 0$ for every $m\in M^+$.

\begin{rmk}\label{rema:cones}
 When working with monoids, it is useful to keep in mind the following more concrete description: if $M$ is a fine, saturated and torsion free monoid, then we can identify $M$ with the monoid of integral points of the convex rational polyhedral cone $\sigma$ generated by $M$ in the vector space $M^{\gp}\otimes_{\Z}\R$.
  Conversely, for every convex rational polyhedral cone $\sigma$ in $\R^n$, the intersection $M=\sigma\cap \Z^n$ is a fine, saturated and torsion free monoid, by Gordan's Lemma (see \cite[4.3.22]{GaRa}).
 The monoid $M$ is sharp if and only if $\sigma$ is strictly convex. This correspondence between fine, saturated and torsion free monoids and convex rational polyhedral cones
 preserves the faces, by \cite[4.4.7]{GaRa}: the faces of $M$ are precisely the intersections of the faces of $\sigma$ with the lattice $M^{\gp}$.
\end{rmk}

A \emph{monoidal space} is a topological space $T$ endowed with a sheaf $M$ of monoids.
 If $(T,M)$ is a monoidal space, we will denote by $T^\sharp$ the monoidal space obtained by equipping  $T$ with the sheaf $M^\sharp = M/M^\times$, where
\[M^\times(U)= \bigl(M(U)\bigr)^\times\] for every open subspace $U$ in $T$. This construction applies, in particular, to
 the monoidal space $(\mcl{X},M_{\loga{X}})$ associated with a log scheme $\loga{X}$; the result will be denoted by $(\loga{X})^{\sharp}$.

%A morphism of monoids $P\to Q$ is called \emph{vertical} if the image of $P$ in~$Q$ is not contained in a proper face of $Q$.
%A morphism of monoidal spaces $f\colon X\to Y$ is called \emph{vertical} if $\mcl{M}_{Y,f(y)}\to \mcl{M}_{X,x}$ is vertical for every $x\in X$.

\subsection{The root index}

\begin{defi}
 Let $M$ be a fine and saturated monoid endowed with a morphism of monoids $\varphi:\mbb{N}\to M$. The {\em root index} of $\varphi$ is defined to be $0$ if $\varphi(1)$ is invertible. Otherwise,
 it is the largest positive integer $\rho$ such that the residue class of $\varphi(1)$ in $M^{\sharp}$ is divisible by $\rho$.
\end{defi}

Note that such a largest $\rho$ exists because $M^{\sharp}$ is a submonoid of the free abelian group of finite rank $(M^{\sharp})^{\gp}$.
 The importance of the root index lies in the following properties.
 \begin{prop}\label{prop:root}
 Let $M$ be a fine and saturated monoid, and let $\varphi:\mbb{N}\to M$ be a morphism of monoids. Denote by $\rho$ the root index of $\varphi$.
  For every positive integer $d$, we consider the monoid
  $$M(d)=\left(M\oplus_{\mbb{N}}\frac{1}{d}\mbb{N} \right).$$
 \begin{enumerate}
 \item \label{it:rootfine} The monoid $M(d)$ is fine for every $d>0$.

 \item \label{it:rootdiv} If $d$ divides $\rho$, then
 $M^{\sharp}\to (M(d)^{\sat})^{\sharp}$ is an isomorphism,
 and the morphism
 $$\varphi_d:\frac{1}{d}\mbb{N}\to M(d)^{\sat}$$
 has root index $\rho/d$.

 \item \label{it:rootsharp} If $d$ is prime to $\rho$, then the morphisms
$$M^{\times}\to M(d)^{\times},\qquad M(d)^{\times}\to (M(d)^{\sat})^{\times}$$
 are isomorphisms. In particular, if $M$ is sharp, then so are $M(d)$ and $M(d)^{\sat}$.
  \end{enumerate}
 \end{prop}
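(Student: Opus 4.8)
The plan is to introduce a normal form for elements of $M(d)$ and then reduce each assertion to $\Z$-linear algebra in the finitely generated free abelian group $L:=(M^{\sharp})^{\gp}=M^{\gp}/M^{\times}$, which is torsion free because $M$ is saturated: if $nt=0$ in $M^{\gp}$ then $t\in M^{\sat}=M$ and, likewise, $-t\in M$, so $t\in M^{\times}$. Write $u:=\varphi(1)\in M$ and let $e$ denote the image of $\tfrac1d$, so that $M(d)=(M\oplus\N e)/(de=u)$ and, on groups, $M(d)^{\gp}=M^{\gp}\oplus_{\Z}\tfrac1d\Z=(M^{\gp}\oplus\Z e)/(de=u)$. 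Reducing the $e$-exponent modulo $d$ via $de=u$ shows that every element of $M(d)^{\gp}$ is \emph{uniquely} of the form $g+ie$ with $g\in M^{\gp}$ and $0\le i<d$. From this I would deduce \eqref{it:rootfine}: the map $\{(m,i):m\in M,\ 0\le i<d\}\to M(d)$, $(m,i)\mapsto m+ie$, is surjective (reduce exponents) and becomes injective after composing with $M(d)\to M(d)^{\gp}$ (by the uniqueness above, using $M\hookrightarrow M^{\gp}$), hence is bijective; therefore $M(d)\hookrightarrow M(d)^{\gp}$, so $M(d)$ is integral, and it is finitely generated by the images of a generating set of $M$ together with $e$. (Integrality also follows, more conceptually, from the fact \cite[\S4]{GaRa} that amalgamated sums of fine monoids along integral morphisms are fine, together with the observation that every morphism out of $\N$ is integral.) Two by-products will be used below: $M^{\gp}\cap M(d)=M$ inside $M(d)^{\gp}$; and, when $u\notin M^{\times}$, one has $M(d)^{\times}=M^{\times}$ — indeed if $x=m+ie\in M(d)^{\times}$ then $-x=m'+i'e$ in normal form and $x+(-x)=0$ forces, comparing normal forms, either $i=i'=0$ and $m+m'=0$ (so $x=m\in M^{\times}$), or $i+i'=d$ and $m+m'=-u\in M$ (so $u\in M^{\times}$, excluded). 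Finally, $\rho$ being the root index means exactly that $\bar u=\rho\bar w$ in $M^{\sharp}$ with $\bar w$ \emph{primitive} in $L$ (any proper divisor of $\bar w$ lies in $M^{\sharp}$ by saturation and would improve $\rho$); I assume $\rho\ge1$ from now on, the contrary case (where $u$ is invertible and $e$ becomes a unit of $M(d)^{\sat}$) being immediate.

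For \eqref{it:rootsharp} I would assume $\gcd(d,\rho)=1$ and take $x\in(M(d)^{\sat})^{\times}$. Choosing $n>0$ with $nx,-nx\in M(d)$ gives $nx\in M(d)^{\times}=M^{\times}\subseteq M^{\gp}$. Writing $x=g+ie$ in normal form and bringing $nx=ng+(ni)e$ into normal form, the membership $nx\in M^{\gp}$ forces $d\mid ni$, whence $nx=ng+\tfrac{ni}{d}u\in M^{\times}$ and therefore $n\bar g=-\tfrac{ni\rho}{d}\,\bar w$ in $L$. Put $d_{1}:=d/\gcd(d,i)$ and $i_{1}:=i/\gcd(d,i)$; then $d\mid ni$ gives $d_{1}\mid n$, and cancelling $n/d_{1}$ in the torsion free group $L$ leaves $d_{1}\bar g=-\rho i_{1}\bar w$. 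Since $\bar w$ is primitive this forces $d_{1}\mid\rho i_{1}$, hence $d_{1}\mid\rho$ (as $\gcd(d_{1},i_{1})=1$), hence $d_{1}=1$ (as $d_{1}\mid d$ and $\gcd(d,\rho)=1$), i.e.\ $i=0$. Then $n\bar g=\overline{nx}=0$ in $L$, so $g\in M^{\times}$ and $x=g\in M^{\times}$. This shows $M^{\times}\to(M(d)^{\sat})^{\times}$ is onto, hence — being obviously injective — an isomorphism; the sharpness assertion is the special case $M^{\times}=0$.

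For \eqref{it:rootdiv} I would assume $d\mid\rho$, write $\rho=d\rho'$, lift $\bar u=\rho\bar w$ to an equality $u=\rho w+v_{0}$ in $M$ with $w\in M$ and $v_{0}\in M^{\times}$, and set $f:=e-\rho' w\in M(d)^{\gp}$. Then $df=u-\rho w=v_{0}\in M^{\times}\subseteq M(d)$ and likewise $-df\in M(d)$, so $\pm f\in M(d)^{\sat}$ by saturation; thus $f$ is a unit of $M(d)^{\sat}$. In the coordinates $(g,i)\mapsto g+if$ — a normal form on $M(d)^{\gp}$, since $df=v_{0}\in M^{\gp}$ — every element of $M(d)=\langle M,e\rangle$ has the shape $m+if$ with $m\in M$ and $0\le i<d$; matching normal forms as in \eqref{it:rootsharp}, and then using that $M$ is saturated to pass from $kg\in M$ to $g\in M$, one sees that every element of $M(d)^{\sat}$ is of the form $g+if$ with $g\in M$. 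Because $f$ is a unit, reducing modulo units then shows $M^{\sharp}\to(M(d)^{\sat})^{\sharp}$ is surjective; it is injective because $M^{\gp}\cap M(d)=M$ and $L$ is torsion free. Under the resulting identification $(M(d)^{\sat})^{\sharp}\cong M^{\sharp}$, the residue class of $e=f+\rho' w$ is $\rho'\bar w$, which is nonzero and whose largest divisor is exactly $\rho'$ (primitivity of $\bar w$ once more); since $M(d)^{\sat}$ is fine (the saturation of a fine monoid), the root index of $\varphi_d$ is defined, and by the above it equals $\rho'=\rho/d$.

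The hardest part will be the bookkeeping rather than any single idea: pinning $M(d)^{\times}$ down exactly via normal-form comparisons, and running the elementary divisibility argument in \eqref{it:rootsharp} that turns $d\mid ni$ together with $\gcd(d,\rho)=1$ and primitivity of $\bar w$ into $i=0$. The conceptual content is small — once $L$ is known to be torsion free, divisibility becomes a statement about a basis — and the one trick that makes \eqref{it:rootdiv} work is replacing $e$ by $f=e-\rho' w$, which turns the ``$d$-th root of $u$'' into a $d$-th root of a unit, hence into a unit of the saturated monoid.
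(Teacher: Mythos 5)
Your proposal is correct, and it proves the same three statements by the same essential ideas as the paper, but with a more elementary and self-contained execution. For \eqref{it:rootfine} the paper cites the integrality criterion of \cite[4.1]{kato-log}, while you get integrality directly from the uniqueness of the normal form $g+ie$, $0\le i<d$, in $M(d)^{\gp}$. For \eqref{it:rootsharp} the paper quotes exactness of $M\to M(d)^{\sat}$ from \cite[4.4.42(vi)]{GaRa} to reduce to showing $x\in M^{\gp}$, and then concludes ``$d$ is prime to $\rho$, so $d$ divides $i$''; you avoid the citation by computing $M(d)^{\times}=M^{\times}$ by hand (valid because $u\notin M^{\times}$ when $\rho\ge 1$) and by working with a multiple $nx$ instead of $dx$, at the cost of a slightly longer gcd computation in the free group $L$ --- your primitive-vector argument is precisely what makes the paper's divisibility step explicit. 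For \eqref{it:rootdiv} the paper first reduces to $M$ sharp via the universal properties of sharpification, coproduct and saturation, and then constructs the inverse of $M\to (M(d)^{\sat})^{\sharp}$ again by universal properties; you skip the reduction by carrying the unit $v_0$ along and compute $(M(d)^{\sat})^{\sharp}$ explicitly from the normal form in the coordinates $(g,i)\mapsto g+if$. The one trick powering \eqref{it:rootdiv} is identical in both proofs: $f=e-\rho'w$ (the negative of the paper's $(\rho/d)m-\varphi_d(1/d)$) satisfies $df\in M^{\times}$, hence is a unit of $M(d)^{\sat}$. Two small repairs for the write-up: the torsion-freeness of $L$ should be argued from $nt\in M^{\times}$ rather than $nt=0$ (the same saturation argument then gives $t\in M^{\times}$), and the ``immediate'' case $\rho=0$ of \eqref{it:rootdiv} still needs the observations that $e$ is a unit of $M(d)^{\sat}$ and that $M^{\gp}\cap M(d)^{\sat}=M$, both of which follow from your by-products.
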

 \begin{proof}
 \eqref{it:rootfine} It is obvious that $M(d)$ is finitely generated. It is also integral because we can apply the criteria of \cite[4.1]{kato-log} to the morphism $\N\to (1/d)\N$.

 \eqref{it:rootdiv}
   Assume that $d$ divides $\rho$. We may suppose that $M$ is sharp, because the morphism
   $$(M(d)^{\sat})^{\sharp}\to (M^{\sharp}(d)^{\sat})^{\sharp}$$
   is an isomorphism by the universal properties of sharpification, coproduct and saturation.
       Let $m$ be an element of $M$ such that $\rho m=\varphi(1)$.
  Then $$d((\rho/d)m-\varphi_d(1/d))=0,$$ so that $(\rho/d)m-\varphi_d(1/d)$ is a unit in $M(d)$ and
  $(\rho/d)m=\varphi_d(1/d)$ in $(M(d)^{\sat})^{\sharp}$. Using the universal properties of the coproduct, saturation and sharpification, together with the fact that $M$ is sharp and saturated, we obtain a morphism of monoids $(M(d)^{\sat})^{\sharp}\to M$ that sends the residue class of $\varphi_d(1/d)$ to $(\rho/d)m$ and that is inverse to   $M\to (M(d)^{\sat})^{\sharp}$. It follows at once that
 $$\varphi_d:\frac{1}{d}\mbb{N}\to M(d)^{\sat}$$
 has root index $\rho/d$.

\eqref{it:rootsharp} Assume that $d$ is prime to $\rho$.
 It suffices to prove that the composed morphism $M^{\times}\to (M(d)^{\sat})^{\times}$ is an isomorphism, because
 $M(d)^{\times}\to (M(d)^{\sat})^{\times}$ is injective since $M(d)$ is integral.
 Let $x$ be an invertible element in $M(d)^{\sat}$. We must show that $x$ lies in $M$.
  Since $M\to M(d)^{\sat}$ is exact by \cite[4.4.42(vi)]{GaRa}, it is enough to prove that $x\in M^{\gp}$.
 We can write $x$ as $(m,i/d)$ with $m\in M^{\gp}$ and $i\in \{0,\ldots,d-1\}$.
 Since $M$ is saturated, the element $dx$ lies in $M$, and hence in $M^{\times}$ because we can apply the same argument to the inverse of $x$. This means that $dm=-i\varphi(1)$ in $M^{\sharp}$.
 But $d$ is prime to $\rho$, so that $d$ divides $i$.
 Hence, $i=0$ and $x\in M^{\gp}$.
 \end{proof}

\section{Regular log schemes}\label{sec:logsch}
\subsection{Kato's definition of regularity}
The notion of regularity for logarithmic schemes was introduced by K.~Kato in \cite{kato}. It can be viewed as a generalization of the theory of toroidal embeddings
 in \cite{KKMS}. An important advantage of the logarithmic approach is that it works equally well in mixed characteristic; moreover, the monoidal structure on logarithmic schemes
 keeps track in an efficient way of the cones that describe the local toroidal structure.

 Let $\mcl{X}^{\dagger}$ be a log scheme. Thus $\loga{X}$ consists of a scheme $\mcl{X}$, equipped with a sheaf of monoids
 $M_{\loga{X}}$ and a morphism of sheaves of monoids $$\alpha\colon (M_{\loga{X}},+)\to (\mathcal{O}_{\mcl{X}},\times)$$ such that the induced morphism $\alpha^{-1}(\mathcal{O}^{\times}_{\mcl{X}})\to \mathcal{O}^{\times}_{\mcl{X}}$ is an isomorphism. In particular, $\alpha$ identifies the invertible elements in $M_{\loga{X}}$ and $\mathcal{O}_{\mcl{X}}$. For every point $x$ of $\mcl{X}$, we call the monoid $$M^{\sharp}_{\loga{X},x}=M_{\loga{X},x}/M^{\times}_{\loga{X},x}\cong M_{\loga{X},x}/\mathcal{O}^{\times}_{\mcl{X},x}$$ the {\em characteristic monoid} of $\loga{X}$ at $x$.
  We say that the log structure is {\em trivial} at $x$ if $M^{\sharp}_{\loga{X},x}=\{0\}$, that is, if every element in the stalk $M_{\loga{X},x}$ is invertible.

 Assume that $\loga{X}$ is Noetherian and fs (fine and saturated). Thus it satisfies condition (S) in \cite[1.5]{kato}. Then the characteristic monoids of $\loga{X}$ are sharp, fine and saturated.
 For every point $x$ of the underlying scheme $\mcl{X}$, we denote by $I_{\mcl{X}^{\dagger},x}$ the ideal in $\mathcal{O}_{\mcl{X},x}$ generated by the image of
 $M^+_{\loga{X},x}=M_{\loga{X},x}\setminus M^{\times}_{\loga{X},x}$ in $\mathcal{O}_{\mcl{X},x}$.
 The log scheme $\loga{X}$ is called {\em regular} at $x$ if $\mathcal{O}_{\mcl{X},x}/I_{\loga{X},x}$ is regular and
 $$\dim(\mathcal{O}_{\mcl{X},x})=\dim(\mathcal{O}_{\mcl{X},x}/I_{\loga{X},x})+\dim(M^{\sharp}_{\loga{X},x}).$$
  Here $\dim(M^{\sharp}_{\loga{X},x})$ denotes the Krull dimension of the monoid $M^{\sharp}_{\loga{X},x}$, which can also be expressed as the rank of the free abelian group $(M^{\sharp}_{\loga{X},x})^{\gp}$ by \cite[4.4.10]{GaRa}.
  An important consequence of the definition is that every regular log scheme is normal, by \cite[4.1]{kato}. Every quasi-compact fine and saturated log scheme that is smooth over a regular log scheme is itself regular, by \cite[8.2]{kato}.

 The idea behind the definition of regularity for log schemes is that, when $\mcl{X}^{\dagger}$ is regular at $x$, the lack of regularity of the scheme $\mcl{X}$ at $x$ is encoded in the characteristic monoid $M^{\sharp}_{\mcl{X}^{\dagger},x}$.
 By Remark, \ref{rema:cones}, we can think of $M^{\sharp}_{\loga{X},x}$ as the monoid of integral points in a strictly convex rational polyhedral cone, and this cone describes the toroidal structure of $\mcl{X}$ at $x$. An explicit description of the completed local ring of $\mcl{X}$ at $x$ in terms of the characteristic monoid can be found in \cite[3.2]{kato}.

\begin{eg}\label{ex:snc}
An important class of logarithmic structures are the {\em divisorial} log structures. Let $\cX$ be a Noetherian integral scheme, and let $D$ be a Weil divisor on $\cX$. Then $D$ gives rise to a log structure $M$ on $\cX$, called the divisorial log structure induced by $D$. The sheaf of monoids $M$ is defined by
$$M=\mathcal{O}_{\mcl{X}}\cap i_* \mathcal{O}^{\times}_{\mcl{X}\setminus D} $$
where $i\colon \mcl{X}\setminus D\to \mcl{X}$ is the open embedding of the complement of $D$ in $\mcl{X}$. Thus the sections of $M$ are the regular functions on $\mcl{X}$ that are invertible outside $D$.

The log scheme $\mcl{X}^{\dagger}=(\mcl{X},M)$ is not fine and saturated, in general. However, if $\mcl{X}$ is regular and the reduced divisor $D_{\red}$ has strict normal crossings, then
$\mcl{X}^{\dagger}$ is regular. If $x$ is a point of $\mcl{X}$ and $(z_1,\ldots,z_n)$ is a regular system of local parameters in $\mathcal{O}_{\mcl{X},x}$ such that $D_{\red}$ is defined by $z_1\cdot \ldots \cdot  z_r=0$ locally at $x$, for some $0\leq r\leq n$, then $I_{\mcl{X}^\dagger,x}$ is the ideal generated by $(z_1,\ldots,z_r)$, and
$M_x/M^{\times}_x$ is a free monoid of rank $r$. A basis for this monoid is given by the residue classes of $z_1,\ldots,z_r$.

Conversely, let $\mcl{X}^{\dagger}$ be a log regular scheme. Let $D$ be the set of the points $x$ of $\mcl{X}$ where the log structure is nontrivial. It follows from \cite[11.6]{kato} that $D$ is a closed subset of $\mcl{X}$ of pure codimension one; thus we can view it
as a reduced Weil divisor on $\mcl{X}$. Then by \cite[11.6]{kato}, the log structure on $\mcl{X}^{\dagger}$ is precisely the divisorial log structure induced by $D$. For every point $x$ of $\loga{X}$, we can interpret the characteristic monoid $$M^{\sharp}_{\loga{X},x}=M_{\loga{X},x}/\mathcal{O}^{\times}_{\mcl{X},x}$$
 as the monoid of effective Cartier divisors on $\Spec \mathcal{O}_{\mcl{X},x}$ supported on the inverse image of $D$.

 Note, however, that $\mcl{X}$ is not necessarily regular. For instance, if $\mcl{X}$ is a toric variety over a field, and $D$ is its toric boundary, then the divisorial log structure induced by $D$ makes $\mcl{X}$ into a regular log scheme (see Example \ref{exam:toric}).
\end{eg}

\subsection{Fans and log stratifications}\label{sec:fans}
Let $\mcl{X}^\dagger$ be a regular log scheme, and consider its associated fan $F(\mcl{X}^\dagger)$ in the sense of \cite[\S10]{kato}. This is a sharp monoidal space whose underlying topological space is the subspace of $\mcl{X}$ consisting of the points $x$ such that $M_{\cX^\dagger,x}^+$ generates the maximal ideal of $\mathcal{O}_{\cX,x}$. The sheaf of monoids on $F(\mcl{X}^\dagger)$ is the pullback of the sheaf $M^{\sharp}_{\loga{X}}=M_{\mcl{X}^\dagger}/\mathcal{O}_{\mcl{X}}^{\times}$ on $\mcl{X}$.
 A dictionary between this notion of fan and the usual notion in toric geometry is provided in Example \ref{exam:toric} below.

  By \cite[10.6.9(ii)]{GaRa}, the natural morphism of monoidal spaces
  $$F(\mcl{X}^\dagger)\to (\loga{X})^{\sharp}=(\mcl{X},M^{\sharp}_{\loga{X}})$$
has  a canonical retraction $\pi\colon  (\loga{X})^{\sharp}\to F(\mcl{X}^\dagger)$. It sends a point $x$ of $\mcl{X}$ to the point of $F(\mcl{X}^\dagger)$ that corresponds to the prime ideal
$I_{\loga{X},x}$ of $\mathcal{O}_{\mcl{X},x}$. The map $\pi$ is open, by \cite[10.6.9(iii)]{GaRa}.
 With the help of this retraction, we can enhance the construction of the Kato fan to a functor from the category of regular log schemes to the category of fans, by sending
 a morphism of regular log schemes $h:\loga{Y}\to \loga{X}$ to the morphism of fans obtained as the composition
 $$\begin{CD}F(\loga{Y})@>>> (\loga{Y})^{\sharp}@>h^{\sharp}>> (\loga{X})^{\sharp}@>\pi>> F(\loga{X}).  \end{CD}$$
See \cite[\S10.6]{GaRa} for additional background.

 For every point $\tau$ of $F(\loga{X})$, we denote by $r(\tau)$ the dimension of the sharp, fine and saturated monoid $M_{F(\loga{X}),\tau}=M^{\sharp}_{\loga{X},\tau}$. We call this number the {\em rank} of $\tau$.
   The fiber $\pi^{-1}(\tau)$ is an irreducible locally closed subset of $\mcl{X}$ of pure codimension $r(\tau)$,
  and it is a regular subscheme of $\mcl{X}$ if we endow it with its reduced induced structure \cite[10.6.9(iii)]{GaRa}.
      We denote this subscheme by $E(\tau)^o$.  Locally around each point $x$ of $E(\tau)^o$, the scheme $E(\tau)^o$ is the zero locus of the prime ideal $I_{\loga{X},x}$ of $\mathcal{O}_{\mcl{X},x}$.
      We write $E(\tau)$ for the schematic closure of $E(\tau)^o$ in $\mcl{X}$.
    Then $E(\tau)$ is the disjoint union of the sets $E(\sigma)^o$ where $\sigma$ runs through the closure of $\{\tau\}$ in $F(\loga{X})$, because the retraction $\pi$ is open.
     Thus the collection of subschemes $$\{E(\tau)^o\,|\,\tau\in F(\loga{X})\}$$ is a stratification of $\mcl{X}$, which is called the log stratification of $\loga{X}$.
  It follows immediately from the definitions that $\tau$ is the generic point of $E(\tau)^o$ and $E(\tau)$.

  \begin{prop}\label{prop:cosp}
  Let $\loga{X}$ be a regular log scheme.
  %, and let $\tau$ be a point of the fan $F(\loga{X})$.
  %For every point $x$ of $E(\tau)^o$, the cospecialization morphism of characteristic monoids
  %$$M_{\loga{X},x}^{\sharp}\to M^{\sharp}_{\loga{X},\tau}^{\sharp}= M_{F(\loga{X}),\tau}$$ is an isomorphism.
  %In other words,
  Then the morphism
  $$\pi^{-1}M_{F(\mcl{X}^\dagger)}\to M^{\sharp}_{\loga{X}}$$ is an isomorphism of sheaves of monoids. In particular, the sheaf
  $M^{\sharp}_{\loga{X}}$ is constant along every stratum $E(\tau)^o$ of the logarithmic stratification.
 \end{prop}
\begin{proof}
This is stated without proof in \cite[10.2]{kato}; as observed in \cite[10.6.9(i)]{GaRa}, it follows from the construction of the retraction $\pi$ in \cite[10.6.9(ii)]{GaRa}.
\end{proof}
It follows that, for every point $x$ of $E(\tau)^o$, the monoid $M_{\loga{X},x}^{\sharp}$ has dimension $r(\tau)$; we say that $E(\tau)^o$ is a stratum of rank $r(\tau)$.

\begin{eg}\label{exam:standard}
Let $R$ be a discrete valuation ring.
 We write $S=\Spec R$ and denote by $s$ and $\eta$ the closed and generic point of $S$, respectively.
 We denote by $S^{\dagger}$ the log scheme obtained by endowing $S$ with the divisorial log structure induced by the closed point $s$; this is called the {\em standard} log structure on $S$. Then $M_{S^{\dagger}}(S)=R\setminus \{0\}$ and $M_{S^{\dagger}}(\eta)=K^{\times}$.
 Thus the fan $F(S^{\dagger})$ consists of the underlying topological space $|S|=\{s,\eta\}$ of $S$, endowed with the sheaf of monoids
 $M_{F(S^{\dagger})}$ determined by $$M_{F(S^{\dagger})}(S)=(R\setminus \{0\})/R^{\times}=\N, \quad M_{F(S^{\dagger})}(\eta)=\{0\}.$$
\end{eg}

\begin{eg}\label{exam:toric}
Let $k$ be a field, and let $Y=Y(\Sigma)$ be a toric variety over $k$, associated with a fan $\Sigma$ in $\R^n$.
 We endow $Y$ with the divisorial log structure induced by the toric boundary divisor $D$; the resulting log scheme will be denoted by $Y^{\dagger}$.

 Let $y$ be a point of $Y$, and let $\sigma$ be the unique cone in $\Sigma$ such that $y$ is contained in the torus orbit $O(\sigma)$.
 We will describe the characteristic monoid $M^{\sharp}_{\loga{Y},y}$ at $y$.  By definition, the monoid $M_{\loga{Y},y}$ consists of the functions in $\mathcal{O}_{Y,y}$ that
are invertible outside $D$.
 By \cite[3.3]{fulton}, we can write such a function in a unique way as $u\chi^m$, where $u$ is a unit in $\mathcal{O}_{Y,y}$ and $\chi^m$ is the character associated with an element $m$ of $\sigma^{\vee}\cap \Z^n$. Thus $M^{\sharp}_{\loga{Y},y}=(\sigma^{\vee}\cap \Z^n)^{\sharp}$. Locally around $y$, the zero locus of the elements in $M^+_{\loga{Y},y}$ is the torus orbit $O(\sigma)$. Since $O(\sigma)$ is regular of dimension $$\dim(Y)-\dim(\sigma)= \dim(Y)-\dim(M^{\sharp}_{\loga{Y},y}),$$
 the log scheme $\loga{Y}$ is regular at $y$.

 Our description of the characteristic monoids of $\loga{Y}$ also implies that the logarithmic strata of $\loga{Y}$ are precisely the torus orbits of $Y$.
  Thus the points of the fan $F(\loga{Y})$ are in bijective correspondence with the cones in the fan $\Sigma$.
 If $y$ and $y'$ are points of $F(\loga{Y})$ and we denote by $\sigma$ and $\sigma'$ the corresponding cones in $\Sigma$, then $y'$ lies in the closure of $\{y\}$ if and only if
 $\sigma$ is a face of $\sigma'$. The cospecialization map
 $$M_{F(\loga{Y}),y'}\to  M_{F(\loga{Y}),y}$$ is precisely the inclusion
 $$((\sigma')^{\vee}\cap \Z^n)^\sharp\to (\sigma^{\vee}\cap \Z^n)^\sharp$$ induced by the face map $\sigma\to \sigma'$.
 Thus the fan $F(\loga{Y})$ encodes the monoids of integral points in the cones of $\Sigma$ (or rather, the sharpified dual cones) and how they are glued together in the fan $\Sigma$; see also \cite[9.5]{kato}.
\end{eg}

\subsection{Boundary divisor and divisorial valuations}\label{sec:val}
 Let $\loga{X}$ be a regular log scheme. The {\em boundary} of $\loga{X}$ is the locus of points where the log structure is non-trivial; this is a closed subset of $\mcl{X}$ of pure codimension one, and it is equal to the union of the strata
 $E(\tau)^o$ such that $M_{F(\loga{X}),\tau}\neq 0$.
  We denote by $D$ the reduced Weil divisor supported on the boundary of $\loga{X}$.
 Then the log structure on $\loga{X}$ coincides with the divisorial log structure induced by $D$ (see Example \ref{ex:snc}).
  If we denote by $E_i,\,i\in I$ the prime components of $D$, then
 a point $\tau$ of $\mcl{X}$ belongs to $F(\loga{X})$ if and only if it is a generic point of $\cap_{j\in J}E_j$ for some subset $J$ of $I$, by \cite[2.2.3]{BM} (note that, when $J$ is empty, this intersection equals $\mcl{X}$). In that case, $E(\tau)$ is
 the connected component of $\cap_{j\in J}E_j$ that contains $\tau$, and
   $$E(\tau)^o=E(\tau)\setminus \bigcup_{i\notin J}E_i.$$
 %The closed strata $E(\tau)$ of the log stratification are the irreducible components of $\cX$ and the
 %  irreducible components of the intersections $\cap_{j\in J}E_j$ with $J$ a non-empty subset of $I$.

   \begin{eg}
Let $\cX$ be a quasi-compact regular scheme and let $D$ be a strict normal crossings divisor on $\cX$.
 Let $\loga{X}$ be the log scheme that we obtain by endowing $\cX$ with the divisorial log structure induced by $D$ (see Example \ref{ex:snc}). Then $\loga{X}$ is log regular and its boundary
 coincides with $D$. Conversely, if $\loga{X}$ is a regular log scheme, then the underlying scheme $\mcl{X}$ is regular if and only if $M_{F(\loga{X}),\tau}$ is isomorphic to $\N^{r(\tau)}$  for every $\tau$ in $F(\loga{X})$ \cite[10.5.35]{GaRa}. In that case, the boundary divisor $D$ of $\loga{X}$ has strict normal crossings \cite[2.7]{saito}.
\end{eg}

 Let $\loga{X}$ be any regular log scheme, and fix a point $\tau$ of the fan $F(\loga{X})$. Let $F(\tau)$ be the subspace of $F(\loga{X})$ consisting of the points $\sigma$ such that $\tau$ is contained in the closure of
  $\{\sigma\}$. We denote by $M_{F(\tau)}$ the restriction of the sheaf of monoids $M_{F(\loga{X})}$ to $F(\tau)$. Then the monoidal space $(F(\tau),M_{F(\tau)})$ is canonically isomorphic to the spectrum of the characteristic monoid $M_{F(\loga{X}),\tau}$, by \cite[10.1]{kato} and its proof.
    This implies, in particular, that the prime ideals of height one in $M_{F(\loga{X}),\tau}$ are in bijective correspondence with the
    strata $E(\sigma)$ such that $\sigma$ is a codimension one point in $F(\loga{X})$ whose closure contains $\tau$;
  these are precisely the irreducible components of $D$ that pass through $\tau$.

  If $\mathfrak{p}$ is a prime ideal of height one in $M_{F(\loga{X}),\tau}$ and $\sigma$ is the corresponding point of $F(\tau)$, then
  $$M_{F(\loga{X}),\sigma}\cong (M_{F(\loga{X}),\tau})_{\mathfrak{p}}^{\sharp}= \N.$$
   This monoid is generated by the residue class of a local equation for $E(\sigma)$ at its generic point $\sigma$.
  More generally, for every element $f$ in $M_{\loga{X},\sigma}$, the residue class of $f$ in $M_{F(\loga{X}),\sigma}= \N$
  is the order of $f$ along the component $E(\sigma)$.

  We can also interpret this from the perspective of the dual monoid $M^{\vee}_{F(\loga{X}),\tau}$.
   The complement of a height one prime ideal $\mathfrak{p}$ in $M_{F(\loga{X}),\tau}$
   is a codimension one face of $M_{F(\loga{X}),\tau}$. The dual face
   $$(M_{F(\loga{X}),\tau}\setminus \mathfrak{p})^{\bot}= \{\varphi\in M^{\vee}_{F(\loga{X}),\tau}\,|\,\varphi(m)=0 \mbox{ for all }m\notin \mathfrak{p}\}$$
   of $M^{\vee}_{F(\loga{X}),\tau}$ is generated by
   the localization morphism $$M_{F(\loga{X}),\tau}\to (M_{F(\loga{X}),\tau})_{\mathfrak{p}}^{\sharp}= \N.$$
      The map
 $$\mathfrak{p}\mapsto (M_{F(\loga{X}),\tau}\setminus \mathfrak{p})^{\bot}$$
  is a bijection between the set of height one prime ideals in $M_{F(\loga{X}),\tau}$ and the set of one-dimensional faces
  of $M^{\vee}_{F(\loga{X}),\tau}$ (in view of Remark \ref{rema:cones}, this follows from the classical duality properties of
  convex rational polyhedral cones).

\subsection{Subdivisions}\label{sec:subdiv}
 Let $F'$ be a fine and saturated proper subdivision of the fan $F(\loga{X})$ in the sense of \cite[9.7]{kato}.
 Such a subdivision gives rise to a proper \'etale morphism of log schemes $h:\loga{Y}\to \loga{X}$
 such that $F(\loga{Y})$ is isomorphic to $F'$ over $F(\loga{X})$. Moreover, $h$ is an isomorphism over
  the log trivial locus of $\loga{X}$. More precisely, the discriminant locus of $h:\mcl{Y}\to \mcl{X}$ is the union of
  strata $E(\tau)$ in $\mcl{X}$ such that the morphism of monoidal spaces $F'\to F(\loga{X})$ is not an isomorphism over any open neighbourhood of $\tau$ in $F(\loga{X})$.
 Let $\tau'$ be a point of $F(\loga{Y})$ and denote by $\tau$ its image in $F(\loga{X})$. Then the morphism $M^{\gp}_{F(\loga{X}),\tau}\to M^{\gp}_{F(\loga{Y}),\tau'}$ is surjective by the definition of a subdivision. In particular, $r(\tau)\geq r(\tau')$.
  Denote by $N$ the kernel of $M^{\gp}_{F(\loga{X}),\tau}\to M^{\gp}_{F(\loga{Y}),\tau'}$.
   It follows immediately from the construction of
   the morphism $h\colon \loga{Y}\to \loga{X}$ in the proof of \cite[9.9]{kato}
   that $E(\tau')^o$ is
 a torsor over $E(\tau)^o$ with translation group $$\Spec \Z[N]\cong \mathbb{G}^{r(\tau)-r(\tau')}_{m,\Z}.$$

\begin{eg}
For toric varieties, the subdivisions in \cite[9.6]{kato} correspond to subdivisions of the toric fan {\em via} the dictionary
 provided in Example \ref{exam:toric}. The associated morphism of log schemes is precisely the toric modification associated with
 the subdivision of the toric fan.
\end{eg}

\subsection{Charts}\label{sec:charts}
Let $\loga{X}$ be a fine and saturated log scheme. A {\em chart} for $\loga{X}$ is a strict morphism of log schemes $\loga{X}\to \Spec^\dagger \Z[N]$, where $N$ is a monoid
and we denote by $\Spec^{\dagger}\Z[N]$ the scheme $\Spec \Z[N]$ endowed with the log structure induced by $N\to \Z[N]$. Here {\em strict} means that the log structure on $\loga{X}$ is the pullback of the log structure on $\Spec^{\dagger} \Z[N]$. If $\loga{X}\to \Spec^{\dagger} \Z[N]$ is a chart for $\loga{X}$, then for every point $x$ of $\mcl{X}$, the morphism of monoids $N\to M_{\loga{X},x}$ induces a surjection $N\to M^{\sharp}_{\loga{X},x}$. Thus, up to a multiplicative factor, every element in $M^+_{\loga{X},x}$ lifts to an element of $N^+$.
 Therefore, if $\loga{X}$ is regular, then locally around $x$, the logarithmic stratum that contains $x$ is the zero locus of the image of $N^+$ in $\mathcal{O}_{\mcl{X},x}$. We will use this description in the proof of Lemma \ref{lemm:root}.

  For every fine and saturated log scheme $\loga{X}$ and every point $x$ of $\mcl{X}$, we can find locally around $x$ a chart of the form $\loga{X}\to \Spec^{\dagger} \Z[M^{\sharp}_{\loga{X},x}]$, by  the proof of \cite[10.1.36(i)]{GaRa}. Then the induced morphism of monoids $M^{\sharp}_{\loga{X},x}\to M_{\loga{X},x}$ is a section of the projection morphism
$M_{\loga{X},x}\to M^{\sharp}_{\loga{X},x}$.

 If $f\colon \loga{Y}\to \loga{X}$ is a morphism of fine and saturated log schemes, then a chart for $f$ is a commutative diagram of log schemes
 $$\begin{CD}
 \loga{Y}@>>> \Spec^{\dagger}\Z[P]
 \\ @VfVV @VVgV
 \\ \loga{X} @>>> \Spec^{\dagger}\Z[N]
 \end{CD}$$
where $g$ is induced by a morphism of monoids $N\to P$ and the horizontal maps are charts. Locally on $\loga{Y}$ and $\loga{X}$, we can
always find a chart for $f$ with $N$ and $P$ fine and saturated monoids. More precisely, if $\loga{X}\to \Spec^{\dagger}\Z[N]$ is any chart with $N$ fine and saturated,
then locally on $\loga{Y}$, we can find a morphism of fine and saturated monoids $N\to P$ and a chart for $f$ as in the diagram above. This follows from \cite[10.1.40]{GaRa}, except for the fact that we can choose $P$ to be saturated; that property follows from the proof of  \cite[10.1.37]{GaRa}.

One usually cannot choose the chart
 $\loga{Y}\to \Spec^{\dagger}\Z[P]$ independently of the morphism $f$, because there may not be a morphism of monoids $N\to P$ that makes the diagram commute.
 In practice, one can solve this problem by modifying the monoid $P$; let us discuss the only case that will be used in this paper (in the proof of Lemma \ref{lemm:root}).
  Suppose that we are given charts $\loga{X} \to \Spec^{\dagger}\Z[N]$ and $\loga{Y} \to \Spec^{\dagger}\Z[Q]$, where the monoid $N$ is isomorphic to $\N$.
  Let $y$ be a point of $\mcl{Y}$ and set $x=f(y)$. We denote by $\varphi$ the morphism of monoids $Q\to M_{\loga{Y},y}$ induced by the given chart for $\loga{Y}$.
  Denote by $h$ the image of the generator of $N$ under the composed morphism
  $$N\to M_{\loga{X},x}\to M_{\loga{Y},y}.$$ By the surjectivity of the morphism $Q\to M^{\sharp}_{\loga{Y},y}$, we can find a unit $u$
  in $\mathcal{O}_{\mcl{Y},y}$ such that $uh$ lifts to an element $q$ in $Q$. Now we set $P=Q\times \Z$ and, locally around $y$, we consider the chart for $\loga{Y}$ induced by the morphism
  of monoids $P\to M_{\loga{Y},y}$ that maps $(a,b)$ to $u^b\varphi(a)$. Then $f$ has a chart as in the diagram above, where the morphism of monoids
  $N\to P$ sends the generator of $N$ to $(q,-1)$.

  \subsection{Smoothness versus regularity}
 For the applications in Section \ref{sec:DL} we will also need the following result, which relates log regularity and log smoothness.
 For the definition of regularity for log schemes with respect to the \'etale topology, we refer to \cite[2.2]{niziol}. It is the direct analog
 of the Zariski case, replacing points by geometric points and local rings by their strict henselizations. The basic theory of logarithmic smoothness can be found in \cite[\S3.3]{kato-log}.

 \begin{prop}\label{prop:regsm}
 Let $R$ be a discrete valuation ring with residue field $k$, and set $S=\Spec(R)$. We denote by $S^{\dagger}$ the scheme $S$ equipped with its standard log structure (see Example \ref{exam:standard}).
Let $\loga{X}$ be a fine and saturated log scheme of finite type over $S^{\dagger}$ (with respect to the Zariski or \'etale topology).
\begin{enumerate}
\item \label{it:flat} If $\loga{X}$ is regular, then $\mcl{X}$ is flat over $S$.
\item \label{it:smreg} If $\loga{X}$ is smooth over $S^{\dagger}$, then it is regular.
\item \label{it:regsm} If $k$ has characteristic zero and $\loga{X}$ is regular, then $\loga{X}$ is smooth over $S^{\dagger}$ .
\end{enumerate}
 \end{prop}
 \begin{proof}
\eqref{it:flat} Assume that $\loga{X}$ is regular. Then $\loga{X}$ is normal; in particular, it does not have any embedded components. For every point $x$ of the special fiber $\mcl{X}_k$, the morphism
$\loga{X}\to S^{\dagger}$ induces a commutative diagram
$$\begin{CD}
M_{\loga{X},x}@>>>\mathcal{O}_{\loga{X},x}
\\ @AAA @AAA
\\ M_{S^{\dagger},s}=R\setminus \{0\}@>>>R
\end{CD}$$
where $s$ denotes the closed point of $S$. If $t$ is a uniformizer in $R$, then $t$ is not invertible in
$\mathcal{O}_{\loga{X},x}$, so that the log structure on $\loga{X}$ is non-trivial at every point of $\mcl{X}_k$.
 However, the definition of regularity implies that the log structure is trivial at every generic point of $\mcl{X}$; thus all the generic points of $\mcl{X}$ are contained in the generic fiber, so that $\mcl{X}$ is flat over $S$.

\eqref{it:smreg} Every smooth fs log scheme over $S^{\dagger}$ is regular by \cite[8.2]{kato} (we can reduce to the Zariski case by passing to an \'etale cover of $\mcl{X}$ where the log structure on $\loga{X}$ is Zariski in the sense of \cite[2.1.1]{niziol}).

\eqref{it:regsm} Assume that $k$ has characteristic zero, and that $\loga{X}$ is regular. Let  $\overline{x}$ be a geometric point on $\mcl{X}_k$, and set $M=M^{\sharp}_{\loga{X},\overline{x}}$.
 We choose a chart $\loga{X}\to \Spec^{\dagger}\Z[M]$ \'etale-locally around $\overline{x}$. We also choose a uniformizer $t$ in $R$; this choice determines a chart $S^{\dagger}\to \Spec^{\dagger}\Z[\N]$ such that the induced morphism $\N\to M_{S^{\dagger},s}$ maps $1$ to $t$. Then we can find an element $m$ in $M$ and a unit $u$ in $\mathcal{O}_{\mcl{X},\overline{x}}$ such that
 $t=um$ in $\mathcal{O}_{\mcl{X},\overline{x}}$.
  Since $k$ has characteristic zero, we can take arbitrary roots of invertible functions on $\mcl{X}$ locally in the \'etale topology on $\mcl{X}$. Thus there exists a morphism $\psi$ from the free abelian group $M^{\gp}$ to $\mathcal{O}^{\times}_{\mcl{X},\overline{x}}$
  that maps $m$ to $u$. Multiplying the morphism of monoids $M\to \mathcal{O}_{\mcl{X},\overline{x}}$ with the restriction of $\psi$ to $M$, we obtain a new chart $\loga{X}\to \Spec^{\dagger}\Z[M]$ \'etale-locally around $\overline{x}$ such that the pullback of $m$ is equal to $t$.

  This implies that, \'etale locally around every geometric point $\overline{x}$ on $\mcl{X}_k$, we can find a chart for $\loga{X}\to S^{\dagger}$ of the form
        $$\begin{CD}
      \loga{X}@>>> \Spec^{\dagger} \Z[M]
       \\ @VVV @VVV
     \\  S^{\dagger}@>>> \Spec^{\dagger}  \Z[\N]
       \end{CD}$$
 The morphism $\N\to M$ is injective, because $M$ is integral and  $t$ is not invertible at $\overline{x}$.
  The order of the torsion part of  $\mathrm{coker}(\N^{\gp}=\Z\to M^{\gp})$ is invertible in $k$, by our assumption that $k$ has characteristic zero.
 Moreover, the morphism of schemes
 $$\mcl{X}\to \Spec \Z[M] \times_{\Spec \Z[\N]}S$$
 is smooth over a neighbourhood of $\overline{x}$, by the local description of regular log schemes in \cite[3.2(1)]{kato}.
  Thus it follows from Kato's logarithmic  criterion for smoothness \cite[3.5]{kato-log} that $\loga{X}\to S^{\dagger}$ is smooth.
 \end{proof}

Beware that Proposition \ref{prop:regsm}\eqref{it:regsm} does not extend to the case where $k$ has characteristic $p>0$.  The problem is that we cannot take $p$-th roots of
all invertible functions locally in the \'etale topology, and that the order of the torsion part of the cokernel of the morphism $\Z\to M^{\gp}$ may not be invertible in $k$. A sufficient condition for log smoothness is given by the following statement. Let $\loga{X}$ be a regular log scheme of finite type over $S^{\dagger}$ (with respect to the \'etale topology). Suppose moreover that $k$ is perfect,
 the log structure on $\loga{X}$ is vertical (that is, it is trivial on $\mcl{X}_K$),
 the generic fiber $\mcl{X}_K$ is smooth over $K$, and the multiplicities of the components in the special fiber are prime to $p$. Then $\loga{X}$ is smooth over $S^{\dagger}$. This follows from the same argument as in the proof of Proposition \ref{prop:regsm}.

\subsection{Fine and saturated fibered products}\label{sec:fsfib}
An important role in this paper is played by fibered products in the category of fine and saturated log schemes. Let us briefly describe their structure for further reference.
 Let $\loga{X}\to \loga{Z}$ and $\loga{Y}\to \loga{Z}$ be morphisms of fine and saturated log schemes.
  The fibered product $\loga{W}$ of $\loga{X}$ and $\loga{Y}$ over $\loga{Z}$ in the category of log schemes
  can be constructed by endowing the scheme $\mcl{W}=\mcl{X}\times_{\mcl{Z}}\mcl{Y}$ with the fibered coproduct of the
  pullbacks of the log structures on $\loga{X}$ and $\loga{Y}$ over the pullback of the log structure on $\loga{Z}$ (see \cite[1.6]{kato-log}).
 Let $w$ be a point of $\mcl{W}$ that maps to $x$, $y$ and $z$ in $\mcl{X}$, $\mcl{Y}$ and $\mcl{Z}$, respectively. Then it follows directly from
 the construction that the characteristic monoid $M^{\sharp}_{\loga{W},w}$ is canonically isomorphic to
 $$\left(M^{\sharp}_{\loga{X},x}\oplus_{M^{\sharp}_{\loga{Z},z}} M^{\sharp}_{\loga{Y},y}\right)^{\sharp}.$$
  If  $M^{\sharp}_{\loga{Z},z}\to M^{\sharp}_{\loga{X},x}$ and  $M^{\sharp}_{\loga{Z},z}\to M^{\sharp}_{\loga{Y},y}$ are injective,
   then the coproduct
   $$M^{\sharp}_{\loga{X},x}\oplus_{M^{\sharp}_{\loga{Z},z}} M^{\sharp}_{\loga{Y},y}$$ is already sharp, by \cite[4.1.12]{GaRa}.

 The log structure on $\loga{W}$ is coherent \cite[2.6]{kato-log}, but in general it is neither integral, nor saturated. The fibered product
 $$\loga{X}\times^{\fs}_{\loga{Z}}\loga{Y}$$ in the category of fine and saturated log schemes can be constructed by
 consecutively applying the functors $(\cdot)^{\mathrm{f}}$ and $(\cdot)^{\mathrm{fs}}$ from \cite[10.2.36(i)]{GaRa} to $\loga{W}$.
 A subtle point of this construction is that it changes the underlying scheme $\mcl{W}$.
  If $\loga{W}\to \Spec^{\dagger}\Z[N]$ is a chart for $\loga{W}$, with $N$ a finitely generated monoid, then
 the underlying scheme of $\loga{X}\times^{\fs}_{\loga{Z}}\loga{Y}$ is given by
 $$\mcl{W}\times_{\Z[N]}\Z[N^{\sat}].$$
  The natural morphism $$\loga{X}\times^{\fs}_{\loga{Z}}\loga{Y}\to \loga{W}$$ is a finite morphism on the underlying schemes \cite[10.2.36(ii)]{GaRa}, and it is an isomorphism over the open subscheme of $\loga{W}$ where the log structure is trivial.
 For every point  $w'$ lying above $w$, we have a canonical isomorphism
  $$M^{\sharp}_{\loga{X}\times^{\fs}_{\loga{Z}}\loga{Y},w'}\cong ((M^{\sharp}_{\loga{X},x}\oplus_{M^{\sharp}_{\loga{Z},z}} M^{\sharp}_{\loga{Y},y})^{\sat})^{\sharp}.$$
  This is proven in \cite[2.1.1]{nakayama} for log schemes with respect to the \'etale topology, but the proof is also valid for the Zariski topology.

  The most important class of fs fibered products for our purposes is described in the following proposition.
  \begin{prop}\label{prop:fsfib}
  Let $R$ be a complete discrete valuation ring with quotient field $K$. Let $K'$ be a finite extension of $K$ and denote by $R'$ the integral closure of $R$ in $K'$.
  We denote by $S^{\dagger}$ the scheme $S=\Spec R$ endowed with its standard log structure (see Example \ref{exam:standard}).
  The log scheme $(S')^{\dagger}$ is defined analogously, replacing $R$ by $R'$.

  Let $\loga{X}$ be a fine and saturated log scheme, and let $\loga{X}\to S^+$ be a smooth morphism of log schemes.
  Then the underlying scheme $\mcl{Y}$ of $\loga{Y}=\loga{X}\times^{\fs}_{S^{\dagger}}(S')^{\dagger}$ is the normalization of $\mcl{X}\times_S S'$.
  \end{prop}
  \begin{proof}
    Smoothness is preserved by fine and saturated base change, so that the log scheme $\loga{Y}$ is smooth over $(S')^{\dagger}$. Since $S^{\dagger}$ and $(S')^{\dagger}$ are regular, the log schemes $\loga{X}$ and $\loga{Y}$ are regular, as well, by Proposition \ref{prop:regsm}. Thus $\mcl{X}$ and $\mcl{Y}$ are normal.
    It also follows from Proposition \ref{prop:regsm} that $\mcl{X}$ is flat over $S$ and $\mcl{Y}$ is flat over $S'$.
  The morphism $\mcl{Y}\to \mcl{X}\times_S S'$ is an isomorphism on the generic fibers, because the log structures on $S$ and $S'$ are trivial at their generic points, so that
  $\loga{X}\times_{S^{\dagger}}\Spec(K')$ is already saturated.
 Thus $\mcl{Y}\to \mcl{X}\times_S S'$ is birational; since it is also finite and $\mcl{Y}$ is normal, it is a normalization morphism.
  \end{proof}

\section{Smooth log schemes over discrete valuation rings}
\subsection{Log modifications and ramified base change}\label{sec:stratram}
  Let $R$ be a complete discrete valuation ring with residue field $k$ and quotient field $K$.   We write $S^\dagger$ for the scheme $S=\Spec R$ endowed with its standard log structure (see Example \ref{exam:standard}).
   We fix a uniformizer $t$ in $R$. For every positive integer $n$ we denote by $R(n)$ the extension $R[u]/(u^{n}-t)$ of $R$.
  We write $S(n)^\dagger$ for the scheme $S(n)=\Spec R(n)$ with its standard log structure. The morphism of monoids
  $$((1/n)\N,+)\to (R[u]/(u^n-t),\times)$$ that sends $1/n$ to $u$ induces a chart
  $$S(n)^{\dagger}\to \Spec^{\dagger}\Z[(1/n)\N]$$
  that we call the standard chart for $S(n)^{\dagger}$.
     Whenever $m$ is a positive multiple of $n$, we have a morphism of log schemes $S(m)^{\dagger}\to S(n)^{\dagger}$ associated with the morphism
   of $R$-algebras $$R[u]/(u^n-t)\to R[v]/(v^m-t)\colon u\mapsto v^{m/n}.$$
 The standard charts for $S(m)^{\dagger}$ and $S(n)^{\dagger}$ fit into a chart
 $$\begin{CD}
S(m)^{\dagger} @>>> \Spec^{\dagger}\Z[(1/m)\N]
\\ @VVV @VVV
\\ S(n)^{\dagger} @>>> \Spec^{\dagger}\Z[(1/n)\N]
\end{CD}$$
for the morphism $S(m)^{\dagger}\to S(n)^{\dagger}$, where the morphism of monoids $(1/n)\N\to (1/m)\N$ is the inclusion map.

Let $\loga{X}$ be a smooth fine and saturated log scheme of finite type  over $S^\dagger$.
 Then $\loga{X}$ is regular, by Proposition \ref{prop:regsm}, so that we can apply the constructions from Section \ref{sec:fans} to $\loga{X}$. Proposition \ref{prop:regsm} also implies that
 the underlying scheme $\mcl{X}$ is flat over $S$. We denote by $e_t$ the image of the uniformizer $t$ in the monoid $M_{\loga{X}}(\mcl{X})$.
 Let $x$ be a point in $\mcl{X}_k$.
%% We say that $\tau$ is a {\em vertical} point if it is contained in the special fiber $\mcl{X}_k$ of $\mcl{X}$; otherwise, we call $\tau$ {\em horizontal}. Assume that $\tau$ is vertical.
 Then the structural morphism $\loga{X}\to S^{\dagger}$ induces a local morphism of monoids
 $$\varphi:\mbb{N}\to M^{\sharp}_{\loga{X},x}$$
 that sends $1$ to the image of $e_t$ in $M^{\sharp}_{\loga{X},x}$, which we will still denote by $e_t$.
 We define the {\em root index} $\rho(x)$ to be root index of this morphism $\varphi$.

 Now let $\tau$ be a point of $F(\loga{X})\cap \mcl{X}_k$. Then $M^{\sharp}_{\loga{X},\tau}=M_{F(\loga{X}),\tau}$.
 We set $\rho=\rho(\tau)$, and we
  denote by $\loga{Y}$ the fibered product $$\loga{X}\times^{\fs}_{S^\dagger}S(\rho)^{\dagger}$$ in the category of fine and saturated log schemes (see Section \ref{sec:fsfib}).
   The log scheme $\loga{Y}$ is smooth over $S(\rho)^{\dagger}$ because smoothness is preserved by fs base change.
   The underlying scheme $\mcl{Y}$ is the normalization of $\mcl{X}\times_S S(\rho)$, by Proposition \ref{prop:fsfib}.
    We set $$\widetilde{E}(\tau)^o=\left(\mcl{Y}\times_{\mcl{X}}E(\tau)^o\right)_{\red}.$$
    This is
    %a reduced subscheme of $\mcl{Y}_k$, by Proposition \ref{prop:root}\eqref{it:rootdiv}, and it is
     a union of logarithmic strata of $\loga{Y}$, each of which has characteristic monoid
    $$(M_{F(\loga{X}),\tau}\oplus_\mbb{N}\frac{1}{\rho}\mbb{N})^{\sat,\sharp}.$$
 By Proposition \ref{prop:root}\eqref{it:rootdiv}, we know that the natural morphism
 $$M_{F(\loga{X}),\tau}\to (M_{F(\loga{X}),\tau}\oplus_\mbb{N}\frac{1}{\rho}\mbb{N})^{\sat,\sharp}$$ is an isomorphism. If $\tau$ is a point of $F(\loga{X})$ that is not contained in  $\mcl{X}_k$, then we set $\widetilde{E}(\tau)^o=E(\tau)^o$. 

\begin{eg}\label{exam:torsor}
 Let $\loga{X}$ be a smooth fs log scheme of finite type over $S^{\dagger}$.
 Assume that the underlying scheme $\mcl{X}$ is regular; then $\mcl{X}_k$ is a divisor with strict normal crossings (see Example \ref{ex:snc}).  We write
$$\mcl{X}_k=\sum_{i\in I}N_i E_i$$ where $E_i,\,i\in I$ are the prime components of $\mcl{X}_k$ and the coefficients $N_i$ are their multiplicities.
Let $x$ be a point of $\mcl{X}_k$ and let $J$ be the set of indices $j\in I$ such that $x$ lies on $E_j$. Then there exists an isomorphism of monoids
$$\psi\colon M^{\sharp}_{\loga{X},x}\to \N^{J}\times \N^{h},$$ for some integer $h\geq 0$, such that $\psi(e_t)=((N_j)_{j\in J},0)$. Here $h$ is the number of irreducible components of the boundary of $\loga{X}$ that pass through $x$ and that are horizontal, that is, not contained in the special fiber $\mcl{X}_k$.
 The root index $\rho(x)$ is the greatest common divisor of the
multiplicities $N_j,\,j\in J$.

 If $\tau$ is a point of $F(\loga{X})\cap \mcl{X}_k$ and $\rho(\tau)$ is not divisible by the characteristic of $k$, then
 $\widetilde{E}(\tau)^o\to E(\tau)^o$ has a canonical structure of a $\mu_{\rho(\tau)}$-torsor, which is described explicitly in \cite[\S2.3]{Ni-tameram}.
\end{eg}

The following results constitute a key step in the calculation of the motivic zeta function.

\begin{lem}\label{lemm:root}
Let $\loga{X}$ be a smooth fs log scheme of finite type over $S^\dagger$ and let $\tau$ be a point of $F(\loga{X})\cap \mcl{X}_k$ of root index $\rho=\rho(\tau)$.
 Let $m$ be any positive multiple of $\rho$
and  denote by $\loga{Z}$ be the fibered product $$\loga{X}\times^{\fs}_{S^\dagger}S(m)^{\dagger}$$ in the category of fine and saturated log schemes.
Then the natural morphism
$$(\mcl{Z}\times_{\mcl{X}}E(\tau)^o)_{\red}\to \widetilde{E}(\tau)^o$$
is an isomorphism.
\end{lem}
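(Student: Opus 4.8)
The plan is to reduce the statement, in two stages, to an explicit computation with monoid algebras, having first traded the given setup for one in which the root index is $1$.

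\emph{Reduction to root index $1$.} Since $\rho$ divides $m$, sending $v$ to $u^{m/\rho}$ defines a morphism $S(m)^\dagger\to S(\rho)^\dagger$ over $S^\dagger$, hence a morphism of fs log schemes $\loga{Z}\to\loga{Y}$ over $\loga{X}$; pulling back $E(\tau)^o$ and passing to reduced subschemes produces the morphism in the statement. By associativity of the fs fibered product, $\loga{Z}\cong\loga{Y}\times^{\fs}_{S(\rho)^\dagger}S(\rho)(m/\rho)^\dagger$, where $S(\rho)(e)^\dagger$ denotes the degree $e$ ramified base change of $S(\rho)^\dagger$. By Proposition \ref{prop:root}\eqref{it:rootdiv}, $\widetilde{E}(\tau)^o$ is a union of logarithmic strata of $\loga{Y}$ whose associated points of $F(\loga{Y})$ have root index $\rho/\rho=1$ relative to $S(\rho)^\dagger$, and $(\mcl{Z}\times_{\mcl{X}}E(\tau)^o)_{\red}$ and $(\mcl{Z}\times_{\mcl{Y}}\widetilde{E}(\tau)^o)_{\red}$ are the same reduced closed subscheme of $\mcl{Z}$ (they have the same underlying space). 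Hence, working one stratum of $\loga{Y}$ at a time, we may assume from now on that $\rho(\tau)=1$, so that $\widetilde{E}(\tau)^o=E(\tau)^o$ and $m$ is an arbitrary positive integer; the task becomes to show that $(\mcl{Z}\times_{\mcl{X}}E(\tau)^o)_{\red}\to E(\tau)^o$ is an isomorphism.

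\emph{Localisation.} The assertion is local on $\mcl{X}$; fix $x\in E(\tau)^o$ and put $M=M_{F(\loga{X}),\tau}$ and $N=(M\oplus_{\mbb{N}}\tfrac1m\mbb{N})^{\sat}$, which is fine, saturated, and sharp by Proposition \ref{prop:root}\eqref{it:rootsharp} since $M$ is sharp and $\rho(\tau)=1$. Using Kato's structure theorem for regular log schemes, together with the Zariski analogue of \cite[2.1.1]{nakayama} applied to the fs base change along $S(m)^\dagger\to S^\dagger$, one identifies $\widehat{\mcl{O}}_{\mcl{X},x}$ with the completed monoid algebra $\widehat{\mcl{O}}_{E(\tau)^o,x}[[M]]$ (completed at the ideal generated by $M^{+}$), in such a way that the ideal of $E(\tau)^o$ is generated by $M^{+}$ and $t$ corresponds, up to a unit, to $\varphi(1)\in M\subset\widehat{\mcl{O}}_{E(\tau)^o,x}[[M]]$, and $\widehat{\mcl{O}}_{\mcl{Z},x}$ with $\widehat{\mcl{O}}_{E(\tau)^o,x}[[N]]$. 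The point where the root index enters is that the base of this last algebra is again $\widehat{\mcl{O}}_{E(\tau)^o,x}$, with no algebraic extension: because $\rho(\tau)=1$ and $M$ is saturated, $\varphi(1)$ is a primitive vector of $M^{\gp}$, so after a monomial change of coordinates it becomes one of the coordinate functions, and the base change along $S(m)^\dagger\to S^\dagger$ merely adjoins an $m$-th root of that function (up to a unit); in particular $\loga{Z}$ has a single logarithmic stratum above $\tau$. The lemma thus reduces to the claim that the quotient of $\widehat{\mcl{O}}_{E(\tau)^o,x}[[N]]$ by the radical of the ideal generated by the image of $M^{+}$ equals $\widehat{\mcl{O}}_{E(\tau)^o,x}$.

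\emph{The monoid computation.} As the radical of a monomial ideal over a reduced ring is monomial, it is enough to show that every $n\in N^{+}$ has a positive multiple lying in the monoid ideal $M^{+}+N$ of $N$. Now $n$ has a positive multiple lying in the image of $M\oplus_{\mbb{N}}\tfrac1m\mbb{N}$, say of the form $\alpha+\varphi_m(b)$ with $\alpha$ in the image of $M$, $b\in\tfrac1m\mbb{N}$, and $\varphi_m:\tfrac1m\mbb{N}\to N$ the structural morphism. If $\alpha\neq 0$, this multiple already dominates a nonzero element of the image of $M^{+}$. If $\alpha=0$, then $b\neq 0$, and since $\rho(\tau)\neq 0$ the element $\varphi_m(1)$ — which equals the image of $\varphi(1)\in M^{+}$ and is nonzero — is dominated by a sufficiently large multiple of $\varphi_m(b)$. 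Therefore $\sqrt{M^{+}\,\widehat{\mcl{O}}_{E(\tau)^o,x}[[N]]}=N^{+}\,\widehat{\mcl{O}}_{E(\tau)^o,x}[[N]]$, so the quotient in question equals $\widehat{\mcl{O}}_{E(\tau)^o,x}[[N]]/(N^{+})=\widehat{\mcl{O}}_{E(\tau)^o,x}$, and the lemma follows.

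The step I expect to be the main obstacle is the localisation: carrying out the passage to completed monoid algebras rigorously — checking that the fs base change along $S(m)^\dagger\to S^\dagger$ is compatible with Kato's local presentation of a regular log scheme, and that, once $\rho(\tau)=1$, this base change produces neither additional connected components of $\mcl{Z}$ nor residue field extensions over $E(\tau)^o$. This is precisely where the hypothesis on the root index is used — through Proposition \ref{prop:root} and the saturatedness of $M$ — and where the extension of \cite[2.1.1]{nakayama} to the Zariski site does the real work.
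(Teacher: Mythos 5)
Your global strategy is the same as the paper's (pass through the intermediate base change to $S(\rho)^\dagger$, reduce to root index $1$, and show that saturation creates no new units so that the reduced fibre over the stratum is the stratum itself), and your Stage 1 and the final monoid computation are fine. The genuine gap is the localisation step, which you yourself flag as the main obstacle and which is where all the content lies. The identification $\widehat{\mcl{O}}_{\mcl{X},x}\cong\widehat{\mcl{O}}_{E(\tau)^o,x}[[M]]$ is not what Kato's structure theorem gives, and it is in fact false when $R$ has mixed characteristic: the right-hand side is an algebra over $\mcl{O}_{E(\tau)^o,x}$, which is a $k$-algebra, hence killed by $p=\car(k)$, whereas $p$ is a nonzerodivisor in $\mcl{O}_{\mcl{X},x}$ because $\mcl{X}$ is flat over $R$. (Kato's theorem describes the completion at a point, and in mixed characteristic it involves a quotient by a relation $\theta$ with constant term $p$, not a monoid algebra over the stratum.) Since the lemma is stated and used for an arbitrary complete discrete valuation ring, the local model on which your whole computation rests does not exist in general; the paper avoids completions altogether and instead works with Zariski-local charts $\loga{Y}\to \Spec \Z[M]$ provided by \cite[10.1.36(i)]{GaRa}, computing the fs base change as $\bigl(\mcl{Y}\times_{R(\rho)}R(m)\bigr)\times_{\Z[N]}\Z[N^{\sat}]$ for a chart of the {\em morphism} $\loga{Y}\to S(\rho)^{\dagger}$.

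The second, related problem is your treatment of the unit discrepancy between $t$ and the monomial lift of $\varphi(1)$. The assertion that, $\varphi(1)$ being primitive, ``after a monomial change of coordinates it becomes one of the coordinate functions'' is false for a general sharp fs monoid $M$: no splitting $M\cong \N\oplus M'$ with $\varphi(1)$ generating the first factor need exist (take $M$ the $A_1$-cone generated by $(1,0),(1,1),(1,2)$ and $\varphi(1)=(1,1)$). And the parenthetical ``up to a unit'' is exactly the point that must be settled: the base change adjoins an $m$-th root of $u$ times a monomial, and whether the reduced fibre over $E(\tau)^o$ is trivial or a nontrivial cover (possibly with residue field extension or extra components) depends on being able to absorb $u$. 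This can be repaired --- primitivity of $\varphi(1)$ in the free group $M^{\gp}$ lets you extend $\varphi(1)\mapsto u$ to a homomorphism $M^{\gp}\to\mcl{O}_{\mcl{X},x}^{\times}$ and retwist the chart --- but you do not give this argument; the paper instead encodes the unit in the chart monoid $M\times\Z$ and then invokes Proposition \ref{prop:root}\eqref{it:rootsharp} to see that saturation produces no new units, which is precisely what trivialises the would-be cover. As it stands, the step where the root index hypothesis actually does its work is asserted rather than proved.
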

\begin{proof}
We set $$\loga{Y}=\loga{X}\times^{\fs}_{S^\dagger}S(\rho)^{\dagger},$$
as before. We have already recalled that the log scheme $\loga{Y}$ is regular and that $\widetilde{E}(\tau)^o$ is a union of logarithmic strata with characteristic monoid
    $$M=(M_{F(\loga{X}),\tau}\oplus_\mbb{N}\frac{1}{\rho}\mbb{N})^{\sat,\sharp}\cong M_{F(\loga{X}),\tau}.$$ The monoid $M$ is endowed with a local morphism $\varphi:(1/\rho)\N\to M$ induced by $\loga{Y}\to S(\rho)^{\dagger}$, and the morphism $\varphi$ has root index $1$ by Proposition \ref{prop:root}\eqref{it:rootdiv}.

 Let $y$ be a point of $\widetilde{E}(\tau)^o\subset \mcl{Y}$.
By Section \ref{sec:charts}, locally around $y$,
we can find a chart
       for the morphism $\loga{Y}\to S(\rho)^{\dagger}$ of the form
       $$\begin{CD}
      \loga{Y}@>>> \Spec^{\dagger} \Z[M\times \Z]
       \\ @VVV @VVV
     \\  S(\rho)^{\dagger}@>>> \Spec^{\dagger} \Z[\frac{1}{\rho}\N]
       \end{CD}$$
       where the lower horizontal morphism is the standard chart for $S(\rho)^{\dagger}$, and
       $(1/\rho)\N\to M\times \Z$ is a morphism of monoids such that the composition with the projection $M\times \Z\to M$ coincides with $\varphi$.
      We set
      $$N=(M\times \Z)\oplus_{\frac{1}{\rho}\N}\frac{1}{m}\N.$$
      As we have recalled in Section \ref{sec:fsfib}, the $fs$ fibered product $\loga{Z}$ is obtained by first taking the fibered product $$\loga{W}=\loga{Y}\times_{S(\rho)^{\dagger}}S(m)^{\dagger}$$ in the category of
      log schemes; the underlying scheme of $\loga{W}$ is the fibered product $\mcl{Y}\times_{S(\rho)}S(m)$, and,  locally around $y$,
      our chart for $\loga{Y}\to S(\rho)^{\dagger}$ induces a chart $\loga{W}\to \Spec^{\dagger} \Z[N]$.
       Then, over some open neighbourhood of $y$ in $\mcl{Y}$,
  the underlying scheme of $\loga{Z}$ is given by
       $$\mcl{Z}= \left(\mcl{Y}\times_{S(\rho)}S(m)\right)\times_{\Z[N]}\Z[N^{\sat}],$$
        and the morphism $\loga{Z}\to \Spec^{\dagger} \Z[N^{\sat}]$ is a chart for the log structure on $\loga{Z}$.

       %Let $z$ be a point of $\mcl{Z}$ that lies above $y$.
       %The scheme $(\mcl{Z}\times_{\mcl{X}}E(\tau)^o)_{\red}$ is a union of logarithmic strata of $\loga{Z}$, each of which %dominates $E(\tau)^o$.
       %        Locally around $z$, it is the zero locus of the image of
       %$(N^{\sat})^+$ in $\mathcal{O}_{\mcl{Z},z}$,
       By Section \ref{sec:charts}, all the elements in the image of the morphism $M^+\times \Z\to \mathcal{O}_{\mcl{Y},y}$ vanish in $\mathcal{O}_{\widetilde{E}(\tau)^o,y}$. By construction, $(m/\rho)N^+$ is contained in $M^+\times \Z$. Thus for every element $h$ in the image of $N^+\to \mathcal{O}_{\mcl{W},y}$, we have that $h^{m/\rho}$ vanishes in $\mathcal{O}_{\widetilde{E}(\tau)^o,y}$. Since $\mathcal{O}_{\widetilde{E}(\tau)^o,y}$ is reduced, we conclude that
       the morphism $\Z[N]\to \mathcal{O}_{\widetilde{E}(\tau)^o,y}$ factors through
       $\Z[N]/\langle N^+\rangle$, where $\langle N^+\rangle$ denotes the ideal generated by $N^+$.

It is obvious that the
   morphism $\Z[N^{\times}]\to \Z[N]$ induces an isomorphism
   $$\Z[N^{\times}]\to \Z[N]/\langle N^+\rangle.$$
   We have a similar isomorphism $$\Z[(N^{\sat})^{\times}]\to \Z[N^{\sat}]/\langle (N^{\sat})^+\rangle$$ for the monoid $N^{\sat}$, and the morphism $\Z[N^{\times}]\to \Z[(N^{\sat})^{\times}]$ is an isomorphism, by Proposition \ref{prop:root}\eqref{it:rootsharp}.
   Moreover, $\langle(N^{\sat})^+\rangle$ is the radical of the ideal in  $\Z[N^{\sat}]$ generated by $N^+$.
          Therefore, over an open neighbourhood of $y$, the $\widetilde{E}(\tau)^o$-scheme $(\mcl{Z}\times_{\mcl{X}}E(\tau)^o)_{\red}$  is isomorphic to
          $$\left(\widetilde{E}(\tau)^o\times_{\Z[N]}\Z[N^{\sat}]\right)_{\red}\cong \widetilde{E}(\tau)^o\times_{\Z[N^{\times}]}\Z[(N^{\sat})^{\times}]\cong \widetilde{E}(\tau)^o.$$
     \end{proof}

\begin{prop}\label{prop:logblup}
Let $\loga{X}$ be a smooth fs log scheme of finite type over $S^\dagger$.
 Let $\psi:F'\to F$ be a fine and saturated proper subdivision of $F=F(\loga{X})$, and denote by $h:\loga{(X')}\to \loga{X}$ the corresponding morphism of log schemes.
 Let $\tau'$ be a point of $F'$ and set $\tau=\psi(\tau')$. Then there exists a natural morphism of $E(\tau)^o$-schemes
 $$\widetilde{E}(\tau')^o\to \widetilde{E}(\tau)^o$$ such that $\widetilde{E}(\tau')^o$ is a $\mathbb{G}^{r(\tau)-r(\tau')}_{m,k}$-torsor over $\widetilde{E}(\tau)^o$.
\end{prop}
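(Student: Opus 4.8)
The idea is to reduce the statement to the two building blocks already established: first, the description in Section~\ref{sec:fans} of the strata of a log modification as torsors under tori, and second, Lemma~\ref{lemm:root}, which controls how the strata behave under ramified base change. Concretely, set $\rho = \rho(\tau)$ and $\rho' = \rho(\tau')$, and let $m$ be any common multiple of $\rho$ and $\rho'$ (for instance $m = \rho\rho'$, or the least common multiple). Form the fs fibered products $\loga{Z} = \loga{X}\times^{\fs}_{S^\dagger}S(m)^\dagger$ and $\loga{Z'} = \loga{(X')}\times^{\fs}_{S^\dagger}S(m)^\dagger$. Since $\psi : F'\to F$ is a proper subdivision and the formation of fans commutes with the log modification $h$, the induced map $\loga{Z'}\to \loga{Z}$ is again the log modification associated to a proper subdivision of $F(\loga{Z})$; here one uses that $S(m)^\dagger\to S^\dagger$ is \emph{strict} (its log structure is pulled back), so that $F(\loga{Z}) = F(\loga{X})$ and $F(\loga{Z'}) = F(\loga{X'})$ as fans over which the relevant cones are subdivided.

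With this setup, the plan is to run the argument at the level of the base-changed log schemes, where the root indices have been killed. By Lemma~\ref{lemm:root} applied to $\loga{X}$ and to $\loga{(X')}$ (note $m$ is a multiple of both $\rho$ and $\rho'$), we have natural isomorphisms
\[
(\mcl{Z}\times_{\mcl{X}}E(\tau)^o)_\red \isoto \widetilde{E}(\tau)^o,
\qquad
(\mcl{Z'}\times_{\mcl{X'}}E(\tau')^o)_\red \isoto \widetilde{E}(\tau')^o.
\]
On the other hand, because $\loga{Z'}\to\loga{Z}$ is the log modification attached to the subdivision $\psi$ (base changed), the torsor description recalled at the end of Section~\ref{sec:fans} applies to the stratum of $\loga{Z'}$ lying over the stratum of $\loga{Z}$: if $N$ denotes the kernel of the surjection $M^{\gp}_{F(\loga{Z}),\tau}\to M^{\gp}_{F(\loga{Z'}),\tau'}$, then the stratum over $\tau'$ is a torsor over the stratum over $\tau$ with translation group $\Spec\Z[N]\cong\mathbb{G}^{r(\tau)-r(\tau')}_{m,\Z}$. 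The ranks are unchanged by the base change, since by Proposition~\ref{prop:root}\eqref{it:rootdiv} the characteristic monoids of the strata of $\loga{Z}$ and $\loga{Z'}$ are isomorphic to those of $\loga{X}$ and $\loga{(X')}$ respectively, so $r(\tau)$ and $r(\tau')$ agree with their unbarred counterparts. Restricting this torsor structure over the locally closed subscheme $E(\tau)^o\hookrightarrow \mcl{X}$ (equivalently, taking the base change along $E(\tau)^o\to \mcl{X}$ and passing to reduced schemes) and transporting through the two isomorphisms above gives the desired morphism $\widetilde{E}(\tau')^o\to\widetilde{E}(\tau)^o$ of $E(\tau)^o$-schemes together with the claimed $\mathbb{G}^{r(\tau)-r(\tau')}_{m,k}$-torsor structure over the residue field $k$ of $S(m)$.

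The main obstacle I expect is bookkeeping about compatibility: one must check that the torsor structure on the base-changed strata is genuinely over $E(\tau)^o$ (and not merely over some thickening or quotient), that it is compatible with the two applications of Lemma~\ref{lemm:root}, and that reducing to the special fiber produces $\mathbb{G}_{m,k}$-torsors rather than $\mathbb{G}_{m,\Z}$-torsors — this is just base change of the translation group along $\Spec k\to\Spec\Z$, but it needs to be said. There is also a mild point about characteristic: the torsor description over $\Z$ in Section~\ref{sec:fans} is characteristic-free (it comes directly from the proof of \cite[9.9]{kato}), so no tameness hypothesis is needed here, in contrast with the $\mu_\rho$-torsor statement in Example~\ref{exam:torsor}; one should make sure the argument does not secretly invoke such a hypothesis. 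Finally, one should verify that $\widetilde{E}(\tau')^o\to\widetilde{E}(\tau)^o$ is independent of the auxiliary choice of $m$, which follows from the naturality in Lemma~\ref{lemm:root} applied to a further divisibility $m\mid m''$.
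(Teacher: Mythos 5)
Your overall strategy is the same as the paper's (base change to $S(m)^{\dagger}$ with $m$ divisible by both root indices, identify $\widetilde{E}(\tau)^o$ and $\widetilde{E}(\tau')^o$ with reduced fibers via Lemma \ref{lemm:root}, then invoke the torsor description of strata under a log modification), but the step you use to transport the torsor description to the base-changed situation is justified by a false claim. The morphism $S(m)^{\dagger}\to S^{\dagger}$ is \emph{not} strict: the log structure on $S(m)^{\dagger}$ is the divisorial one generated by the uniformizer $u$ of $R(m)$, whereas the pullback of the log structure of $S^{\dagger}$ is generated by $t=u^m$. Consequently $F(\loga{Z})\neq F(\loga{X})$ in general: the fs base change changes the characteristic monoids to $(M_{F,\tau}\oplus_{\N}\frac{1}{m}\N)^{\sat,\sharp}$ and can create several points of $F(\loga{Z})$ over a single $\tau$ (the generic points of the possibly disconnected cover $\widetilde{E}(\tau)^o$). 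For the same reason your appeal to Proposition \ref{prop:root}\eqref{it:rootdiv} is off: that statement requires $d\mid\rho$, so it only gives $M^{\sharp}_{F(\loga{Z}),\cdot}\cong M_{F,\tau}$ when $m$ equals the root index; for a general common multiple the monoid can change (e.g.\ $xy=t$ base changed to $xy=u^2$ acquires the $A_1$ cone), although the rank, which is all you actually need, is indeed preserved.

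The real content of the proposition is therefore exactly the point your parenthetical sweeps away: one must prove that $h_m:\loga{(Z')}\to\loga{Z}$ is the log modification associated with a subdivision of $F(\loga{Z})$, namely with the refinement $F(\loga{Z})\times^{\fs}_{F}F'\to F(\loga{Z})$ (not with $\psi$ itself, since the fans have changed). The paper establishes this compatibility of log modifications with fs base change by checking the universal property of \cite[9.9]{kato}: the log modification of $\loga{Z}$ attached to this refinement is a final object among log schemes over $\loga{Z}$ whose map to $F(\loga{Z})$ factors through the refinement, and one produces mutually inverse morphisms between it and $\loga{(Z')}$ using the universal properties of the modification $\loga{(X')}\to\loga{X}$ and of the fs base change. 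Without this verification (or some substitute for it), the application of the $\mathbb{G}_m$-torsor description from Section \ref{sec:fans} to $\loga{(Z')}\to\loga{Z}$ is not licensed, so your argument has a genuine gap at its central step; the remaining bookkeeping you flag (restriction over $E(\tau)^o$, passage from $\mathbb{G}_{m,\Z}$ to $\mathbb{G}_{m,k}$, independence of $m$) is indeed routine.
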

\begin{proof}
Let $m$ be a positive integer that is divisible by both $\rho(\tau)$ and $\rho(\tau')$ and set
$$\loga{Z}=\loga{X}\times^{\fs}_{S^\dagger}S(m)^{\dagger},\quad \loga{(Z')}=\loga{(X')}\times^{\fs}_{S^\dagger}S(m)^{\dagger}.$$
By Lemma \ref{lemm:root}, the morphism $h_m:\loga{(Z')}\to \loga{Z}$ obtained from $h$ by fs base change induces a morphism of $E(\tau)^o$-schemes $\widetilde{E}(\tau')^o\to \widetilde{E}(\tau)^o$.

 We will prove that the morphism $h$ induced by the subdivision $\psi$ is compatible with fs base change, in the following sense.
 % By \cite[2.8]{saito}, the natural morphism $$F(\loga{Z})\to F\times^{\fs}_{F(S^{\dagger})}F(S(m)^{\dagger})$$ is a local isomorphism of fine and saturated fans.
 % Then
   The refinement $\psi:F'\to F$ induces a refinement $$\psi_m:F(\loga{Z})\times^{\fs}_{F}F' \to F(\loga{Z})$$
 where the fibered product is taken in the category of fine and saturated fans.
  We claim that the morphism of log schemes induced by this refinement is precisely the morphism $h_m:\loga{(Z')}\to \loga{Z}$.
  Since $\widetilde{E}(\tau)^o$ is a union of logarithmic strata of rank $r(\tau)$ in $\loga{Z}$ and $\widetilde{E}(\tau')^o$ is a union of logarithmic strata of rank $r(\tau')$ in $\loga{(Z')}$, this implies that $\widetilde{E}(\tau')^o$ is a $\mathbb{G}^{r(\tau)-r(\tau')}_{m,k}$-torsor over $\widetilde{E}(\tau)^o$ (see Section \ref{sec:subdiv}).

 It remains to prove that $h_m$ is indeed the morphism induced by the refinement $\psi_m$. The morphism induced by $\psi_m$ is characterized by the following universal property \cite[9.9]{kato}: it is a final object in the category of logarithmic schemes $\loga{W}$ endowed with a morphism $\loga{W}\to \loga{Z}$ and a morphism of monoidal spaces
 $\pi':\loga{W}\to F(\loga{Z})\times^{\fs}_{F}F'$ such that the diagram
 $$\begin{CD}
 \loga{W}@>>> \loga{Z}
 \\ @V\pi' VV @VV\pi V
 \\ F(\loga{Z})\times^{\fs}_{F}F' @>>\psi_m > F(\loga{Z})
 \end{CD}$$
 commutes. If $\loga{W}$ is such a final object, then we have a canonical morphism $\loga{(Z')}\to \loga{W}$ of log schemes over $\loga{Z}$.
 Conversely, applying the universal properties for the morphism $\loga{(X')}\to \loga{X}$ and the fs base change to $S(m)^{\dagger}$, we obtain a morphism $\loga{W}\to \loga{(Z')}$ of log schemes over $\loga{Z}$. These two morphisms are mutually inverse, so that $\loga{W}$ is isomorphic to $\loga{(Z')}$ over $\loga{Z}$.
\end{proof}

\section{Motivic zeta functions}
We denote by $R$ a complete discrete valuation ring with residue field $k$ and quotient field $K$. We assume that $k$ is perfect and we fix a uniformizer $t$ in $R$.
 For every positive integer $n$, we write $R(n)=R[u]/(u^n-t)$ and $K(n)=K[u]/(u^n-t)$. We write $S^\dagger$ and $S(n)^\dagger$ for the schemes $S=\Spec R$ and $S(n)=\Spec R(n)$ endowed with their standard log structures.
\subsection{Grothendieck rings and geometric series}\label{sec:groth}
 If $R$ has equal characteristic, then for every noetherian $k$-scheme $X$, we denote by  $\mathcal{M}_{X}$ the Grothendieck ring of varieties over $X$, localized with respect to the class $\LL$ of the affine line $\mbb{A}^1_{X}$.
 If $R$ has mixed characteristic, we use the same notation, but we replace the Grothendieck ring of varieties by its {\em modified} version, which means that we identify the classes of universally homeomorphic $X$-schemes of finite type -- see \cite[\S3.8]{NS-K0}.
 In the calculation of the motivic zeta function, we will need to consider some specific geometric series in $\LL^{-1}$. The standard technique is to pass to the completion $\widehat{\mathcal{M}}_X$ of $\mathcal{M}_X$ with respect to the dimensional filtration. However, since it is not known whether the completion morphism $\mathcal{M}_X\to \widehat{\mathcal{M}}_X$ is injective,
 we will use a different method to avoid any loss of information. We start with an elementary lemma.

 \begin{lem}\label{lemm:cone}
Let $M$ be a sharp, fine and saturated monoid of dimension $d$.
 For every morphism of monoids $u:M\to \N$, we denote by $u^{\gp}:M^{\gp}\to \Z$ the induced morphism of groups.
Let $m$ be an element of $M^{\gp}$ and let $n$ be an element of $M\setminus \{0\}$.
 Let $u_1,\ldots,u_r$ be the generators of the one-dimensional faces of $M^{\vee}$, and denote by $I$ the set
 of indices $i$ in $\{1,\ldots,r\}$ such that $u_i(n)>0$. We assume that $u_j^{\gp}(m)=1$ for every $j\notin I$.
   Then the series
\begin{equation}\label{eq:series}
(L-1)^{d-1}\sum_{u \in M^{\vee,\loc} } L^{-u^{\gp}(m)}  T^{u(n)}
\end{equation}
in the variables $L$ and $T$ lies in the subring
$$\Z[L,L^{-1},T]\left[\frac{T}{1-L^{-u^{\gp}_i(m)}T^{u_i(n)}}\right]_{i\in I}$$
of $\Z\llb L^{-1},T \rrb [L]$.
\end{lem}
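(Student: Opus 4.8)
### Proof proposal

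The plan is to exhibit the sum over $M^{\vee,\loc}$ as a sum over a triangulation of the cone $\sigma^\vee = M^\vee \otimes_\Z \R$ into simplicial subcones, and to recognize the contribution of each simplicial cone as an explicit rational function in $L$ and $T$. First I would choose a finite fan structure refining $M^\vee$: pick a simplicial subdivision $\Delta$ of $\sigma^\vee$ whose one-dimensional rays are among the original rays $\R_{\geq 0}u_1,\ldots,\R_{\geq 0}u_r$ (a simplicial subdivision without introducing new rays; since $M$ is saturated and fine, $\sigma^\vee$ is a rational polyhedral cone and such a subdivision exists). Each top-dimensional cone $\delta$ of $\Delta$ is spanned by a subset $\{u_i : i\in S_\delta\}$ of the generators, with $|S_\delta| \leq d$; the relative interiors of the cones of $\Delta$ partition $\sigma^\vee$, hence partition the lattice points $M^{\vee} = \sigma^\vee \cap (M^{\gp})^\vee$, and the local homomorphisms $M^{\vee,\loc}$ are exactly the lattice points in the relative interior of $\sigma^\vee$ (those $u$ with $u(m)>0$ for all $m\in M^+$).

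Next I would compute, for a fixed cone $\delta$ spanned by $\{u_i\}_{i\in S}$ (assumed simplicial, so these rays are part of a $\Z$-basis after passing to the saturation — more precisely the lattice points in $\delta$ form a monoid generated by $u_i$, $i\in S$, together with finitely many "box" elements coming from the fundamental parallelepiped), the partial sum $\sum_{u} L^{-u^\gp(m)}T^{u(n)}$ over lattice points $u$ in the relative interior of $\delta$. Writing such a lattice point as $u = \sum_{i\in S} a_i u_i + b$ with $a_i \in \Z_{\geq 0}$, $a_i \geq 1$ for the interior condition along $\delta$, and $b$ ranging over a finite set of box representatives, the sum factors as a finite sum (over $b$) of products over $i\in S$ of geometric series
\begin{equation*}
\sum_{a_i \geq 1} \bigl(L^{-u_i^\gp(m)}T^{u_i(n)}\bigr)^{a_i} = \frac{L^{-u_i^\gp(m)}T^{u_i(n)}}{1-L^{-u_i^\gp(m)}T^{u_i(n)}}.
\end{equation*}
Here I would use the hypothesis crucially: for $j\notin I$ we have $u_j(n)=0$, so the corresponding factor is $L^{-u_j^\gp(m)}/(1-L^{-u_j^\gp(m)}) = L^{-1}/(1-L^{-1})$ since $u_j^\gp(m)=1$ by assumption; this is an element of $\Z[L,L^{-1}]$ localized at $(1-L^{-1})$, equivalently $-1/(L-1)\cdot L^{-1}\cdot(\dots)$, in any case it contributes only factors of $1/(L-1)$ and Laurent monomials in $L$. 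The factor $(L-1)^{d-1}$ out front is there precisely to cancel these unwanted denominators: each interior cone $\delta$ of dimension $e$ contributes $e$ geometric factors, at most $e-1$ of which can have $u_i(n)>0$ unless... — so I would track carefully that the number of "bad" factors $1/(1-L^{-1})$ appearing across the whole sum never exceeds $d-1$, which is where the full-dimensionality and the face condition enter. The cones $\delta$ contained in a face not meeting the interior contribute nothing since $M^{\vee,\loc}$ only sees interior points.

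The main obstacle I expect is the bookkeeping needed to show the power of $1/(L-1)$ (equivalently of $1/(1-L^{-1})$) produced by all the boundary-type geometric factors is at most $d-1$, so that multiplication by $(L-1)^{d-1}$ lands us in $\Z[L,L^{-1},T]$ localized only at the $1-L^{-u_i^\gp(m)}T^{u_i(n)}$ with $i\in I$. Concretely: for each simplicial interior cone $\delta$ with ray set $S$, partition $S = (S\cap I) \sqcup (S\setminus I)$; the $S\cap I$ rays give the allowed denominators, the $S\setminus I$ rays give factors $L^{-1}/(1-L^{-1})$, and I must argue $|S\setminus I| \leq d-1$. This follows because the rays $u_j$ with $j\notin I$ all lie in the hyperplane $\{u : u(n)=0\}$ (using $n \neq 0$, so this is a genuine hyperplane in $(M^\gp)^\vee\otimes\R$), hence span a cone of dimension $\leq d-1$, so any simplicial cone can contain at most $d-1$ of them. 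Granting that, each summand over $\delta$ and $b$ is, after multiplying by $(L-1)^{d-1}$: a Laurent polynomial in $L$ times $T^{(\text{something})}$ times a product of $\frac{T^{u_i(n)}}{1-L^{-u_i^\gp(m)}T^{u_i(n)}}$ for $i\in S\cap I$ (absorbing leftover powers of $T$ and the numerator monomials into $\Z[L,L^{-1},T]$), which visibly lies in the asserted subring; summing the finitely many such contributions finishes the proof. I would also remark at the end that the identity of formal power series in $\Z\llb L^{-1},T\rrb[L]$ is legitimate because each geometric series $\sum_{a\geq 1}(L^{-c}T^e)^a$ with $e\geq 0$ — and $c$ arbitrary when $e\geq 1$, $c=1>0$ when $e=0$ — converges $(L^{-1},T)$-adically, so the rearrangement into the finite sum over cones and box elements is valid term by term.
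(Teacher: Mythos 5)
Your argument is essentially the paper's own proof: subdivide $M^{\vee}$ into simplicial cones without adding new rays, factor the sum over each piece through a fundamental parallelepiped into geometric series along the rays, and note that the rays with $u_i(n)=0$ (at most $d-1$ per contributing cone, since no cone whose relative interior consists of local morphisms can have all its rays in the hyperplane $\{u : u(n)=0\}$) only contribute factors of $1/(1-L^{-1})$, which the prefactor $(L-1)^{d-1}$ cancels. The one small slip is the parametrization of the relative-interior lattice points as $b+\sum_{i\in S}a_iu_i$ with $a_i\geq 1$ and $b$ in the standard half-open box, which misses points in non-unimodular cones (e.g.\ $(1,1)$ in the cone spanned by $(1,0)$ and $(1,2)$); the correct bookkeeping, as in the paper, takes $b$ in the parallelepiped with coordinates in $(0,1]$ and $a_i\geq 0$, giving factors $1/(1-L^{-u_i^{\gp}(m)}T^{u_i(n)})$ instead of $x_i/(1-x_i)$, and this changes nothing else since for $i\in I$ one has $\frac{1}{1-L^{-u_i^{\gp}(m)}T^{u_i(n)}}=1+L^{-u_i^{\gp}(m)}T^{u_i(n)-1}\cdot\frac{T}{1-L^{-u_i^{\gp}(m)}T^{u_i(n)}}$, which lies in the asserted subring.
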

\begin{proof}
A sharp, fine and saturated monoid is called {\em simplicial} if
 its number of one-dimensional faces is equal to its dimension.
 We can subdivide $M^{\vee}$ into a fan of simplicial monoids without inserting new one-dimensional faces, and such a subdivision
 gives rise to a partition of $M^{\vee,\loc}$. Thus we may assume from the start that $M^{\vee}$ is simplicial, so that $d=r$ and
 $u_1,\ldots,u_r$ form a basis for the $\Q$-vector space $(M^{\vee})^{\gp}\otimes_{\Z}\Q$.
  Denote by $P$ the fundamental parallelepiped
  $$P=\{\lambda_1u_1+\ldots+\lambda_r u_r\in M^{\vee,\loc}\,|\,\lambda_i\in \Q\cap (0,1]\,\}$$
  in $M^{\vee,\loc}$. Then $P$ is a finite set, and we have
  $$\eqref{eq:series}=(L-1)^{r-1}\left(\sum_{u\in P}L^{-u^{\gp}(m)}  T^{u(n)}\right)\prod_{i=1}^r \frac{1}{1-L^{-u^{\gp}_i(m)}T^{u_i(n)}}.$$
  Now the result follows from the assumption that for every $i$, either $u_i(n)>0$ or $u^{\gp}_i(m)=1$; note that at most $r-1$ of the values $u_i(n)$ vanish,  because $n\neq 0$.
\end{proof}

 Keeping the notations and assumptions of Lemma \ref{lemm:cone}, we {\em define}
 $$(\LL-1)^{d-1}\sum_{u \in M^{\vee,\loc} } \LL^{-u^{\gp}(m)}  T^{u(n)}$$
as the value of $$(L-1)^{d-1}\sum_{u \in M^{\vee,\loc} } L^{-u^{\gp}(m)}  T^{u(n)}$$ at $L=\LL$. Lemma \ref{lemm:cone} guarantees that this is a well-defined element
of $\mathcal{M}_k\llb T \rrb$.

\subsection{Definition of the motivic zeta function}\label{sec:defzeta}
Let $\mcl{X}$ be an $R$-scheme of finite type with smooth generic fiber $\mcl{X}_K$, and let $\omega$ be a volume form on $\mcl{X}_K$ (that is, a nowhere vanishing differential form of maximal degree on each connected component of $\mcl{X}_K$).
 A {\em N\'eron smoothening} of $\mcl{X}$ is a morphism of finite type $h:\mcl{Y}\to \mcl{X}$ such that $\mcl{Y}$ is smooth over $R$, $h_K:\mcl{Y}_K\to \mcl{X}_K$ is an isomorphism, and
 the natural map $\mcl{Y}(R')\to \mcl{X}(R')$ is a bijection for every finite unramified extension $R'$ of $R$. Such a N\'eron smoothening always exists, by \cite[3.1.3]{BLR}.
 For every connected component $C$ of $\mcl{Y}_k$, we denote by $\ord_C\omega$ the unique integer $a$ such that $t^{-a}\omega$ extends to a relative volume form on $\mcl{Y}$ locally around the generic point of $C$.

\begin{defi}[Loeser-Sebag]\label{def:motint}
The motivic integral of $\omega$ on $\mcl{X}$ is defined by
$$\int_{\mcl{X}}|\omega|=\sum_{C\in \pi_0(\mcl{Y}_k)}[C]\LL^{-\ord_C\omega}\quad \in \mathcal{M}_{\mcl{X}_k}$$
where $\mcl{Y}\to \mcl{X}$ is any N\'eron smoothening and $\pi_0(\mcl{Y}_k)$ is the set of connected components of $\mcl{Y}_k$.
\end{defi}
 It is a deep fact that this definition does not depend on the choice of a N\'eron smoothening; the proof relies on the theory of motivic integration \cite{motrigid}.
   Definition \ref{def:motint} can be interpreted as a motivic upgrade of the integral of a volume form on a compact $p$-adic manifold \cite[\S4.6]{motrigid}.

 The motivic zeta function of the pair $(\mcl{X},\omega)$ is a generating series that measures how the motivic integral in Definition \ref{def:motint} changes under ramified extensions of $R$. For every positive integer $n$, we set $\mcl{X}(n)=\mcl{X}\times_R R(n)$, and we denote by $\omega(n)$ the pullback of $\omega$ to the generic fiber of
 $\mcl{X}(n)$.

 \begin{defi}\label{def:motzeta}
The motivic zeta function of the pair $(\mcl{X},\omega)$ is the generating series
$$Z_{\mcl{X},\omega}(T)=\sum_{n>0}\left(\int_{\mcl{X}(n)}|\omega(n)|\right)T^n\quad \in \mathcal{M}_{\mcl{X}_k}\llb T \rrb.$$
\end{defi}

 Beware that this definition depends on the choice of the uniformizer $t$, except when $k$ has characteristic zero and contains all the roots of unity: in that case,
 $K(n)$ is the unique degree $n$ extension of $K$, up to $K$-isomorphism.

If $h:\mcl{X}'\to \mcl{X}$ is a proper morphism of $R$-schemes such that $h_K:\mcl{X}'_K\to \mcl{X}_K$ is an isomorphism, then it follows immediately from the definition that we can recover $Z_{\mcl{X},\omega}(T)$ from $Z_{\mcl{X}',\omega}(T)$ by specializing the coefficients with respect to the forgetful group homomorphism
$\mathcal{M}_{\mcl{X}'_k}\to \mathcal{M}_{\mcl{X}_k}$. Thus we can compute $Z_{\mcl{X},\omega}(T)$ after a suitable proper modification of $\mcl{X}$.
The principal aim of this paper is to establish an explicit formula for $Z_{\mcl{X},\omega}(T)$ in the case where $\mcl{X}$ is smooth over $S^\dagger$ with respect to a suitable choice of log structure on $\mcl{X}$.

\subsection{Explicit formula on a log smooth model}
Let $\loga{X}$ be a smooth fs log scheme of finite type over $S^\dagger$, and denote by $D$ its reduced boundary divisor, which was defined in Section \ref{sec:fans}.  We write $F=F(\loga{X})$ for the fan associated with $\loga{X}$, and we denote by $e_t$ the image of the uniformizer $t$ in the monoid of global sections of $M_{\loga{X}}$.
 We write $F_k$ for the set $F\cap \mcl{X}_k$. This is a finite set, consisting of
 the points in the special fiber $\mcl{X}_k$ whose Zariski closure is a connected component of an intersection of irreducible components of $D$ (this follows from the description of the logarithmic stratification in Section \ref{sec:fans}).

Let $\omega$ be a differential form of maximal degree on $\mcl{X}_K$ that is nowhere vanishing on $\mcl{X}_K\setminus D$.
 Then we can view $\omega$ as a rational section of the relative canonical bundle $\omega_{\loga{X}/S^\dagger}$. As such, it defines a Cartier divisor on
 $\mcl{X}$, which we denote by $\mathrm{div}_{\loga{X}}(\omega)$. This divisor is supported on $D$.
 Let $\tau$ be a point of $F$.
  For every element $u$ of $M_{F,\tau}^{\vee,\loc}$, we set $u(\omega)=u^{\gp}(\overline{f})\in \Z$, where $\overline{f}$ is the residue class
   in $M^{\gp}_{F,\tau}$ of any element $f\in M^{\gp}_{\loga{X},\tau}$ such that  $\mathrm{div}(f)=\mathrm{div}_{\loga{X}}(\omega)$ locally at $\tau$.
   This definition does not depend on the choice of $f$.
   Note that $u(\omega)>0$ if $\tau$ is not contained in $F_k$, because $\mathrm{div}_{\loga{X}}(\omega)\geq D$ on $\mcl{X}_K$.

\begin{thm}\label{thm:main}
Let $\loga{X}$ be a smooth fs log scheme of finite type over $S^\dagger$. We assume that the generic fiber $\mcl{X}_K$ is smooth over $K$ (but we allow the log structure on $\loga{X}$ to be nontrivial on $\mcl{X}_K$). Let $\omega$ be a volume form on $\mcl{X}_K$.
 Then for every $\tau$ in $F_k$, the expression
 \begin{equation}\label{eq:geometric}
 (\LL-1)^{r(\tau)-1}
\sum_{u \in M_{F,\tau}^{\vee,\loc} } \LL^{-u(\omega)}  T^{u(e_t)}
\end{equation}
 is well-defined in $\mathcal{M}_{\mcl{X}_k}\llb T \rrb$, and
  the motivic zeta function of $(\mcl{X},\omega)$ is given by
\begin{equation}\label{eq:motzeta}
Z_{\mcl{X},\omega}(T)=\sum_{\tau\in F_k}  [\wtl{E}(\tau)^o] (\LL-1)^{r(\tau)-1}
\sum_{u \in M_{F,\tau}^{\vee,\loc} } \LL^{-u(\omega)}  T^{u(e_t)} \end{equation}
in $\mathcal{M}_{\mcl{X}_k}\llb T \rrb$.
%If the log structure on $\loga{X}$ is vertical, then this formula holds already in $\mathcal{M}_{\mcl{X}_k}\llb T \rrb $.
\end{thm}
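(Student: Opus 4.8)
The strategy is to reduce the computation of $Z_{\mcl{X},\omega}(T)$ to the construction of an explicit N\'eron smoothening of each base change $\mcl{X}(n)$, stratum by stratum, and then to assemble the contributions into the claimed geometric series. Fix a positive integer $n$. By the discussion preceding Definition \ref{def:motzeta} we may work with any proper modification of $\mcl{X}(n)$ with isomorphic generic fiber; the natural candidate is a log modification that resolves the base change so that its underlying scheme becomes regular with strict normal crossings special fiber. Concretely, $\loga{X}\times^{\fs}_{S^\dagger}S(n)^\dagger$ is again smooth over $S(n)^\dagger$, and after choosing a fine saturated proper subdivision of its fan that makes the underlying scheme regular (Proposition \ref{prop:logblup} controls how strata behave under such subdivisions), we obtain an snc model of $\mcl{X}(n)_{K(n)}$. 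One then applies the classical recipe: the N\'eron smoothening of an snc model is obtained by removing the intersection loci and extracting the appropriate $\mu$-covers determined by the multiplicities, and $\ord_C\omega$ is read off from the multiplicities of $\mathrm{div}_{\loga{X}}(\omega)$ and of the special fiber along $C$.

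\textbf{Key steps, in order.} First I would fix $\tau\in F^{\verti}$ and analyze the contribution of the locally closed stratum $E(\tau)^o$ to $\int_{\mcl{X}(n)}|\omega(n)|$. The relevant combinatorial datum is the monoid $M=M_{F,\tau}$ together with the two elements $e_t\in M$ (image of $t$) and the residue class of a local equation $f$ for $\mathrm{div}_{\loga{X}}(\omega)$ in $M^\gp$; a local homomorphism $u\colon M\to\N$ corresponds to a component of the special fiber of a sufficiently fine subdivision meeting the stratum, with multiplicity $N_u=u(e_t)$ and with $\ord$ of the form equal to $u(\omega)+$(a shift accounting for the log differentials, i.e. $r(\tau)-1$). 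The base change to $R(n)$ multiplies the relevant lattice by $\frac1n$, and the divisibility bookkeeping from Proposition \ref{prop:root} tells us exactly which covers split: by Lemma \ref{lemm:root}, once $n$ is a multiple of $\rho(\tau)$ the reduced fiber over $E(\tau)^o$ stabilizes to $\wtl{E}(\tau)^o$, and the $\mu$-cover structure is governed by the root index of the induced map $\frac1n\N\to M$. Collecting the contributions of all components lying over $E(\tau)^o$ in $\mcl{X}(n)$ produces, up to the factor $[\wtl{E}(\tau)^o]$, a sum over the finitely many $u\in M^{\vee,\loc}$ with $u(e_t)\mid n$ of $\LL^{-u(\omega)-(\text{shift})}$. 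Second, I would sum over $n>0$ and over all vertical $\tau$: for a fixed $\tau$ the $n$-sum is a geometric-type series whose coefficient at $T^n$ picks out those $u$ with $u(e_t)=n$ (the divisibility condition being absorbed because each $u$ contributes to every multiple of $u(e_t)$, and a careful count of the $\mu$-cover classes telescopes the overcounting into the factor $(\LL-1)^{r(\tau)-1}$), yielding exactly $(\LL-1)^{r(\tau)-1}\sum_{u\in M_{F,\tau}^{\vee,\loc}}\LL^{-u(\omega)}T^{u(e_t)}$. Third, well-definedness of \eqref{eq:geometric} in $\mcl{M}_{\mcl{X}_k}\llb T\rrb$ is precisely Lemma \ref{lemm:cone} applied with $M=M_{F,\tau}^\sharp$ (which is sharp, fine and saturated of dimension $r(\tau)$), $n\rightsquigarrow e_t$ and $m\rightsquigarrow\ovl f$: the hypothesis $u_j^\gp(\ovl f)=1$ for the rays not seeing $e_t$ holds because along a horizontal component through $\tau$ one has $\mathrm{ord}=1$ in $\mathrm{div}_{\loga{X}}(\omega)$, as noted just before the theorem; hence one invokes the \emph{definition} of the series as the value at $L=\LL$ given after Lemma \ref{lemm:cone}.

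\textbf{The main obstacle.} The delicate point is the passage, for fixed $\tau$, from ``components of an snc subdivision of $\mcl{X}(n)$ lying over $E(\tau)^o$, with their $\mu$-covers'' to the clean closed formula with coefficient ring element $[\wtl{E}(\tau)^o](\LL-1)^{r(\tau)-1}$ — i.e. showing that the subdivision dependence and the $\mu$-cover combinatorics conspire to give exactly a fundamental-parallelepiped sum times $\prod(1-\LL^{-u_i(\omega)}T^{u_i(e_t)})^{-1}$, matching the expansion in the proof of Lemma \ref{lemm:cone}. This requires: (i) identifying the $\mu_{\rho(\tau')}$-torsor $\wtl{E}(\tau')^o\to E(\tau')^o$ for subdivision strata $\tau'$ with the base-change torsor, which is Proposition \ref{prop:logblup} combined with Example \ref{exam:torsor}; (ii) checking that the classes $[\wtl{E}(\tau')^o]$ for the rays of a simplicial cone times the torus-fibration factors $(\LL-1)^{\dim}$ reassemble into $[\wtl{E}(\tau)^o](\LL-1)^{r(\tau)-1}$ via the torsor relations of Proposition \ref{prop:logblup} (a $\mbb{G}_m^{r(\tau)-r(\tau')}$-torsor has class $(\LL-1)^{r(\tau)-r(\tau')}$ times the base); and (iii) verifying that the $\ord_C$ of $\omega(n)$ computed on the snc model equals $u(\omega)$ on the nose — this is the log-differential computation $\omega_{\loga{X}/S^\dagger}$ pulls back compatibly under fs base change and subdivision, which is where log smoothness is used essentially (so that $\omega_{\loga{X}/S^\dagger}$ is a line bundle and $\mathrm{div}_{\loga{X}}(\omega)$ makes sense). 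Once (i)–(iii) are in place, summing over $\tau\in F^{\verti}$ and over $n$, and recognizing the resulting double series as \eqref{eq:motzeta} via Lemma \ref{lemm:cone}, completes the proof. I expect the bookkeeping in (ii) — the torsor-class telescoping across the subdivision — to be the technical heart, while (iii) is conceptually routine given Kato's theory but must be stated carefully in mixed characteristic using the modified Grothendieck ring.
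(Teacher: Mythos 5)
Your toolbox is the paper's own (Lemma \ref{lemm:cone} for well-definedness with $u(\omega)=1$ on the horizontal rays, resolving proper subdivisions, Lemma \ref{lemm:root}, Proposition \ref{prop:logblup}, and the fact that the smooth locus of a regular model is a N\'eron smoothening), and your Step 1 coincides with the paper's. But the core of your plan --- converting, for each fixed $n$, the multiplicity-one strata of an snc resolution of $\loga{X}\times^{\fs}_{S^\dagger}S(n)^{\dagger}$ lying over $E(\tau)^o$ into the closed expression $[\wtl{E}(\tau)^o](\LL-1)^{r(\tau)-1}\sum_{u(e_t)=n}\LL^{-u(\omega)}$ --- is exactly what you defer as ``the technical heart'' in (ii), so the proposal does not yet contain a proof of \eqref{eq:motzeta}. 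Moreover, the sketch of that step has incorrect bookkeeping. The claim that $\ord_C\omega(n)$ equals $u(\omega)$ plus a shift $r(\tau)-1$ is wrong (and contradicts your own point (iii)): on a multiplicity-one vertical stratum the structural morphism is strict there, so $\ord_C\omega=u(\omega)$ on the nose, while when horizontal boundary passes through the stratum the discrepancy is $u'(1,\ldots,1)$, which disappears only after summing a geometric series over the horizontal directions --- this is precisely the non-vertical case the theorem explicitly allows, so it cannot be waved away. Likewise, ``each $u$ contributes to every multiple of $u(e_t)$'' would double-count: the multiples of a primitive ray generator are themselves distinct elements of $M_{F,\tau}^{\vee,\loc}$, and the coefficient of $T^n$ involves exactly those $u$ with $u(e_t)=n$; there is no divisibility condition to absorb.

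The idea you are missing, which lets the paper avoid any matching with the fundamental-parallelepiped expansion, is to prove \emph{first} that the right-hand side of \eqref{eq:motzeta} is invariant under log modifications: a proper subdivision $\psi\colon F'\to F$ induces a partition of $M_{F,\tau}^{\vee,\loc}$ into the sets $M_{F',\tau'}^{\vee,\loc}$ for $\tau'\in\psi^{-1}(\tau)$, the values $u(\omega)$ and $u(e_t)$ are unchanged because the modification is \'etale as a morphism of log schemes, and Proposition \ref{prop:logblup} gives $[\wtl{E}(\tau')^o]=(\LL-1)^{r(\tau)-r(\tau')}[\wtl{E}(\tau)^o]$; hence the formula is invariant term by term (Lemma \ref{lemm:cone} is used only for well-definedness, not for any expansion). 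Combined with compatibility with fs base change --- Lemma \ref{lemm:root}, the bijection between local morphisms $u'$ on the base-changed fan with $u'(e_{t(n)})=i$ and local morphisms $u$ with $u(e_t)=in$, and the fs-compatibility of $\omega_{\loga{X}/S^\dagger}$ --- this reduces everything to computing the coefficient of $T$ alone on a model whose total space is regular; there every contributing stratum admits some $u$ with $u(e_t)=1$, has root index $1$, and satisfies $\wtl{E}(\tau)^o=E(\tau)^o$, so no cover or torsor combinatorics enter the integral computation at all. If you supply such a subdivision-invariance statement (or a correct per-$n$ counting substitute) and repair the $\ord$ bookkeeping in the non-vertical case, your outline becomes the paper's argument carried out in a different order.
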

\begin{proof}
We break up the  proof into four steps.

\medskip
{\em Step 1: the expression \eqref{eq:geometric} is well-defined.}
Since $\omega$ is a volume form on $\mcl{X}_K$, the horizontal part of the divisor $\mathrm{div}_{\loga{X}}(\omega)$ coincides with the horizontal part of the reduced boundary divisor $D$ of $\loga{X}$. This means that  $u(\omega)=1$ for every $\tau\in F_k$ and every generator $u$ of a one-dimensional face of $M_{F,\tau}$ such that $u(e_t)=0$.
 Hence, Lemma \ref{lemm:cone} guarantees that \eqref{eq:geometric} is a well-defined element of $\mathcal{M}_{\mcl{X}_k}\llb T \rrb$.

\medskip
{\em Step 2: invariance under log modifications.}
%We say that a proper subdivision $\psi:F'\to F$ is vertical if for every point $\tau$ of $F$ that is not vertical, the
%set $\psi^{-1}(\tau)$ consists of a single point $\tau'$. This implies that the morphism $M_{F,\tau}\to M_{F',\tau'}$ is an isomorphism.
 We will show that the right hand side of \eqref{eq:motzeta} is invariant under the log modification
$h:\loga{(X')}\to \loga{X}$ induced by any fs proper subdivision $\psi:F'\to F$ that is an isomorphism over $F\cap \mcl{X}_K$, or equivalently, such that $h_K:\mcl{X}'_K\to \mcl{X}_K$ is an isomorphism.

 Let $\tau$ be a point in $F_k$. Then, by the definition of a proper subdivision, the morphism $\psi$ induces a bijection between $M_{F,\tau}^{\vee,\loc}$ and
 the disjoint union of the sets $M_{F,\tau'}^{\vee,\loc}$ where $\tau'$ runs through the set of points in $\psi^{-1}(\tau)$.
  Since $h$ is an \'etale morphism of log schemes, the pullback of $\mathrm{div}_{\loga{X}}(\omega)$ to
$\mcl{X}'$ coincides with $\mathrm{div}_{\loga{(X')}}(\omega)$. Thus if $u$ is an element of $M_{F,\tau'}^{\vee,\loc}$ for some $\tau'$ in $\psi^{-1}(\tau)$,
 then the value $u(h^*_K\omega)$ computed on $\loga{(X')}$ coincides with the value $u(\omega)$ computed on $\loga{X}$. The same is obviously true for $u(e_t)$. Moreover, by Proposition \ref{prop:logblup}, we have
 \begin{equation}\label{eq:torus}
 [\wtl{E}(\tau')^o]=(\LL-1)^{r(\tau)-r(\tau')} [\wtl{E}(\tau)^o]
 \end{equation}
 for every point $\tau'$ in $\psi^{-1}(\tau)$. Thus the right hand side of \eqref{eq:motzeta} does not change if we replace $\loga{X}$ by $\loga{(X')}$.

 As a side remark, we observe that our assumption that $h_K$ is an isomorphism has only been used to ensure that $h_K^*\omega$ is a volume form on $\mcl{X}'_K$, so that the right hand side of \eqref{eq:motzeta} is still well-defined in $\mathcal{M}_{\mcl{X}_k}\llb T \rrb$ if we replace $\loga{X}$ by $\loga{(X')}$. Our proof actually shows that the right hand side of \eqref{eq:motzeta}, viewed as an element of $$\mathcal{M}_{\mcl{X}_k}\llb T \rrb \left[\frac{1}{\LL^i -1} \right]_{i>0},$$ is invariant under
   {\em any} proper subdivision $\psi:F'\to F$.

\medskip
{\em Step 3: compatibility with fs base change.} We will prove that the formula \eqref{eq:motzeta} is compatible with fs base change, in the following sense.
Let $n$ be a positive integer and denote by $F(n)$ the fan of the smooth log scheme $\loga{X}\times^{\fs}_{S^\dagger}S(n)^{\dagger}$ over $S(n)^{\dagger}$. Let $t(n)$ be a uniformizer in $R(n)$.
 Then for every positive integer $i$, the coefficient of $T^i$ in the expression
 $$\sum_{\tau'\in F(n)_k} (\LL-1)^{r(\tau')-1} [\wtl{E}(\tau')^o]
\sum_{u' \in M_{F(n),\tau'}^{\vee,\loc} } \LL^{-u'(\omega(n))}  T^{u'(e_{t(n)})}$$
 is equal to the coefficient of $T^{in}$ in the right hand side of \eqref{eq:motzeta}.

 To see this, we first observe that Lemma \ref{lemm:root} implies that, for every point $\tau$ of $F_k$, the $k$-scheme $\widetilde{E}(\tau)^o$ is isomorphic to the disjoint union
 of the $k$-schemes $\wtl{E}(\tau')^o$ where $\tau'$ runs over the points of $F(n)_k$ that are mapped to $\tau$ under the morphism of fans $F(n)\to F$.
 Moreover, $M_{F(n),\tau'}$ is canonically isomorphic to $$(M_{F,\tau}\oplus_{\N}\frac{1}{n}\N)^{\sat,\sharp},$$ which yields a bijective correspondence
 between the local morphisms $u':M_{F(n),\tau'}\to \N$ such that $u'(e_{t(n)})=i$ and the local morphisms $u:M_{F,\tau}\to \N$ such that $u(e_t)=in$.
  Now it only remains to notice that $u'(\omega(n))=u(\omega)$ because $$\omega_{\loga{X}\times^{\fs}_{S^\dagger}S(n)^{\dagger}/S(n)^{\dagger}}$$ is canonically isomorphic to the pullback of $\omega_{\loga{X}/S^\dagger}$, by the compatibility of relative log differentials with fs base change.

% Hence, in order to prove Theorem \ref{eq:motzeta}, it suffices to show that the coefficient of $T$ in the right hand side of \eqref{eq:motzeta} equals the motivic integral
% $\int_{\mcl{X}}|\omega|$
%in $\mathcal{M}_{\mcl{X}_k}$.

\medskip
{\em Step 4: proof of the formula.} By \cite[4.6.31]{GaRa}, we can find a proper subdivision $\psi\colon F'\to F$ such that, if we denote by $h\colon \loga{(X')}\to \loga{X}$ the associated morphism of log schemes,
the scheme $\mcl{X}$ is regular and the morphism $h_K\colon \mcl{X}'_K\to \mcl{X}_K$ is an isomorphism. Thus, by Step 2, we may assume right away that $\mcl{X}$ itself is regular.
 We write $\Sm(\mcl{X})$ for the $R$-smooth locus of $\mcl{X}$. Then the open immersion $\Sm(\mcl{X})\to \mcl{X}$ is a N\'eron smoothening, by \cite[3.1.2]{BLR} and the subsequent remark. By Step 3 and the definition of the motivic integral, we only need to consider the coefficient of $T$ in the right hand side of \eqref{eq:motzeta}, and prove the equality
 $$\sum_{\tau\in F_k}  [\wtl{E}(\tau)^o] (\LL-1)^{r(\tau)-1}
\sum_{u \in M_{F,\tau}^{\vee,\loc},\,u(e_t)=1 } \LL^{-u(\omega)} =\sum_{C\in \pi_0(\Sm(\mcl{X})_k)}[C]\LL^{-\ord_C\omega}$$
in $\mathcal{M}_{\mcl{X}_k}$.

If $\tau$ is a point in $F_k$, then by Example \ref{exam:torsor}, there exists a local morphism $u:M_{F,\tau}\to \N$ with $u(e_t)=1$ if and only if $\tau$ is contained in
a unique component of $\mcl{X}_k$ and this component has multiplicity $1$ in $\mcl{X}_k$. This is equivalent to the condition that $\tau$ lies in $\Sm(\mcl{X})_k$.
 In that case, the root index of $\tau$ is equal to $1$, so that $\widetilde{E}(\tau)^o=E(\tau)^o$.
By the explicit description of the fan $F$ in Section \ref{sec:fans}, the set  $\Sm(\mcl{X})_k$ is a union of logarithmic strata $E(\tau)^o$.
 Thus for every connected component $C$ of $\Sm(\mcl{X}_k)$,  we can write
 $$[C]=\sum_{\tau\in F\cap C} [E(\tau)^o]$$ in $K_0(\Var_{\mcl{X}_k})$.

  Therefore, it only remains to prove the following property: let $\tau$ be a point in $F\cap \Sm(\mcl{X})_k$ and denote by $C(\tau)$ the unique connected component of $\Sm(\mcl{X})_k$ containing $\tau$. Then we have
\begin{equation}\label{eq:order}
(\LL-1)^{r(\tau)-1}\sum_{u \in M_{F,\tau}^{\vee,\loc},\,u(e_t)=1} \LL^{-u(\omega)}=\LL^{-\ord_{C(\tau)}\omega}
\end{equation}
 in $\mathcal{M}_{k}$ (here we again use Lemma \ref{lemm:cone} to view the left hand side as an element of $\mathcal{M}_{k}$).

 First, we consider the case where the log structure at $\tau$ is vertical (this means that every irreducible component of the boundary divisor $D$ that passes through $\tau$ is contained in the special fiber $\mcl{X}_k$). Then $\tau$ is the generic point of $C(\tau)$, the monoid $M_{F,\tau}$ is isomorphic to $\N$, and $e_t$ is its unique generator.
  Thus the only morphism $u$ contributing to the sum in the left hand side of \eqref{eq:order} is the identity morphism $u:\N\to \N$. Now the equality follows from the fact that
  locally around $\tau$, we have a canonical isomorphism $\omega_{\mcl{X}/S}\cong  \omega_{\loga{X}/S^{\dagger}}$ because the morphism
  $\loga{X}\to S^{\dagger}$ is strict at $\tau$.

 Finally, we generalize the result to the case where the log structure at $\tau$ is not vertical. By Example \ref{exam:torsor}, there exists an isomorphism
 $M_{F,\tau}\to \N\times \N^h$ for some integer $h\geq 0$ such that the morphism $\N\to M_{F,\tau}$ is given by $1\mapsto (1,0)$. In this case,
  restriction to $\N^h$ defines a bijection between the set of local morphisms $u:M_{F,\tau}\to \N$ mapping $e_t$ to $1$ and the set of local morphisms $u':\N^h\to \N$.
 Since $\omega$ is a volume form on $\mcl{X}_K$, we have $u(\omega)=\ord_{C(\tau)}\omega + u'(1,\ldots,1)$. Hence,
\begin{eqnarray*}
(\LL-1)^{r(\tau)-1}\sum_{u \in M_{F,\tau}^{\vee,\loc},\,u(e_t)=1} \LL^{-u(\omega)}&=& \LL^{-\ord_{C(\tau)}\omega}(\LL-1)^{h}\sum_{u' \in (\N^h)^{\vee,\loc}} \LL^{-u'(1,\ldots,1)}
\\ &=&\LL^{-\ord_{C(\tau)}\omega}(\LL-1)^{h}\sum_{i_1,\ldots,i_h>0}\LL^{-(i_1+\ldots +i_h)}
\\&=&\LL^{-\ord_{C(\tau)}\omega}
\end{eqnarray*}
in $\mathcal{M}_k$.
\end{proof}

As a special case of Theorem \ref{thm:main}, we recover a generalization to arbitrary characteristic of the formula for strict normal crossings models from \cite[7.7]{NiSe}. Beware that in \cite{NiSe}, the motivic integrals were renormalized by multiplying them with $\LL^{-d}$, where $d$ is the relative dimension of $\mcl{X}$ over $R$.

\begin{cor}\label{cor:snc}
Let $\mcl{X}$ be a regular flat $R$-scheme of finite type such that $\mcl{X}_k$ is a strict normal crossings divisor,  and write
$$\mcl{X}_k=\sum_{i\in I}N_i E_i.$$
 Denote by $\loga{X}$ the log scheme obtained by endowing $\mcl{X}$ with the divisorial log structure induced by
 $\mcl{X}_k$, and assume that $\loga{X}$ is smooth over $S^{\dagger}$ (this is automatic when $k$ has characteristic zero, by Proposition \ref{prop:regsm}). Let $\omega$ be a volume form on $\mcl{X}_K$.

 For every non-empty subset $J$ of $I$, we set
$$E_J^o=(\bigcap_{j\in J}E_j)\setminus (\bigcup_{i\notin J}E_i)$$ and  $N_J=\gcd\{N_j\,|\,j\in J\}$. We denote by $\widetilde{E}_J^o$ the inverse image of $E_J^o$ in the normalization of $\mcl{X}\times_R R(N_J)$. Let $\nu_i$ be the multiplicity of $E_i$ in the divisor $\mathrm{div}_{\loga{X}}(\omega)$, for every $i$ in $I$. Then the motivic zeta function of $(\mcl{X},\omega)$ is given by
$$Z_{\mcl{X},\omega}(T)=\sum_{\emptyset \neq J\subset I}[\widetilde{E}_J^o](\LL-1)^{|J|-1}\prod_{j\in J}\frac{\LL^{-\nu_j}T^{N_j} }{1-\LL^{-\nu_j}T^{N_j}}$$
in $\mathcal{M}_{\mcl{X}_k}\llb T \rrb $.
\end{cor}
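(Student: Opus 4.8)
The plan is to deduce the statement directly from Theorem \ref{thm:main} by translating every ingredient of formula \eqref{eq:motzeta} into the combinatorics of the strict normal crossings divisor $\mcl{X}_k$. First I would fix the set-up: since $\loga{X}$ carries the divisorial log structure induced by $\mcl{X}_k$, its reduced boundary divisor $D$ is $\sum_{i\in I}E_i$ and the log structure is vertical; in particular the restriction of the log structure over $K$ is trivial, so $\mcl{X}_K$ is smooth over $K$ and Theorem \ref{thm:main} applies, and moreover $F^{\verti}$ consists exactly of the points of $F=F(\loga{X})$ lying on $\mcl{X}_k$. By the description of the log stratification in Section \ref{sec:fans}, these points $\tau$ are the generic points of the irreducible components of the schematic intersections $\bigcap_{j\in J}E_j$, with $J=J(\tau)\subseteq I$ non-empty; one has $r(\tau)=|J|$ and $E(\tau)^o=\overline{\{\tau\}}\setminus\bigcup_{i\notin J}E_i$, so that $\coprod_{J(\tau)=J}E(\tau)^o=E_J^o$. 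By Example \ref{exam:torsor} with $h=0$ there is an isomorphism $M_{F,\tau}\cong\N^J$ identifying $e_t$ with $(N_j)_{j\in J}$ and identifying the standard generators of $M^\sharp_{\loga{X},\tau}$ with the classes of the components $E_j$, $j\in J$.

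Next I would compute $u(e_t)$ and $u(\omega)$ for $u\in M^{\vee,\loc}_{F,\tau}$. Under the isomorphism above, such a $u$ is a tuple $(a_j)_{j\in J}$ of positive integers, so $u(e_t)=\sum_{j\in J}a_jN_j$. Since $\mathrm{div}_{\loga{X}}(\omega)$ is supported on $D$ with multiplicity $\nu_j$ along $E_j$, any $f\in M^{\gp}_{\loga{X},\tau}$ with $\mathrm{div}(f)=\mathrm{div}_{\loga{X}}(\omega)$ locally at $\tau$ has class $\overline{f}=(\nu_j)_{j\in J}$ in $M^{\gp}_{F,\tau}\cong\Z^J$, whence $u(\omega)=u^{\gp}(\overline{f})=\sum_{j\in J}a_j\nu_j$. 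Finally, Example \ref{exam:torsor} gives $\rho(\tau)=\gcd\{N_j\mid j\in J\}=N_J$, so by its construction in Section \ref{sec:stratram} the scheme $\widetilde{E}(\tau)^o$ is the reduced inverse image of $E(\tau)^o$ in the normalization of $\mcl{X}\times_R R(N_J)$; summing over the $\tau$ with $J(\tau)=J$ and using $\coprod E(\tau)^o=E_J^o$ yields $[\widetilde{E}_J^o]=\sum_{J(\tau)=J}[\widetilde{E}(\tau)^o]$ in $\mathcal{M}_{\mcl{X}_k}$.

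It then remains to substitute into \eqref{eq:motzeta} and regroup the terms according to $J$. For fixed non-empty $J$ the inner geometric series factors over the elements of $J$,
$$\sum_{(a_j)_{j\in J}\in\Z_{>0}^J}\LL^{-\sum_j a_j\nu_j}\,T^{\sum_j a_jN_j}=\prod_{j\in J}\sum_{a\geq 1}\LL^{-a\nu_j}T^{aN_j}=\prod_{j\in J}\frac{\LL^{-\nu_j}T^{N_j}}{1-\LL^{-\nu_j}T^{N_j}},$$
which is a genuine element of $\mathcal{M}_k\llb T\rrb$ since each $N_j>0$ (in the present situation $M_{F,\tau}\cong\N^J$ is already simplicial, so this is also the concrete form of the well-definedness asserted in Lemma \ref{lemm:cone}). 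Multiplying by $(\LL-1)^{|J|-1}$ and by $[\widetilde{E}_J^o]=\sum_{J(\tau)=J}[\widetilde{E}(\tau)^o]$, and summing over the non-empty $J\subseteq I$, gives exactly the claimed formula.

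I expect the argument to be essentially formal once Theorem \ref{thm:main} is available; the one point deserving a little care is the identity $[\widetilde{E}_J^o]=\sum_{J(\tau)=J}[\widetilde{E}(\tau)^o]$, i.e.\ checking that the inverse image of $E_J^o$ in the normalization of $\mcl{X}\times_R R(N_J)$ really splits as the disjoint union of the schemes $\widetilde{E}(\tau)^o$. This follows from the definitions in Section \ref{sec:stratram}: the strata $E(\tau)^o$ with $J(\tau)=J$ are pairwise disjoint and cover $E_J^o$, each $\widetilde{E}(\tau)^o$ is obtained by base change along $S(\rho(\tau))^\dagger=S(N_J)^\dagger$, and passing to reduced structures does not affect classes in $\mathcal{M}_{\mcl{X}_k}$.
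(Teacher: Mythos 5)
Your proposal is correct and follows the same route as the paper: the paper's proof is exactly the observation that, via Example \ref{exam:torsor} (identifying $M_{F,\tau}\cong\N^{J}$, $e_t=(N_j)_{j\in J}$, $\rho(\tau)=N_J$ and hence $\widetilde{E}(\tau)^o$ with the cover coming from $R(N_J)$), the corollary is the special case of formula \eqref{eq:motzeta} in Theorem \ref{thm:main}, with the geometric series factoring as the stated product. Your write-up just makes explicit the bookkeeping (grouping strata by $J$, the values $u(e_t)$ and $u(\omega)$, and the simplicial case of Lemma \ref{lemm:cone}) that the paper leaves to the reader.
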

\begin{proof}
 By the explicit description of the logarithmic stratification in Section \ref{sec:fans}, the set $E_J^o$ is the union of the strata
 $E(\tau)^o$ where $\tau$ runs through the intersection $F(\loga{X})\cap E_J^o$.
By Example \ref{exam:torsor}, the scheme $\widetilde{E}(\tau)^o$ is the reduced inverse image of $E(\tau)^o$ in the normalization
 of $\mcl{X}\times_R R(N_J)$. Thus by the scissor relations in the Grothendieck ring, we have
 $$[\widetilde{E}^o_J]=\sum_{\tau\in F(\loga{X})\cap E_J^o}[\widetilde{E}(\tau)^o].$$
 Now the description of the characteristic monoids of $\loga{X}$ in Example \ref{exam:torsor}  shows that the expression for $Z_{\mcl{X},\omega}(T)$ in the statement is a particular case of the formula \eqref{eq:motzeta} in Theorem \ref{thm:main}.
\end{proof}

\subsection{Poles of the motivic zeta function}
Theorem \ref{thm:main} yields interesting information on the poles of the motivic zeta function. Since the localized Grothendieck ring of varieties is not a domain,
 the notion of a pole requires some care; see \cite{RoVe}. To circumvent this issue, we introduce the following definition.

 \begin{defi}\label{def:poles} Let $X$ be a Noetherian $k$-scheme and let
  $Z(T)$ be an element of $\mathcal{M}_X \llb T \rrb$. Let $\mathcal{P}$ be a set of rational numbers. We say that $\mathcal{P}$ is a {\em set of candidate poles} for
  $Z(T)$ if $Z(T)$ belongs to the subring
 $$\mathcal{M}_{X}\left[T,\frac{1}{1-\LL^a T^b} \right]_{(a,b)\in \Z\times \Z_{>0},\,a/b\in \mathcal{P}}$$
 of $\mathcal{M}_{X}\llb T \rrb$.
 %We make the analogous definition when $Z(T)$ is an element of $\widehat{\mathcal{M}}_X \llb T \rrb$.
 \end{defi}
% Beware that, {\em a priori}, this definition depends on whether we view $Z(T)$ as an element of  $\mathcal{M}_X \llb T \rrb$ or $\widehat{\mathcal{M}}_X \llb T \rrb$. It is not %known whether the completion morphism $\mathcal{M}_X\to \widehat{\mathcal{M}}_X$ is injective.
 For any reasonable definition of a pole (in particular, the one in \cite{RoVe}), the set of rational poles is included in every set of candidate poles.

\begin{prop}\label{prop:poles}
Let $\loga{X}$ be a smooth fs log scheme of finite type over $S^\dagger$ such that $\mcl{X}_K$ is smooth over $K$. Let $\omega$ be a volume form on $\mcl{X}_K$.
 Write $\mcl{X}_k=\sum_{i\in I}N_i E_i$ and denote by $\nu_i$ the multiplicity of $E_i$ in $\mathrm{div}_{\loga{X}}(\omega)$, for every $i\in I$.
  Then $$\mathcal{P}(\mcl{X})=\{-\frac{\nu_i}{N_i}\,|\,i\in I\}$$ is a set of candidate poles for $Z_{\mcl{X},\omega}(T)$.
 \end{prop}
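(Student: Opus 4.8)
The plan is to feed the closed formula of Theorem~\ref{thm:main} into Lemma~\ref{lemm:cone}, and then to identify, for each vertical point $\tau$, the rays of the dual cone $M_{F,\tau}^{\vee}$ with the prime boundary components of $\loga{X}$ passing through $\tau$. By Theorem~\ref{thm:main} we have the finite expansion
$$Z_{\mcl{X},\omega}(T)=\sum_{\tau\in F^{\verti}}[\wtl{E}(\tau)^o]\,(\LL-1)^{r(\tau)-1}\sum_{u\in M_{F,\tau}^{\vee,\loc}}\LL^{-u(\omega)}T^{u(e_t)},$$
so it suffices to show that each summand lies in $\mathcal{M}_{\mcl{X}_k}\bigl[T,\frac{1}{1-\LL^{a}T^{b}}\bigr]_{a/b\in\mathcal{P}(\mcl{X})}$. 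Fixing $\tau\in F^{\verti}$ and choosing $f\in M^{\gp}_{\loga{X},\tau}$ with $\mathrm{div}(f)=\mathrm{div}_{\loga{X}}(\omega)$ locally at $\tau$, I would apply Lemma~\ref{lemm:cone} with $M=M_{F,\tau}$, $n=e_t$ and $m=\overline{f}$ (its hypothesis holds exactly as in Step~1 of the proof of Theorem~\ref{thm:main}, because $\omega$ is a volume form). This puts the $\tau$-summand in the subring generated over $\mathcal{M}_{\mcl{X}_k}[T]$ by the elements $\frac{T}{1-\LL^{-u_i(\omega)}T^{u_i(e_t)}}$, where $u_1,\dots,u_s$ are the generators of the one-dimensional faces of $M_{F,\tau}^{\vee}$ and $i$ runs over the indices with $u_i(e_t)>0$. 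So the whole proposition reduces to the claim: for every such $i$ there is a $j\in I$ with $u_i(\omega)/u_i(e_t)=\nu_j/N_j$.

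Proving this claim is the real content, and here is how I would do it. The generators $u_1,\dots,u_s$ of the rays of $M_{F,\tau}^{\vee}$ are the primitive inward normals to the facets of the cone $\sigma_\tau$ generated by $M_{F,\tau}$, and the facets of $\sigma_\tau$ are precisely the faces of $M_{F,\tau}$ of corank one. By the local structure of the fan of a regular log scheme (\cite[\S10.6]{GaRa}, \cite[3.2]{kato}), a corank-one face of $M_{F,\tau}$ corresponds to a rank-one generization of $\tau$ in $F(\loga{X})$, that is, to the generic point $\tau_j$ of one of the prime boundary components $E_j$ through $\tau$; under this correspondence the ray generator $u_i$ dual to the facet is exactly the cospecialization map $M_{F,\tau}\to M_{F,\tau_j}\cong\N$, which on monomials is the valuation $\ord_{E_j}$. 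Granting this, $u_i(e_t)=\ord_{E_j}(t)$ equals the multiplicity $N_j$ of $E_j$ in $\mcl{X}_k$, while $u_i(\omega)=\ord_{E_j}(f)$ equals the multiplicity $\nu_j$ of $E_j$ in $\mathrm{div}_{\loga{X}}(\omega)$. In particular $u_i(e_t)>0$ forces $N_j>0$, so $E_j$ is a component of $\mcl{X}_k$, i.e.\ $j\in I$, and $-u_i(\omega)/u_i(e_t)=-\nu_j/N_j\in\mathcal{P}(\mcl{X})$. Summing over the finitely many $\tau$ and $i$ then yields $Z_{\mcl{X},\omega}(T)\in\mathcal{M}_{\mcl{X}_k}\bigl[T,\frac{1}{1-\LL^{a}T^{b}}\bigr]_{(a,b):\,a/b\in\mathcal{P}(\mcl{X})}$, which is the assertion.

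I expect the main obstacle to be the identification ``rays of $M_{F,\tau}^{\vee}$ $=$ order functions along the boundary components through $\tau$'': it requires unwinding Kato's local structure theorem for regular log schemes and keeping track of the (possibly several) analytic branches of each $E_j$ at $\tau$, so as to check that the multiplicity of $E_j$ in a divisor $\mathrm{div}(f)$ with $f\in M_{\loga{X},\tau}$ is read off consistently from every branch. Everything else is routine bookkeeping. Alternatively one can phrase this step valuation-theoretically: since $\tau$ lies in the fan of $\loga{X}$, the ring $\mathcal{O}_{\mcl{X},\tau}$ is, up to completion, the affine toric variety $\Spec k(\tau)[M_{F,\tau}]$ at its torus-fixed point; each $u_i$ with $u_i(e_t)>0$ is the divisorial valuation attached to a torus-invariant divisor there, which is a branch of some component $E_j\subseteq\mcl{X}_k$, and then $u_i(e_t)=N_j$ and $u_i(\omega)=\nu_j$ as above.
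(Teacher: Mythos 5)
Your proposal is correct and follows essentially the same route as the paper's own proof: expand $Z_{\mcl{X},\omega}(T)$ via Theorem \ref{thm:main}, apply Lemma \ref{lemm:cone} to each vertical $\tau$, and identify the ray generators of $M_{F,\tau}^{\vee}$ with the boundary components through $\tau$, so that $u(e_t)=N_j$ and $u(\omega)=\nu_j$ (with horizontal rays handled by $u(e_t)=0$, $u(\omega)=1$ since $\omega$ is a volume form). The only difference is that you spell out the ray--component correspondence via generizations/divisorial valuations, which the paper simply asserts as canonical.
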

\begin{proof}
Set $F=F(\loga{X})$ and let $\tau$ be a point of $F\cap \mcl{X}_k$. In view of Theorem \ref{thm:main}, it suffices to show that
$\mathcal{P}(\mcl{X})$ is a set of candidate poles for
$$\sum_{u \in M_{F,\tau}^{\vee,\loc} } \LL^{-u(\omega)}  T^{u(e_t)}.$$
 As we have explained in Section \ref{sec:val}, the one-dimensional faces of $M^{\vee}_{F,\tau}$ correspond canonically to the irreducible components of the boundary divisor $D$ of $\loga{X}$ passing through $\tau$.
 If $u$ is a generator of a one-dimensional face and $E$ is the corresponding component of $D$, then $u(\omega)$ and $u(e_t)$ are the multiplicities of $E$ in $\mathrm{div}_{\loga{X}}$ and $\mcl{X}_k$, respectively. In particular, if $D$ is not included in $\mcl{X}_k$, then
 $u(e_t)=0$ and $u(\omega)=1$, because $\omega$ is a volume form on $\mcl{X}_K$.
Thus the result follows from Lemma \ref{lemm:cone}.
\end{proof}

 Proposition \ref{prop:poles} tells us that, in order to find a set of candidate poles of $Z_{\mcl{X},\omega}(T)$, it is not necessary to take a log resolution
 of the pair $(\mcl{X},\mcl{X}_k)$, which would introduce many redundant candiate poles. This observation is particularly useful in the context of the monodromy conjecture for motivic zeta functions; see Section \ref{sec:DL}.

\section{Generalizations}
\subsection{Formal schemes}\label{sec-formal}
 The definition of the motivic zeta function (Definition \ref{def:motzeta}) can be generalized to the case where $\mcl{X}$ is a formal scheme
 satisfying a suitable finiteness condition (a so-called {\em special} formal scheme in the sense of \cite{berk}, which is also called a formal scheme formally of finite type in the literature). This generalization is carried out in \cite{Ni}, and it is not difficult to extend our formula from Theorem \ref{thm:main} to this setting. The main reason why we have chosen to work in the category of schemes in this article is the lack of suitable references for the basic properties of logarithmic formal schemes on which the proof of our formula relies. However, the proofs for log schemes carry over easily to the formal case, so that the reader who would want to apply Theorem \ref{thm:main} to formal schemes should have no difficulties in making the necessary verifications.

\subsection{Nisnevich  log structures}\label{sec:etale}
 We will now show how Theorem \ref{thm:main} can be adapted to log schemes in the Nisnevich topology. This allows us to compute motivic zeta functions on a larger class of models with components with ``mild'' self-intersections in the special fiber. This generality is needed, for instance, for the applications to motivic zeta functions of
 Calabi-Yau varieties in \cite{HaNi-CY}.
  We will explain in Example \ref{ex:etale} what is the advantage of the Nisnevich topology over the \'etale topology when computing motivic zeta functions.

 Let $Y$ be a Noetherian scheme. A family of morphisms of schemes
 $$\{u_{\alpha}\colon Y_{\alpha}\to Y\,|\,\alpha\in A\}$$
 is called a Nisnevich cover if each morphism is \'etale and, for every point $y$ in $Y$, there exist an element $\alpha$ in $A$ and a point $y_{\alpha}$ in $Y_{\alpha}$ such that
$u_{\alpha}(y_{\alpha})=y$ and the induced morphism of residue fields $\kappa(y)\to \kappa(y_{\alpha})$ is an isomorphism. By taking for $y$ a generic point of $Y$ and applying Noetherian induction, the definition implies that there exists a finite partition of $Y$ into reduced subschemes $Z$ with the property that there exist an index $\alpha$ in $A$ and a subscheme $Z_{\alpha}$ of $Y_{\alpha}$ such that the restriction of $u_{\alpha}$ to $Z_{\alpha}$ is an isomorphism onto $Z$.

These covering families generate a Grothendieck topology, which is called the Nisnevich topology. Let $M_Y\to \mathcal{O}_Y$ be an fs log structure on $Y$ with respect to the \'etale topology (as in \cite{kato-log}).
 We say that the log structure $M_Y$ is Nisnevich if we can find charts for the log structure $M_Y$ locally in the Nisnevich topology on $Y$.

 Let $\loga{X}$ be a smooth fs Nisnevich log scheme of finite type over $S^\dagger$.  Then the sheaf of monoids $M^{\sharp}_{\loga{X}}$ is constructible on the Nisnevich site of $\mcl{X}$, by the same proof as in \cite[10.2.21]{GaRa}.
 We choose a partition $\mathscr{P}$ of $\mcl{X}_k$ into irreducible locally closed subsets $U$ such that the restriction of $M^{\sharp}_{\loga{X}}$ to the Nisnevich site on $U$ is constant. We denote by $P$ the set consisting of the generic points of all the strata $U$ in $\mathscr{P}$.
  For every point $\tau$ in $P$ we will write $E(\tau)^o$ for the unique stratum in $\mathscr{P}$ containing $\tau$, and we denote by $r(\tau)$ the dimension of the monoid $M^{\sharp}_{\loga{X},\tau}$.
    We define the root index $\rho(\tau)$ and the scheme $\widetilde{E}(\tau)^o$ in exactly the same way as before, and we write $e_t$ for the image
  of $t$ in the monoid of global sections of  $M^{\sharp}_{\loga{X}}$. If $\mcl{X}_K$ is smooth over $K$ and $\omega$ is a volume form
  on $\mcl{X}_K$, then we can also simply copy the definition of the value $u(\omega)$ for every local morphism $u:M^{\sharp}_{\loga{X},\tau}\to \N$.

 \begin{thm}\label{thm:etale}
Let $\loga{X}$ be a smooth fs Nisnevich log scheme of finite type over $S^\dagger$. We assume that the generic fiber $\mcl{X}_K$ is smooth over $K$. Let $\omega$ be a volume form on $\mcl{X}_K$.
 Then the motivic zeta function of $(\mcl{X},\omega)$ is given by
\begin{equation}\label{eq:motzeta-etale}
Z_{\mcl{X},\omega}(T)=\sum_{\substack{\tau\in P}} (\LL-1)^{r(\tau)-1} [\wtl{E}(\tau)^o]
\sum_{u \in (M^{\sharp}_{\loga{X},\tau})^{\vee,\loc} } \LL^{-u(\omega)}  T^{u(e_t)} \end{equation}
in $\mathcal{M}_{\mcl{X}_k}\llb T \rrb$.
%If the log structure on $\loga{X}$ is vertical, then this formula holds already in $\mathcal{M}_{\mcl{X}_k}\llb T \rrb $.
\end{thm}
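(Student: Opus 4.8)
The plan is to run the four-step argument in the proof of Theorem \ref{thm:main} essentially verbatim, with ``Zariski-local'' systematically replaced by ``Nisnevich-local''; the only genuine work is to check that the foundational inputs survive this replacement. Concretely, one first records that all the constructions of Sections \ref{sec:fans} and \ref{sec:stratram} have Nisnevich counterparts with the same proofs: the Kato fan $F=F(\loga{X})$ (now a Nisnevich monoidal space, or, Nisnevich-locally, the usual Kato fan), the log strata $E(\tau)^o$ and their ranks $r(\tau)$, the root index $\rho(\tau)$, the schemes $\wtl{E}(\tau)^o$, the fact that an fs proper subdivision of $F$ induces a log modification $h\colon\loga{(X')}\to\loga{X}$, and the statements of Lemma \ref{lemm:root} and Proposition \ref{prop:logblup}. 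Each of these is a local assertion whose verification reduces to a computation over a (strict) henselian local ring or --- in the case of Lemma \ref{lemm:cone} and Proposition \ref{prop:root} --- to a purely monoid-theoretic statement insensitive to the topology; as the text already notes, $M^\sharp_{\loga{X}}$ is Nisnevich-constructible by the argument of \cite[10.2.21]{GaRa}, and the structure theory of regular log schemes, Kato fans and log modifications that we import from \cite{kato} and \cite{GaRa} is either developed \'etale-locally to begin with or descends along the automatically split Nisnevich covers. The single conceptual gain over Theorem \ref{thm:main} is that a normal crossings divisor all of whose self-intersection branches are rational over the residue field becomes log smooth for the Nisnevich topology even when it fails to be log smooth for the Zariski topology; this is what Example \ref{ex:etale} will illustrate.

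With these preliminaries, \emph{Step 1} (well-definedness of the geometric series in \eqref{eq:motzeta-etale}) is immediate from Lemma \ref{lemm:cone}, since $\omega$ being a volume form on $\mcl{X}_K$ forces $u(\omega)=1$ on every generator $u$ of a one-dimensional face of $M^\sharp_{\loga{X},\tau}$ with $u(e_t)=0$. For \emph{Step 2} (invariance under a log modification $h\colon\loga{(X')}\to\loga{X}$ coming from an fs proper subdivision $\psi\colon F'\to F$ that is an isomorphism over $\mcl{X}_K$), the subdivision $\psi$ matches $(M^\sharp_{\loga{X},\tau})^{\vee,\loc}$ with the disjoint union of the $(M^\sharp_{\loga{(X')},\tau'})^{\vee,\loc}$ over $\tau'\in\psi^{-1}(\tau)$, the values $u(\omega)$ and $u(e_t)$ are unchanged because $h$ is an \'etale morphism of log schemes, and the torus-bundle identity $[\wtl{E}(\tau')^o]=(\LL-1)^{r(\tau)-r(\tau')}[\wtl{E}(\tau)^o]$ is the Nisnevich form of Proposition \ref{prop:logblup}; using a Nisnevich analogue of \cite[4.6.31]{GaRa} we may then reduce, exactly as in Theorem \ref{thm:main}, to the case where $\mcl{X}$ is regular, which here means $M^\sharp_{\loga{X},\tau}\cong\N^{r(\tau)}$ for every $\tau\in P$, i.e.\ $\mcl{X}_k$ is a Nisnevich normal crossings divisor. \emph{Step 3} (compatibility with fs base change) is likewise unchanged: setting $\loga{X}(n)=\loga{X}\times^{\fs}_{S^\dagger}S(n)^{\dagger}$, the Nisnevich form of Lemma \ref{lemm:root} identifies $\wtl{E}(\tau)^o$ with the disjoint union of the $\wtl{E}(\tau')^o$ for $\tau'$ above $\tau$, the stalk $M^\sharp_{\loga{X}(n),\tau'}$ is canonically $(M^\sharp_{\loga{X},\tau}\oplus_{\N}\frac{1}{n}\N)^{\sat,\sharp}$, and $u'(\omega(n))=u(\omega)$ by compatibility of relative log differentials with fs base change; this reduces the theorem to computing the coefficient of $T$.

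\emph{Step 4.} After Steps 2 and 3 we assume $\mcl{X}$ regular in the above sense, so that $\Sm(\mcl{X})\hookrightarrow\mcl{X}$ is a N\'eron smoothening by \cite[3.1.2]{BLR} and the remark following it --- an assertion about the underlying scheme, independent of the topology. It therefore suffices to show that the coefficient of $T$ in the right-hand side of \eqref{eq:motzeta-etale} equals $\sum_{C\in\pi_0(\Sm(\mcl{X})_k)}[C]\LL^{-\ord_C\omega}$. A point $\tau\in P$ lies in $\Sm(\mcl{X})_k$ precisely when there is a local morphism $u\colon M^\sharp_{\loga{X},\tau}\to\N$ with $u(e_t)=1$, in which case $\rho(\tau)=1$ and $\wtl{E}(\tau)^o=E(\tau)^o$; the strata $E(\tau)^o$ with $\tau\in P\cap\Sm(\mcl{X})_k$ partition $\Sm(\mcl{X})_k$, and each lies in a single connected component $C(\tau)$. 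The identity \eqref{eq:order} then holds with exactly the computation given for Theorem \ref{thm:main}: it takes place in the henselian stalk at $\tau$, where $\loga{X}\to S^\dagger$ admits a chart, and both $\ord_C\omega$ and the comparison of $\omega_{\mcl{X}/S}$ with $\omega_{\loga{X}/S^\dagger}$ are unaffected by passage to the henselization. Summing the contributions $[E(\tau)^o]\LL^{-\ord_{C(\tau)}\omega}$ over $\tau\in P\cap C$ for a fixed $C$, using additivity in $\mathcal{M}_{\mcl{X}_k}$ together with the elementary identity $(\LL-1)^h\sum_{i_1,\dots,i_h>0}\LL^{-(i_1+\cdots+i_h)}=1$, recovers $[C]\LL^{-\ord_C\omega}$, which finishes the proof.

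The main obstacle lies entirely in the preliminary step: one must be sure that each result imported from \cite{kato} and \cite{GaRa} --- the local structure theory of regular log schemes, Kato fans, log modifications, exactness of $M\to M(d)^{\sat}$, and so on --- has a valid Nisnevich counterpart. None of these is genuinely hard, since they are all local and either already hold \'etale-locally or follow by descent along split Nisnevich covers, but they must be stated explicitly. A secondary, bookkeeping-type point is to check that the right-hand side of \eqref{eq:motzeta-etale} is independent of the chosen partition $\mathscr{P}$: this is clear because each stratum contributes only through its generic point, and refining $\mathscr{P}$ merely subdivides a stratum $E(\tau)^o$ into finitely many pieces on which $M^\sharp_{\loga{X}}$, and hence every invariant occurring in the formula, remains constant.
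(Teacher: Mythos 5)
Your outline diverges from the paper's actual argument, and the place where you wave your hands is precisely where the real difficulty sits. The proof of Theorem \ref{thm:main} is not a collection of stalk-local verifications: its Steps 2 and 4 rest on two \emph{global} constructions for Zariski log structures, namely the existence of the log modification $\loga{(X')}\to\loga{X}$ attached to a proper subdivision of the Kato fan (\cite[9.9]{kato}) and the existence of a proper subdivision making the underlying scheme regular (\cite[4.6.31]{GaRa}). Both are statements about the Kato fan as a global monoidal space sitting inside $\mcl{X}$, and that object is a Zariski-topology notion; for a Nisnevich log structure the boundary components may have several branches at a point (see the proof of Corollary \ref{cor:etpoles} and Example \ref{ex:etale} after base change), so the ``fan'' does not embed in $\mcl{X}$ in the same way, and a subdivision/log modification cannot be produced by ``a computation over a henselian local ring.'' One would either have to develop a theory of fans and subdivisions for non-Zariski log structures, or glue local subdivisions chosen compatibly along a Nisnevich cover and prove descent of the resulting modifications --- neither of which your proposal supplies, and neither of which follows by ``the same proofs'' from the cited references. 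The same objection applies to your Nisnevich versions of Lemma \ref{lemm:root} and Proposition \ref{prop:logblup} insofar as they are invoked for these global modifications. So as written there is a genuine gap, not a routine bookkeeping step.

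The paper avoids all of this with a short reduction that you missed: the motivic zeta function is local for the Nisnevich topology (immediate from the definition of the motivic integral via N\'eron smoothenings), the right-hand side of \eqref{eq:motzeta-etale} is additive over a finite partition of $\mcl{X}_k$ and compatible with restriction along \'etale maps that are isomorphisms over a stratum, and the log structure becomes Zariski on a suitable Nisnevich cover of $\mcl{X}$. Choosing such a cover and a compatible partition of $\mcl{X}_k$, one computes each piece by Theorem \ref{thm:main} and sums; no Nisnevich fan theory, no Nisnevich log modifications, and no Nisnevich resolution statement are ever needed. If you want to keep the spirit of your outline, the correct move is to replace your ``preliminary step'' by this localization argument; your Steps 1--4 then become superfluous, since the Zariski case already contains them.
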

\begin{proof}
One can reduce to the Zariski case by observing that the motivic zeta function $Z_{\mcl{X},\omega}(T)$ is local with respect to the Nisnevich topology, in the following sense:
 if $h:\mcl{U}\to \mcl{X}$ is an \'etale morphism of finite type and $Y$ is a subscheme of $\mcl{X}_k$ such that $Y'=\mcl{U}\times_{\mcl{X}}Y\to Y$ is an isomorphism, then
 $Z_{\mcl{X},\omega}(T)$ and $Z_{\mcl{U},h^*_K\omega}(T)$ have the same image under the base change morphisms $\mathcal{M}_{\mcl{U}_k}\llb T \rrb\to \mathcal{M}_{Y}\llb T \rrb$ and $\mathcal{M}_{\mcl{X}_k}\llb T \rrb\to \mathcal{M}_{Y}\llb T \rrb$, respectively. This is an immediate consequence of the definition of the motivic integral.
  Moreover, if $\{Y_1,\ldots,Y_r\}$ is a finite partition of $\mcl{X}_k$ into subschemes and we denote by $Z_i(T)$ the image of  $Z_{\mcl{X},\omega}(T)$ under the composition
 $$\mathcal{M}_{\mcl{X}_k}\llb T \rrb\to \mathcal{M}_{Y_i}\llb T \rrb\to \mathcal{M}_{\mcl{X}_k}\llb T \rrb$$ (base change followed by the forgetful morphism), then
 $$Z_{\mcl{X},\omega}(T)=Z_1(T)+\ldots+Z_r(T).$$ Thus we can compute $Z_{\mcl{X},\omega}(T)$ on a Nisnevich cover of $\mcl{X}$ where the log structure becomes Zariski in the sense of \cite[2.1.1]{niziol}.
 Since the right hand side of \eqref{eq:motzeta-etale} satisfies the analogous localization property with respect to the Zariski topology, the result now follows from the Zariski case that was proven in Theorem \ref{thm:main}.
\end{proof}

\begin{cor}\label{cor:etpoles}
Let $\loga{X}$ be a smooth fs log scheme of finite type over $S^\dagger$ (with respect to the Nisnevich topology) such that $\mcl{X}_K$ is smooth over $K$.
  Let $\omega$ be a volume form on $\mcl{X}_K$.
 Write $\mcl{X}_k=\sum_{i\in I}N_i E_i$ and denote by $\nu_i$ the multiplicity of $E_i$ in $\mathrm{div}_{\loga{X}}(\omega)$, for every $i\in I$.
  Then $$\mathcal{P}(\mcl{X})=\{-\frac{\nu_i}{N_i}\,|\,i\in I\}$$ is a set of candidate poles for $Z_{\mcl{X},\omega}(T)$.
 \end{cor}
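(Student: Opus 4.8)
The plan is to reduce this to the Zariski statement already established in Proposition \ref{prop:poles}, using the same Nisnevich-descent mechanism that was used in the proof of Theorem \ref{thm:etale}. First I would recall that, by the localization property spelled out in the proof of Theorem \ref{thm:etale}, the motivic zeta function $Z_{\mcl{X},\omega}(T)$ can be computed on a Nisnevich cover $\{h_j\colon \mcl{U}_j\to \mcl{X}\}$ of $\mcl{X}$ on which the log structure becomes Zariski: there is a finite partition $\{Y_1,\dots,Y_r\}$ of $\mcl{X}_k$ together with étale morphisms $\mcl{U}_j\to \mcl{X}$ inducing isomorphisms $\mcl{U}_j\times_{\mcl{X}}Y_j\xrightarrow{\sim} Y_j$, and $Z_{\mcl{X},\omega}(T)$ is the sum of the images of the $Z_{\mcl{U}_j,h_j^*\omega}(T)$ under the compositions $\mathcal{M}_{(\mcl{U}_j)_k}\llb T\rrb \to \mathcal{M}_{Y_j}\llb T\rrb \to \mathcal{M}_{\mcl{X}_k}\llb T\rrb$ of base change followed by the forgetful morphism.

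Next I would observe that forming these compositions is a ring homomorphism that sends $\LL$ to $\LL$ and $T$ to $T$, hence carries the subring
$$\mathcal{M}_{(\mcl{U}_j)_k}\left[T,\frac{1}{1-\LL^a T^b}\right]_{a/b\in \mathcal{P}(\mcl{U}_j)}$$
into $\mathcal{M}_{\mcl{X}_k}\left[T,\frac{1}{1-\LL^a T^b}\right]_{a/b\in \mathcal{P}(\mcl{U}_j)}$. Now $\mcl{U}_j$ is a smooth log scheme over $S^\dagger$ with respect to the \emph{Zariski} topology (on a Nisnevich cover chosen so that the log structure is Zariski), and $(\mcl{U}_j)_K$ is smooth over $K$, so Proposition \ref{prop:poles} applies: $\mathcal{P}(\mcl{U}_j)$ is a set of candidate poles for $Z_{\mcl{U}_j,h_j^*\omega}(T)$. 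Therefore the image of $Z_{\mcl{U}_j,h_j^*\omega}(T)$ in $\mathcal{M}_{\mcl{X}_k}\llb T\rrb$ lies in the subring corresponding to $\mathcal{P}(\mcl{U}_j)$, and since a finite sum of such elements lies in the subring corresponding to the union of the index sets, $Z_{\mcl{X},\omega}(T)$ lies in the subring corresponding to $\bigcup_j \mathcal{P}(\mcl{U}_j)$.

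The last step, and the only point requiring a small argument, is to check that $\bigcup_j \mathcal{P}(\mcl{U}_j)\subseteq \mathcal{P}(\mcl{X})$, so that the larger set $\mathcal{P}(\mcl{X})$ is also a set of candidate poles. This is where one uses that an étale morphism $h_j\colon\mcl{U}_j\to\mcl{X}$ pulls back the special fiber and the divisor $\mathrm{div}_{\loga{X}}(\omega)$ compatibly: the prime components of $(\mcl{U}_j)_k$ map to prime components $E_i$ of $\mcl{X}_k$ with the same multiplicity $N_i$, and likewise the multiplicity of such a component in $\mathrm{div}_{\loga{(\mcl{U}_j)}}(h_j^*\omega)$ equals the multiplicity $\nu_i$ of $E_i$ in $\mathrm{div}_{\loga{X}}(\omega)$, because relative log canonical bundles are compatible with étale (indeed, smooth) base change. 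Hence every element of $\mathcal{P}(\mcl{U}_j)$ has the form $-\nu_i/N_i$ for some $i\in I$, i.e.\ lies in $\mathcal{P}(\mcl{X})$. I expect this bookkeeping about multiplicities under étale pullback to be the only genuine content beyond invoking Theorem \ref{thm:etale} and Proposition \ref{prop:poles}; everything else is formal manipulation of the relevant subrings of $\mathcal{M}_{\mcl{X}_k}\llb T\rrb$.
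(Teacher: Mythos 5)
Your argument is correct, but it takes a different route from the paper's. The paper does not descend the pole statement through the Nisnevich cover: it applies the explicit formula of Theorem \ref{thm:etale} directly and repeats the argument of Proposition \ref{prop:poles} stratum by stratum, the only new point being that the generators $u$ of the one-dimensional faces of $(M^{\sharp}_{\loga{X},\tau})^{\vee}$ no longer correspond bijectively to the components of the boundary through $\tau$ (a component may have several formal branches at $\tau$, each contributing a ray), but each such $u$ still has $u(e_t)=N_i$ and $u(\omega)=\nu_i$ for some component $E_i$, which is all Lemma \ref{lemm:cone} needs. You instead use the localization mechanism from the proof of Theorem \ref{thm:etale} to write $Z_{\mcl{X},\omega}(T)$ as a sum of images of the zeta functions of strict \'etale charts $\mcl{U}_j$ on which the log structure is Zariski, invoke Proposition \ref{prop:poles} there, and then check that \'etale pullback preserves the multiplicities $N_i$ and $\nu_i$ (the latter because $\omega_{\loga{(\mcl{U}_j)}/S^\dagger}$ is the pullback of $\omega_{\loga{X}/S^\dagger}$), so that $\mathcal{P}(\mcl{U}_j)\subseteq\mathcal{P}(\mcl{X})$. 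In effect the formal branches that the paper handles combinatorially get separated on your \'etale charts, so the two proofs do the same bookkeeping in different places; your version has the small advantage of not needing Theorem \ref{thm:etale} as a black box, at the cost of redoing the descent argument. One small inaccuracy: the composition of base change with the forgetful morphism $\mathcal{M}_{Y_j}\llb T\rrb\to\mathcal{M}_{\mcl{X}_k}\llb T\rrb$ is not a ring homomorphism (the forgetful map does not respect products, since fiber products over $Y_j$ and over $\mcl{X}_k$ differ). What saves your step is that it is $\mathcal{M}_{\mcl{X}_k}$-linear and commutes with multiplication by $\LL$ and with $T$, so any element of $\mathcal{M}_{Y_j}\bigl[T,\tfrac{1}{1-\LL^aT^b}\bigr]$, written as a finite $\mathcal{M}_{Y_j}$-linear combination of series with coefficients in $\Z[\LL]$, is sent into $\mathcal{M}_{\mcl{X}_k}\bigl[T,\tfrac{1}{1-\LL^aT^b}\bigr]$; with that phrasing your proof is complete.
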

 \begin{proof}
The proof is almost identical to the proof of the Zariski case (Proposition \ref{prop:poles}), using the formula in Theorem \ref{thm:etale} instead of Theorem \ref{thm:main}.
 We no longer have a bijective correspondence
 between the generators $u$ of one-dimensional faces of $(M^{\sharp}_{\loga{X},\tau})^{\vee}$ and the irreducible components of the boundary $D$ containing $\tau$, in general,
  because one component may have multiple formal branches at $\tau$ and each of these will give rise to a one-dimensional face of $(M^{\sharp}_{\loga{X},\tau})^{\vee}$.
  However, it remains true that for every generator $u$ of a one-dimensional face of $(M^{\sharp}_{\loga{X},\tau})^{\vee}$, there exists an irreducible component $E$ of $D$ such that $u(e_t)$ equals the multiplicity of $D$ in $\mcl{X}_k$ and $u(\omega)$  equals the multiplicity of $E$ in $\mathrm{div}_{\loga{X}}(\omega)$. This is sufficient to prove the result.
\end{proof}

The following example shows that the formula in Theorem \ref{thm:etale} may fail if we replace the Nisnevich topology by the \'etale topology.

\begin{eg}\label{ex:etale}
Let $R=\R\llb t\rrb$ and set $$\mcl{X}=\Spec R[x,y]/(x^2+y^2-t).$$ We denote by $\loga{X}$ the \'etale log scheme we get by endowing $\mcl{X}$ with the divisorial log structure induced by $\mcl{X}_{\R}$. Then $\loga{X}$ is smooth over $S^+$, since the base change to $R'=\C\llb t \rrb$ is isomorphic to $\Spec R'[u,v]/(uv-t)$.
 However, the log structure is not Nisnevich (that is, the \'etale sheaf $M_{\loga{X}}$ is not the pullback of a sheaf in the Nisnevich topology).

 The \'etale sheaf $M^{\sharp}_{\loga{X}}$ is locally constant on the complement of the origin $O$ of $\mcl{X}_\R$, with geometric stalk $\N$. The geometric stalk
 of $M^{\sharp}_{\loga{X}}$ at $O$ is isomorphic to $\N^2$.
  The line bundle $\omega_{\loga{X}/S^{\dagger}}$ is trivial on $\mcl{X}$ with generator $$\omega=\frac{1}{t}(ydx-xdy).$$
  Blowing up $\mcl{X}$ at $O$, we obtain a regular $R$-scheme whose special fiber has strict normal crossings. Using Corollary \ref{cor:snc}, one computes that the image of $Z_{\mcl{X},\omega}(T)$ under the forgetful morphism $\mathcal{M}_{\mcl{X}_\R}\llb T \rrb\to \mathcal{M}_{\R}\llb T \rrb$
     is equal to $$([C]-[\Spec \C])\frac{ T^2}{1- T^2 }+[\Spec \C](\LL-1)\frac{T}{1-T} +[\Spec \C](\LL-1)\frac{ T^3 }{(1-T)(1-T^2)},$$
 where $C$ is a geometrically connected smooth projective rational curve over $\R$ without rational point.

 The right hand side of \eqref{eq:motzeta-etale} equals
$$[\Spec \C](\LL-1)\frac{T}{1-T}+(\LL-1)\frac{T^2}{(1-T)^2},$$
 which does not agree with our expression for  $Z_{\mcl{X},\omega}(T)$.
 Indeed,
$$[C]-[\Spec \C]\neq \LL-1$$ in $\mathcal{M}_{\R}$, as can be seen by applying the \'etale realization morphism
$$\mathcal{M}_{\R}\to K_0(\Q_\ell[\Z/2\Z]):[X]\mapsto \sum_{i\geq 0}(-1)^i [H^i_{\acute{e}t,c}(X\times_{\R}\C,\Q_\ell)]$$
for any prime $\ell$.

Similar examples can be constructed when $k$ is algebraically closed, for instance by considering
$$\mathcal{X}=\Spec R[x,y,z,z^{-1}]/(x^2+zy^2-t)$$ with $R=k\llb t\rrb$.
\end{eg}

\section{The monodromy action}\label{sec:monodromy}
%\subsection{Monodromy action on the motivic zeta function}
\subsection{Equivariant Grothendieck rings}
 Let $X$ be a Noetherian scheme and let $G$ be a finite group scheme over $\Z$ that acts on $X$; unless explicitly stated otherwise,
 we will always assume that group schemes act on schemes from the left. Suppose that the action of $G$ on $X$ is {\em good}, which means that $X$ can be covered by $G$-stable affine open subschemes. Then the Grothendieck group $K^G_0(\Var_X)$ of $X$-schemes with $G$-action is the abelian group defined by the following presentation:
 \begin{itemize}
\item {\em Generators:} Isomorphism classes $[Y]$ of $X$-schemes $Y$ of finite type endowed with a good action of $G$ such that the morphism $Y\to X$ is $G$-equivariant; here the isomorphism class is taken with respect to $G$-equivariant isomorphisms.
\item{\em Relations:}
\begin{enumerate}
\item If $Y$ is an $X$-scheme of finite type with good $G$-action and $Z$ is a closed subscheme of $Y$ that is stable under the $G$-action, then
$$[Y]=[Z]+[Y\setminus Z].$$
\item If $Y$ is an $X$-scheme of finite type with good $G$-action and $A\to Y$ is an affine bundle of rank $r$ endowed with an affine lift of the $G$-action on $Y$, then
$$[A]=[\A^r_{\Z}\times_{\Z} Y]$$ where $G$ acts trivially on $\A^r_{\Z}$.
\end{enumerate}
\end{itemize}
We define a ring structure on $K^G_0(\Var_X)$ by means of the multiplication rule $[Y]\cdot [Y']=[Y\times_X Y']$ where $G$ acts diagonally on $Y\times_X Y'$. We write $\LL$ for the class $[\A^1_{\Z}\times_{\Z} X]$ and we set $\mathcal{M}^G_X=K^G_0(\Var_X)[\LL^{-1}]$.

 We will use this definition in the case where $G=\mu_n$, the group scheme of $n$-th roots of unity, for some positive integer $n$.
  If $m$ is a positive multiple of $n$, then the $(m/n)$-th power map $\mu_m\to \mu_n$ induces a ring morphism $\mathcal{M}^{\mu_n}_X\to \mathcal{M}^{\mu_m}_X$.
  We denote by $\widehat{\mu}$ the profinite group scheme of roots of unity and we set
  $$\mathcal{M}^{\widehat{\mu}}_X=\lim_{\stackrel{\longrightarrow}{n>0}}\mathcal{M}^{\mu_n}_{X}$$ where the positive integers $n$ are ordered by the divisibility relation.
   An action of $\widehat{\mu}$ on a Noetherian scheme is called {\em good} if it factors through a good action of $\mu_n$ for some $n>0$. We will need the following elementary result.

   \begin{prop}\label{prop:eqtor}
   Let $Y\to X$ be an equivariant morphism of Noetherian schemes with a good $\mu_n$-action, for some $n>0$. Assume that $Y$ is a $\mathbb{G}^r_{m,\Z}$-torsor
   over $X$  and that the action $$\mathbb{G}^r_{m,\Z}\times_{\Z}Y\to Y$$ is $\mu_n$-equivariant, where $\mu_n$ acts trivially on $\mathbb{G}^r_{m,\Z}$. Then we have $$[Y]=[X](\LL-1)^r$$ in $K^{\mu_n}_0(\Var_X)$.
   \end{prop}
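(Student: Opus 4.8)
The plan is to realize $Y$ as the complement of a union of linear subbundles inside an ambient vector bundle, and then to invoke the second defining relation of the equivariant Grothendieck ring, which makes $\mu_n$-equivariant vector bundles behave like trivial ones. This route avoids any appeal to the triviality or non-triviality of line bundles on $X$.

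First, since $[X]$ is the unit of $K_0^{\mu_n}(\Var_X)$, the claim amounts to the equality $[Y] = (\mathbb{L}-1)^r$. I would form the associated rank-$r$ vector bundle $E = Y \times^{\mathbb{G}_{m,\mathbb{Z}}^r} \mathbb{A}^r_{\mathbb{Z}}$, where $\mathbb{G}_{m,\mathbb{Z}}^r$ acts coordinatewise on $\mathbb{A}^r_{\mathbb{Z}}$. Since the $\mathbb{G}_{m,\mathbb{Z}}^r$-action on $Y$ is $\mu_n$-equivariant and $\mu_n$ acts trivially on $\mathbb{A}^r_{\mathbb{Z}}$, the $\mu_n$- and $\mathbb{G}_{m,\mathbb{Z}}^r$-actions on $Y \times_{\mathbb{Z}} \mathbb{A}^r_{\mathbb{Z}}$ commute, so $\mu_n$ descends to $E$, making $E \to X$ a $\mu_n$-equivariant vector bundle on which $\mu_n$ acts trivially along the fibre coordinates, hence linearly on the fibres. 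Decomposing $E = L_1 \oplus \cdots \oplus L_r$ according to the $r$ factors of $\mathbb{G}_{m,\mathbb{Z}}^r$, every coordinate subbundle $D_S = \bigoplus_{j \notin S} L_j$ (for $S \subseteq \{1,\ldots,r\}$) is a $\mu_n$-stable closed subscheme of $E$, and $Y = E \setminus \bigcup_{i=1}^r D_{\{i\}}$.

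Next, iterating the scissor relation over the subschemes $D_S$ yields the inclusion-exclusion identity
$$[Y] = \sum_{S \subseteq \{1,\ldots,r\}} (-1)^{|S|} [D_S]$$
in $K_0^{\mu_n}(\Var_X)$. Each $D_S$ is a $\mu_n$-equivariant vector bundle over $X$ of rank $r - |S|$ with affine $\mu_n$-action on its fibres, so the second defining relation of $K_0^{\mu_n}(\Var_X)$ gives $[D_S] = [\mathbb{A}^{r-|S|}_{\mathbb{Z}} \times_{\mathbb{Z}} X] = \mathbb{L}^{r-|S|}$. Grouping the summands by $k = |S|$ then gives
$$[Y] = \sum_{k=0}^{r} (-1)^k \binom{r}{k} \mathbb{L}^{r-k} = (\mathbb{L}-1)^r = [X](\mathbb{L}-1)^r.$$

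I do not expect a genuine obstacle here; the only point that needs care is the equivariance bookkeeping, namely checking that the contracted-product construction, the splitting $E = L_1 \oplus \cdots \oplus L_r$, and the subbundles $D_S$ are all compatible with the $\mu_n$-action, and that $\mu_n$ acts affinely (indeed linearly, and trivially in suitable local frames) on the fibres of each $D_S$ so that the second relation of the Grothendieck ring legitimately applies. All of this is forced by the hypothesis that $\mu_n$ acts trivially on $\mathbb{G}_{m,\mathbb{Z}}^r$, and hence may be taken to act trivially on $\mathbb{A}^r_{\mathbb{Z}}$.
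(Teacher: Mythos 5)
Your argument is correct, and in substance it follows the same strategy as the paper's proof: reduce the $\mu_n$-equivariant $\mathbb{G}^r_{m,\Z}$-torsor to $\mu_n$-equivariant line bundles and then apply the two defining relations of $K_0^{\mu_n}(\Var_X)$. The difference is only in the bookkeeping: the paper decomposes $Y$ directly as a fibered product $\mathcal{L}_1^{\ast}\times_X\cdots\times_X\mathcal{L}_r^{\ast}$ of punctured equivariant line bundles and multiplies the classes $[\mathcal{L}_i^{\ast}]=(\LL-1)[X]$ in the ring, whereas you embed $Y$ as the complement of the coordinate subbundles $D_{\{i\}}$ in the associated bundle $E=L_1\oplus\cdots\oplus L_r$ and run inclusion--exclusion, which gives the same binomial identity $\sum_k(-1)^k\binom{r}{k}\LL^{r-k}=(\LL-1)^r$. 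A modest bonus of your write-up is that the contracted-product construction $E=Y\times^{\mathbb{G}^r_{m,\Z}}\mathbb{A}^r_{\Z}$ actually produces the equivariant line bundles, together with the $\mu_n$-stability of the splitting and of the $D_S$, which the paper asserts without comment; note that both arguments implicitly use that a $\mathbb{G}^r_m$-torsor is Zariski-locally trivial (Hilbert 90, $\mathbb{G}_m$ being special), which is what guarantees that $E$ exists as a scheme and that $E$ and the $D_S$ carry good $\mu_n$-actions, so that their classes are legitimately defined and the scissor and affine-bundle relations apply.
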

   \begin{proof}
   The torsor $Y$
   can be decomposed as a product $$\mathcal{L}_1^{\ast}\times_X \cdots \times_X \mathcal{L}^{\ast}_{r}$$ where $\mathcal{L}_1,
   \ldots,\mathcal{L}_{r}$ are $\mu_n$-equivariant line bundles on
   $X$ and $\mathcal{L}_i^{\ast}$ is obtained from $\mathcal{L}_i$ by removing the zero section. Now the relations in the equivariant Grothendieck ring immediately imply that
   $$[Y]=[X](\LL-1)^r$$ in $K^{\mu_n}_0(\Var_X)$.
   \end{proof}

\subsection{Monodromy action on the motivic zeta function}
Let $k$ be a field of characteristic zero and set $R=k\llb t\rrb$ and $K=k\llpar t\rrpar$.
Let $\mcl{X}$ be an $R$-scheme of finite type with smooth generic fiber $\mcl{X}_K$, and let $\omega$ be a volume form on $\mcl{X}_K$. Then the definition of the motivic zeta function $Z_{\mcl{X},\omega}(T)$ (Definition \ref{def:motzeta}) can be refined in the following way.
For every positive integer $n$, the finite group scheme $\mu_{n}$ of $n$-th roots of unity acts on $S(n)=\Spec R[u]/(u^n-t)$ from the right {\em via} multiplication on $u$:
 $$R[u]/(u^n-t)\to  R[u]/(u^n-t)\otimes_{\Z} \Z[\zeta]/(\zeta^n-1):u\mapsto \zeta u.$$
  We invert this action to obtain a left action on $S(n)$.
  This induces a left action of $\mu_n$ on $\mcl{X}(n)$.
 One can use this action to
upgrade the motivic integral $$\int_{\mcl{X}(n)}|\omega(n)|$$ to an element in the equivariant Grothendieck ring
 $\mathcal{M}_{\mcl{X}_k}^{\mu_n}$ of $\mcl{X}_k$-varieties with $\mu_n$-action -- see \cite{hartmann}, where one can remove the assumption that $k$ contains all the roots of unity, since it is not needed in the arguments. This equivariant motivic integral can be computed by taking a quasi-projective $\mu_n$-equivariant N\'eron smoothening $\mcl{Y}\to \mcl{X}(n)$ over $R(n)$: then
$$\int_{\mcl{X}(n)}|\omega(n)|=\sum_{i\in \Z}[C(i)]\LL^{-i}\quad \in \mathcal{M}^{\mu_n}_{\mcl{X}_k}$$
 where $C(i)$ is the union of the connected components $C$ of $\mcl{Y}_k$ such that $\ord_{C}\omega(n)=i$; note that $C(i)$ is stable under the action of $\mu_n$, because $\omega(n)$ is defined over $K$. A quasi-projective $\mu_n$-equivariant smoothening $\mcl{Y}\to \mcl{X}(n)$ can always be produced by means of the smoothening algorithm described in the proof of \cite[3.4.2]{BLR}; quasi-projectivity implies that the $\mu_n$-action on $\mcl{Y}$ is good.

 Now we can view the motivic zeta function
 $$Z_{\mcl{X},\omega}(T)=\sum_{n>0}\left(\int_{\mcl{X}(n)}|\omega(n)|\right)T^n$$
 as an object in $\mathcal{M}^{\widehat{\mu}}_{\mcl{X}_k}\llb T \rrb$. We will use the new notation $Z^{\widehat{\mu}}_{\mcl{X},\omega}(T)$ to indicate that we take the $\widehat{\mu}$-action into account.
  On the other hand, the schemes $\widetilde{E}(\tau)^o$ appearing in Theorems \ref{thm:main} and \ref{thm:etale} also carry
 an obvious action of the group scheme $\widehat{\mu}$, because $\mu_n$ acts on the fs base change $$\loga{X}\times_{S^{\dagger}}S(n)^{\dagger}$$ for every $n>0$ {\em via} the left action on $S(n)$.

 \begin{thm}\label{thm:equiv}
 If $k$ has characteristic zero, then Theorems \ref{thm:main} and \ref{thm:etale}
  are valid already for the equivariant motivic zeta function
 $Z^{\widehat{\mu}}_{\mcl{X},\omega}(T)$ in $\mathcal{M}^{\widehat{\mu}}_{\mcl{X}_k}\llb T \rrb$.
 \end{thm}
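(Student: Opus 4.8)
The plan is to re-run the four-step proof of Theorem~\ref{thm:main}, together with the Nisnevich-to-Zariski localisation argument in the proof of Theorem~\ref{thm:etale}, while keeping track of the $\widehat{\mu}$-action at every stage, so that all identities end up in $\mathcal{M}^{\widehat{\mu}}_{\mcl{X}_k}\llb T\rrb$ rather than in $\mathcal{M}_{\mcl{X}_k}\llb T\rrb$. The only parts of that proof with genuine geometric content are the power-series manipulation of Lemma~\ref{lemm:cone}, the torus-torsor statements of Proposition~\ref{prop:logblup} and Lemma~\ref{lemm:root}, and the computation of the class of a torus-torsor in the Grothendieck ring; for this last point Proposition~\ref{prop:eqtor} is precisely the required tool. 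The localisation argument in the proof of Theorem~\ref{thm:etale} uses nothing beyond the fact that the equivariant motivic integral is local for the Nisnevich topology, which remains true, so I would again reduce the Nisnevich statement to the Zariski one and concentrate on the equivariant form of Theorem~\ref{thm:main}.

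Steps~1 and~4 need essentially no modification. The series \eqref{eq:geometric} is $[\wtl{E}(\tau)^o]$ times a power series with coefficients in $\Z[\LL,\LL^{-1}]$, and Lemma~\ref{lemm:cone} is a formal identity over that coefficient ring, which sits inside $\mathcal{M}^{\widehat{\mu}}_{\mcl{X}_k}$ carrying the trivial $\widehat{\mu}$-action; hence \eqref{eq:geometric} is well-defined in the equivariant ring. In Step~4, when $\rho(\tau)=1$ one has $\loga{X}\times^{\fs}_{S^\dagger}S(1)^\dagger=\loga{X}$, so $\wtl{E}(\tau)^o=E(\tau)^o$ with the trivial $\mu_1$-action, matching the trivial action carried by $C(\tau)$ in the $n=1$ term of $Z^{\widehat{\mu}}_{\mcl{X},\omega}(T)$; and the identity $(\LL-1)^h\sum_{i_1,\dots,i_h>0}\LL^{-(i_1+\cdots+i_h)}=1$ holds in $\mathcal{M}^{\widehat{\mu}}_k$ because the $\N^h$-strata it involves all carry the trivial action.

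For Step~2 I would establish the equivariant refinement of Proposition~\ref{prop:logblup}: for any common multiple $m$ of $\rho(\tau)$ and $\rho(\tau')$, the morphism $\wtl{E}(\tau')^o\to\wtl{E}(\tau)^o$ is a $\mathbb{G}^{r(\tau)-r(\tau')}_{m,k}$-torsor whose structural action is $\mu_m$-equivariant with $\mu_m$ acting \emph{trivially} on the torus. Here the torsor group is $\Spec\Z[N_0]$ with $N_0=\ker\bigl(M^{\gp}_{F,\tau}\to M^{\gp}_{F',\tau'}\bigr)$, a combinatorial datum unchanged by the fs base change to $S(m)^\dagger$ --- the characteristic monoids do not change, by Proposition~\ref{prop:root}\eqref{it:rootdiv} --- so the deck-transformation action of $\mu_m$ is trivial on it even though $\mu_m$ continues to act compatibly on $\wtl{E}(\tau')^o$ and $\wtl{E}(\tau)^o$. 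Proposition~\ref{prop:eqtor} then yields $[\wtl{E}(\tau')^o]=(\LL-1)^{r(\tau)-r(\tau')}[\wtl{E}(\tau)^o]$ in $\mathcal{M}^{\widehat{\mu}}_{\mcl{X}_k}$, which is the only non-combinatorial input of Step~2; the bijections among the sets $M^{\vee,\loc}_{F,\tau'}$ and the equalities $u(h^*_K\omega)=u(\omega)$ and $u(e_t)$-unchanged need no modification.

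Step~3 is where I expect the main obstacle. The delicate point is to match two $\widehat{\mu}$-actions: the $\mu_n$-action on $\int_{\mcl{X}(n)}|\omega(n)|$ induced by the Galois action of $\mu_n$ on $S(n)=\Spec R[u]/(u^n-t)$, and the $\widehat{\mu}$-action carried by the schemes $\wtl{E}(\tau)^o$ in the formula. What must be checked is that the isomorphisms produced by Lemma~\ref{lemm:root} (and hence the decomposition $\wtl{E}(\tau)^o\cong\bigsqcup_{\tau'\mapsto\tau}\wtl{E}(\tau')^o$, over the vertical points $\tau'$ of $F(n)$ above $\tau$, used in Step~3) are compatible with these actions. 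This can be read off the chart-level description in the proof of Lemma~\ref{lemm:root}: there the $\mu$-action always arises from scaling the extra monoid generator of $\Z[\tfrac1m\N]$, so it commutes with the remaining toric data, with the pushout $N=(M\times\Z)\oplus_{\frac1\rho\N}\frac1m\N$, with the saturation, and with the identification of invertible elements supplied by Proposition~\ref{prop:root}\eqref{it:rootsharp}; hence it is transported correctly through the normalisation. Once this compatibility is in place, the exponent bookkeeping ($u'(\omega(n))=u(\omega)$ by compatibility of relative log differentials with fs base change, and $u(e_t)=in$) is identical to the non-equivariant argument, and the reduction to Step~4 goes through. Assembling Steps~1--4 over $\mathcal{M}^{\widehat{\mu}}_{\mcl{X}_k}\llb T\rrb$ yields the equivariant forms of \eqref{eq:motzeta} and \eqref{eq:motzeta-etale}.
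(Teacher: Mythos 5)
Your Steps 1--3 are broadly in the spirit of the paper's argument (in particular, your use of Proposition \ref{prop:eqtor} to get the equivariant form of \eqref{eq:torus} is exactly the paper's mechanism, although the triviality of the $\mu_n$-action on the translation torus should be justified by the fact that $\mu_n$ acts trivially on the stalks of the characteristic sheaf at fixed points of the fan, via \cite[2.1.1]{nakayama}, rather than by saying the kernel $N_0$ is ``a combinatorial datum unchanged by base change'' --- being abstractly unchanged does not by itself force the induced action to be trivial). The genuine gap is in your treatment of Step 4. After the Step 3 reduction, the degree-$n$ coefficient of $Z^{\widehat{\mu}}_{\mcl{X},\omega}(T)$ is the \emph{equivariant} motivic integral $\int_{\mcl{X}(n)}|\omega(n)|$, and the degree-one computation has to be performed on $\loga{X}\times^{\fs}_{S^\dagger}S(n)^{\dagger}$, on which $\mu_n$ acts nontrivially; your remark that Step 4 only involves the $n=1$ term with trivial $\mu_1$-action misreads the structure of the reduction. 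Concretely, to evaluate the equivariant integral one needs a $\mu_n$-equivariant N\'eron smoothening compatible with the log structure, i.e.\ the proper subdivision making the underlying scheme regular must be chosen equivariantly; the reference used in the non-equivariant Step 4 (\cite[4.6.31]{GaRa}) provides no equivariance, and without it the smooth locus of the modified model is not $\mu_n$-stable in a usable way and the sum over its strata does not compute the class in $\mathcal{M}^{\mu_n}_{\mcl{X}_k}$.

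This is precisely where the paper's proof has content that your proposal omits: one passes to an algebraic closure $k^a$, so that $\mu_n(k^a)$ is an honest finite group acting on the fan $F^a(n)$ of $\mcl{X}(n)^{\dagger}\times_{S^{\dagger}}(S^a)^{\dagger}$, and one invokes canonical equivariant resolution of singularities for toroidal embeddings \cite{wang} to produce a regular subdivision that is equivariant both under $\mu_n(k^a)$ and under $\Gal(k^a/k)$ --- the Galois equivariance being what lets the subdivision descend to $k$, a point your proposal never addresses (note that $\mu_n$ is only a group scheme when $k$ lacks the roots of unity, so ``equivariant subdivision'' cannot even be formulated without this d\'etour). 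With such a subdivision in hand, the smooth locus of the induced log modification is a $\mu_n$-equivariant N\'eron smoothening, the per-stratum identity \eqref{eq:order} goes through verbatim, and the only remaining issue is the equivariant version of \eqref{eq:torus}, handled as above. Without the equivariant subdivision your argument does not close, so the proposal as written has a missing key ingredient rather than being a complete alternative route.
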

 \begin{proof}
 We can follow a similar strategy as in the proof of Theorem \ref{thm:main}. Let $k^a$ be an algebraic closure of $k$ and set $(S^a)^{\dagger}=\Spec k^a\llb t\rrb$ with its standard log structure. To compute the degree $n$ coefficient of $Z_{\mcl{X},\omega}(T)$ we can  choose a regular subdivision of the fan $F^a(n)$ of $\mcl{X}(n)^{\dagger}\times_{S^\dagger}(S^a)^{\dagger}$ that is equivariant with respect to the actions of $\mu_n(k^a)$ and the Galois group $\Gal(k^a/k)$. This can be achieved by canonical equivariant resolution of singularities for toroidal embeddings (see for instance the remark on p.~33 of \cite{wang}).
 %: starting with a barycentric subdivision, one may assume that $\mu_n(k^a)$, resp.~$\Gal(k^a/k)$, acts trivially on the monoid $F^a(n)_{\tau}$ as soon as it fixes the point %$\tau$.
        The induced log modification of $\mcl{X}(n)$ is a regular scheme and its smooth locus is a $\mu_n$-equivariant N\'eron smoothening of $\mcl{X}(n)$. Then a similar computation as in the last step of the proof of Theorem \ref{thm:main} yields the desired result.

  The only step that requires further clarification is the equality \eqref{eq:torus}: we need to show that it remains valid in the equivariant Grothendieck ring $\mathcal{M}^{\widehat{\mu}}_{\mcl{X}_k}$. For every fixed point $\sigma$ of the $\mu_n(k^a)$-action on $F^a(n)$, the group $\mu_n(k^a)$ also acts trivially on the stalk
   of $M_{F^a(n)}$ at $\sigma$ by \cite[2.1.1]{nakayama}. In the notation of \eqref{eq:torus}, this means that the natural morphism $\widetilde{E}(\tau')^o\to \widetilde{E}(\tau)^o$ is a $\mu_n$-equivariant torsor with translation group $\mathbb{G}_{m,k}^{r(\tau)-r(\tau')}$, where $\mu_n$ acts trivially on $\mathbb{G}_{m,k}^{r(\tau)-r(\tau')}$.  Now it follows from Proposition \ref{prop:eqtor} that  $$[\wtl{E}(\tau')^o]=(\LL-1)^{r(\tau)-r(\tau')} [\wtl{E}(\tau)^o]$$ in $\mathcal{M}^{\widehat{\mu}}_{\mcl{X}_k}$.
\end{proof}

The definition of a set of candidate poles (Definition \ref{def:poles}) can be generalized to elements of $\mathcal{M}^{\widehat{\mu}}_{\mcl{X}_k}\llb T \rrb$ in the obvious way.
 Then we can deduce the following result from Theorem \ref{thm:equiv}.

\begin{cor}\label{cor:monopoles}
Assume that $k$ has characteristic zero. Let $\loga{X}$ be a smooth fs log scheme of finite type over $S^\dagger$ (with respect to the Zariski or Nisnevich topology) such that $\mcl{X}_K$ is smooth over $K$. Let $\omega$ be a volume form on $\mcl{X}_K$.
 Write $\mcl{X}_k=\sum_{i\in I}N_i E_i$ and denote by $\nu_i$ the multiplicity of $E_i$ in $\mathrm{div}_{\loga{X}}(\omega)$, for every $i\in I$.
  Then $$\mathcal{P}(\mcl{X})=\{-\frac{\nu_i}{N_i}\,|\,i\in I\}$$ is a set of candidate poles for $Z^{\widehat{\mu}}_{\mcl{X},\omega}(T)$.
 \end{cor}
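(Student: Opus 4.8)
The idea is to combine Theorem \ref{thm:equiv} with the argument already used to prove Proposition \ref{prop:poles} and Corollary \ref{cor:etpoles}, being careful that everything goes through in the equivariant setting $\mathcal{M}^{\widehat{\mu}}_{\mcl{X}_k}$. First I would invoke Theorem \ref{thm:equiv}, which tells us that $Z^{\widehat{\mu}}_{\mcl{X},\omega}(T)$ is given by the same formula \eqref{eq:motzeta} (resp. \eqref{eq:motzeta-etale}) as before, now interpreted in $\mathcal{M}^{\widehat{\mu}}_{\mcl{X}_k}\llb T\rrb$. It therefore suffices to show, for each vertical $\tau$ in the fan $F$ (resp. each $\tau$ in the set $P$ of stratum-generic points in the Nisnevich case), that $\mathcal{P}(\mcl{X})$ is a set of candidate poles for the inner series $\sum_{u\in (M^{\sharp}_{\loga{X},\tau})^{\vee,\loc}} \LL^{-u(\omega)}T^{u(e_t)}$, with coefficients regarded in $\mathcal{M}^{\widehat{\mu}}_{\mcl{X}_k}$ rather than $\mathcal{M}_{\mcl{X}_k}$.

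The key point is that this reduction statement is purely monoid-combinatorial: Lemma \ref{lemm:cone} expresses the relevant geometric series as a $\Z[L,L^{-1},T]$-linear combination of the rational functions $T/(1-L^{-u_i^{\gp}(m)}T^{u_i(n)})$ over the generators $u_i$ of the one-dimensional faces of the dual cone, and the exponents $(u_i^{\gp}(m),u_i(n))=(u_i(\omega),u_i(e_t))$ are, exactly as in the proof of Proposition \ref{prop:poles} (resp. Corollary \ref{cor:etpoles}), the pair (multiplicity of the corresponding boundary component $E$ in $\mathrm{div}_{\loga{X}}(\omega)$, multiplicity of $E$ in $\mcl{X}_k$); when $E$ is horizontal this pair is $(1,0)$ and contributes no pole, and otherwise it contributes the rational number $-\nu_i/N_i\in\mathcal{P}(\mcl{X})$. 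Since this identity takes place in $\Z[L,L^{-1},T][\dots]$ before any specialization, it remains valid after mapping $L\mapsto \LL$ into the equivariant ring $\mathcal{M}^{\widehat{\mu}}_{\mcl{X}_k}$; the $\widehat{\mu}$-action plays no role in the denominators, which involve only powers of $\LL$ and monomials in $T$ with integer coefficients. Thus $Z^{\widehat{\mu}}_{\mcl{X},\omega}(T)$ lies in the subring $\mathcal{M}^{\widehat{\mu}}_{\mcl{X}_k}[T,1/(1-\LL^aT^b)]$ with $a/b$ ranging over $\mathcal{P}(\mcl{X})$, which is precisely the assertion.

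I do not expect a genuine obstacle here: the one subtlety is bookkeeping, namely making sure the denominators produced by Lemma \ref{lemm:cone} only involve the pairs $(\nu_i,N_i)$ attached to actual prime components $E_i$ of $\mcl{X}_k$ — in the Nisnevich case a component may have several formal branches through $\tau$, but as noted in the proof of Corollary \ref{cor:etpoles} each such branch still contributes a one-dimensional face whose associated pair is $(\nu_i,N_i)$ for the component $E_i$ it lies on, so no new candidate poles appear. Consequently the proof is, almost verbatim, the proof of Proposition \ref{prop:poles} and Corollary \ref{cor:etpoles}, with Theorem \ref{thm:equiv} supplying the equivariant refinement of the formula; I would simply remark that the argument is identical and point to Theorem \ref{thm:equiv}.
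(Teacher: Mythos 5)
Your proposal is correct and follows essentially the same route as the paper, whose proof simply notes that the argument is identical to those of Proposition \ref{prop:poles} and Corollary \ref{cor:etpoles}, now applied to the equivariant formula supplied by Theorem \ref{thm:equiv}. Your additional observations (that Lemma \ref{lemm:cone} operates at the level of $\Z[L,L^{-1},T]$ before specialization, so the $\widehat{\mu}$-action is irrelevant to the denominators, and that formal branches in the Nisnevich case introduce no new candidate poles) are exactly the bookkeeping the paper leaves implicit.
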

 \begin{proof}
The argument is entirely similar to the proofs of Proposition \ref{prop:poles} and Corollary \ref{cor:etpoles}.
\end{proof}

\section{Applications to Denef and Loeser's motivic zeta function}\label{sec:DL}
\subsection{The motivic zeta function of Denef-Loeser}
 Let $k$ be a field of characteristic zero, let $X$ be an irreducible smooth $k$-variety and let $$f:X\to \A^1_k=\Spec k[t]$$ be a dominant morphism of $k$-schemes.
  We set $X_0=f^{-1}(0)$. In \cite{DL-barc}, Denef and Loeser have defined the {\em motivic zeta function} $Z_f(T)$ of $f$, which is a power series in $\mathcal{M}^{\widehat{\mu}}_{X_0}\llb T \rrb$ that can be viewed as a motivic upgrade of Igusa's local zeta function for polynomials over $p$-adic fields.
 The famous {\em monodromy conjecture}
 predicts that the set of roots of the Bernstein polynomial of $f$ is a set of candidate poles for $Z_f(T)$ (in the sense of Definition \ref{def:poles}). This has been proven when $X$ has dimension $2$ and for some specific classes of singularities, but the conjecture is wide open in general. In fact, to our best knowledge, the proofs of the dimension $2$ case in the literature consider a slightly weaker conjecture, dealing only with the so-called ``na\"ive'' motivic zeta function, which can be viewed as the quotient of $Z_f(T)$ by the action of $\widehat{\mu}$ (up to multiplication by a factor $\LL-1$). We will explain below how the argument can be refined to prove the conjecture for $Z_f(T)$ (Corollary \ref{cor:twovar}).

  Set $R=k\llb t \rrb$, $K=k\llpar t\rrpar$ and $\mcl{X}=X\times_{k[t]}R$. Let us recall how one can rewrite $Z_f(T)$ as the motivic zeta function of $(\mcl{X},\omega)$ for a suitable volume form $\omega$ on $\mcl{X}_K$.
  Since the definition of the motivic zeta function is local on $X$, we can assume that $X$ carries a volume form $\phi$ over $k$. To this volume form, one can attach a so-called {\em Gelfand-Leray form} $\omega=\phi/df$, which is a volume form on $\mcl{X}_K$ \cite[9.5]{NiSe}. Theorem 9.10 in \cite{NiSe} states that $$Z_{\mcl{X},\omega}(T)=Z_f(\LL T)$$ in $\mathcal{M}_{X_0}\llb T \rrb$ (where we forget the $\widehat{\mu}$-action on the right hand side). This can also be deduced from Corollary \ref{cor:snc} and Denef and Loeser's formula for the motivic zeta function in terms of a log resolution for $f$ \cite[3.3.1]{DL-barc}.
   Using Theorem \ref{thm:equiv}, one can moreover show that this equality holds already for the equivariant motivic zeta function $Z^{\widehat{\mu}}_{\mcl{X},\omega}(T)$ in $\mathcal{M}^{\widehat{\mu}}_{X_0}\llb T \rrb$.
   More precisely, let $h:Y\to X$ be a log resolution for the pair $(X,X_0)$, and write $h^*X_0=\sum_{i\in I}N_i E_i$ and
 $K_{Y/X}=\sum_{i\in I}(\nu_i-1)E_i$. Set $\mcl{Y}=Y\times_{k[t]}R$ and endow it with the divisorial log structure induced by $\mcl{Y}_k$.
  Then the multiplicity of $E_i$ in $\mathrm{div}_{\loga{Y}}(\omega)$ equals $\nu_i-N_i$, so that the expression in Corollary \ref{cor:snc}
   is precisely Denef and Loeser's formula for $Z_f(\LL T)$. Hence, we obtain that
   $$Z^{\widehat{\mu}}_{\mcl{X},\omega}(T)=Z_f(\LL T)$$ in $\mathcal{M}^{\widehat{\mu}}_{X_0}\llb T \rrb$.

  Thus Theorem \ref{thm:equiv} and Corollary \ref{cor:monopoles} also apply to the motivic zeta function of Denef and Loeser. As an illustration, we will apply these results to two particular situations:
 the case where $X$ has dimension $2$, and the case where $f$ is a polynomial that is non-degenerate with respect to its Newton polyhedron. These cases have been studied extensively in the literature; we will explain how some of the main results can be viewed as special cases of Theorem \ref{thm:main}.

 \subsection{The surface case}\label{sec:curves}
 Assume that  $X$ has dimension $2$, and let $h:Y\to X$ be a log resolution for the pair $(X,X_0)$. We write $h^*X_0=\sum_{i\in I}N_i E_i$ and
 $K_{Y/X}=\sum_{i\in I}(\nu_i-1)E_i$. The numbers $N_i$ and $\nu_i$ are called the {\em numerical data} associated with the component $E_i$. It follows from Denef and Loeser's formula \cite[3.3.1]{DL-barc} that
 $$\mathcal{P}=\{-\frac{\nu_i}{N_i}\,|\,i\in I\}$$ is a set of candidate poles for $Z_f(T)$. However, it is known that many of these candidate poles are not actual poles of $Z_f(T)$.   In \cite{veys}, Veys has provided a conceptual explanation for this phenomenon
    by providing a formula for the topological zeta function (a coarser predecessor of the motivic zeta function) in terms of the relative log canonical model of $(X,X_0)$ over $X$. We can now upgrade this result to the motivic zeta function and
     understand it as a special case of Theorem \ref{thm:main}.

     \begin{thm}\label{thm:twovar}
      For every $i\in I$ such that $E_i$ is exceptional with respect to $h$, we denote by $k_i$ the field $H^0(E_i,\mathcal{O}_{E_i})$.
  We define $I_0$ to be the subset of $I$ consisting of the indices $i$ such that $E_i$ is an exceptional component of $h^*X_0$ satisfying $(E_i)^2\geq  -2[k_i:k]$. Then
  $$\mathcal{P}'=\{-\frac{\nu_i}{N_i}\,|\,i\in I\setminus I_0\}$$
  is still a set of candidate poles of $Z_f(T)\in \mathcal{M}^{\widehat{\mu}}_{X_0}\llb T \rrb$.
     \end{thm}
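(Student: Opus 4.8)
The plan is to deduce this statement from Theorem \ref{thm:equiv} (equivalently Corollary \ref{cor:monopoles}) applied to a smaller log smooth model over $S^\dagger$, following the strategy of Veys \cite{veys} for the topological zeta function. Set $\mcl{Y}=Y\times_{k[t]}R$ and equip it with the divisorial log structure induced by $\mcl{Y}_k$; by Proposition \ref{prop:regsm} this yields a log scheme $\loga{Y}$ smooth over $S^\dagger$, with $\mcl{Y}_k=\sum_{i\in I}N_iE_i$ and $\mathrm{div}_{\loga{Y}}(\omega)=\sum_{i\in I}(\nu_i-N_i)E_i$, and recall $Z^{\widehat\mu}_{\mcl{X},\omega}(T)=Z_f(\LL T)$. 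The point is that the components indexed by $I_0$ are redundant. First I would run the relative minimal model program for the pair $(\mcl{Y},\mcl{Y}_{k,\red})$ over $\mcl{X}$; since $\mcl{Y}$ is a regular projective surface over $R$ this is elementary. Adjunction on each exceptional curve gives $(K_{\mcl{Y}}+E_i)\cdot E_i=-2[k_i:k]$ (as $E_i\cong\mathbb{P}^1_{k_i}$), hence $K_{\mcl{Y}}\cdot E_i=-2[k_i:k]-(E_i)^2$, so whether $E_i$ is $(K_{\mcl{Y}}+\mcl{Y}_{k,\red})$-negative or $(K_{\mcl{Y}}+\mcl{Y}_{k,\red})$-trivial — and thus contracted by the program and the passage to the relative canonical model — is governed by the position of $(E_i)^2$ relative to $-2[k_i:k]$; using the negative definiteness of the intersection form on the exceptional curves over a point of $X$, one checks that the contracted components are exactly the $E_i$ with $i\in I_0$.

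Next I would verify that the resulting surface $\mcl{X}'$, with the divisorial log structure of $\mcl{X}'_k=\pi_*\mcl{Y}_k$, is again smooth over $S^\dagger$ — this is the crucial point and the place where dimension two is indispensable. The morphism $\pi\colon\mcl{Y}\to\mcl{X}'$ introduces only two-dimensional log terminal, i.e.\ cyclic quotient, singularities, so that $(\mcl{X}',\mcl{X}'_{k,\red})$ is toroidal; what still has to be checked is that the structural morphism to $S^\dagger$ stays log smooth, for which the multiplicities $N_i$ of the components of $\mcl{X}'_k$ through such a singular point must fit into a toroidal chart over $S^\dagger$. If a naive contraction would spoil this vertical log structure (typically at the image of a component of multiplicity $>1$), one keeps the minimal further toric partial resolution making it log smooth; because the exceptional divisors of a Hirzebruch--Jung resolution of a cyclic quotient singularity can be described explicitly together with the induced numerical data, all such extra divisors $E_j$ satisfy $-\nu_j/N_j\in\mathcal{P}'$. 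Either way we obtain a log smooth model $\loga{X'}$ over $S^\dagger$ dominating $\mcl{X}$, for which $\mcl{X}'_K=\mcl{X}_K$ is smooth over $K$, and $\loga{Y}\to\loga{X'}$ (or a dominating log modification) is log étale, so that $\omega_{\loga{Y}/S^\dagger}=\pi^*\omega_{\loga{X'}/S^\dagger}$; consequently the surviving $E_i$, $i\in I\setminus I_0$, keep multiplicity $N_i$ in $\mcl{X}'_k$ and multiplicity $\nu_i-N_i$ in $\mathrm{div}_{\loga{X'}}(\omega)$.

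Finally, $\mcl{X}'\to\mcl{X}$ is proper and an isomorphism over $\mcl{X}_K$, so by the remark following Definition \ref{def:motzeta} together with Theorem \ref{thm:equiv} the series $Z^{\widehat\mu}_{\mcl{X}',\omega}(T)$ and $Z^{\widehat\mu}_{\mcl{X},\omega}(T)$ have the same image in $\mathcal{M}^{\widehat\mu}_{X_0}\llb T\rrb$. Applying Corollary \ref{cor:monopoles} to $\loga{X'}$ shows that $\{-(\nu_i-N_i)/N_i\mid i\in I\setminus I_0\}$ is a set of candidate poles for $Z^{\widehat\mu}_{\mcl{X},\omega}(T)$; substituting $T\mapsto\LL^{-1}T$ in the identity $Z^{\widehat\mu}_{\mcl{X},\omega}(T)=Z_f(\LL T)$ — under which every factor $1/(1-\LL^aT^b)$ becomes $1/(1-\LL^{a-b}T^b)$ and the value $-(\nu_i-N_i)/N_i$ becomes $-\nu_i/N_i$ — converts this into $\mathcal{P}'=\{-\nu_i/N_i\mid i\in I\setminus I_0\}$ for $Z_f(T)\in\mathcal{M}^{\widehat\mu}_{X_0}\llb T\rrb$, as claimed.

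I expect the main obstacle to be the log smoothness assertion of the second paragraph: carrying out the contraction inside the category of log smooth $S^\dagger$-schemes without reintroducing candidate poles outside $\mathcal{P}'$. This is precisely where the two-dimensional input (classification of surface quotient singularities, Hirzebruch--Jung continued fractions) is used, and it is the motivic counterpart of the combinatorial heart of Veys's argument in \cite{veys}; everything else reduces to bookkeeping with intersection numbers and the dictionary between $Z_f$ and $Z_{\mcl{X},\omega}$.
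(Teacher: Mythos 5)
Your overall route coincides with the paper's: contract the redundant exceptional components so as to pass to the relative log canonical model, show that the resulting model is log smooth over $S^\dagger$, apply Corollary \ref{cor:monopoles}, and translate back to $Z_f(T)$ through $Z^{\widehat{\mu}}_{\mcl{X},\omega}(T)=Z_f(\LL T)$. However, two steps of your argument have genuine gaps. First, the identification of the contracted components with $I_0$ does not follow from the adjunction computation you give: for an exceptional curve $E_i\cong\mathbb{P}^1_{k_i}$ one has
$$(K_{\mcl{Y}}+\mcl{Y}_{k,\red})\cdot E_i=(K_{\mcl{Y}}+E_i)\cdot E_i+(\mcl{Y}_{k,\red}-E_i)\cdot E_i=-2[k_i:k]+E_i\cdot(\mcl{Y}_{k,\red}-E_i),$$
so the self-intersection cancels and $(K+\Delta)$-negativity or triviality is governed by how often $E_i$ meets the \emph{other} components of the reduced fiber, not by the position of $(E_i)^2$ relative to $-2[k_i:k]$ (the quantity $-2[k_i:k]-(E_i)^2$ is $K_{\mcl{Y}}\cdot E_i$, and the plain $K_{\mcl{Y}}$-MMP over $\mcl{X}$ contracts everything back to $\mcl{X}$). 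Already for $x^2+y^3$ the exceptional curve carrying the pole $-5/6$ has self-intersection $-1$ and survives in the log canonical model, while the curve of self-intersection $-3$ is contracted; so ``one checks'' is hiding the actual two-dimensional combinatorial work (the contracted configurations are chains, which is also what gives \emph{cyclic} quotient, i.e.\ toroidal, points) and the reconciliation with the definition of $I_0$ needs a real argument in the spirit of \cite{veys}.

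Second, and this is where the paper spends most of its proof, you apply Corollary \ref{cor:monopoles} to the contracted model, but that corollary (like Theorems \ref{thm:main}, \ref{thm:etale} and \ref{thm:equiv}) is proved only for Zariski or Nisnevich log structures. After the contraction, a component of the strict transform of $X_0$ can acquire self-intersection points, and at such a point the divisorial log structure is only \'etale; Example \ref{ex:etale} shows that the explicit formula can genuinely fail for \'etale log structures, so this is not a formality. In characteristic zero your worry about the structural morphism to $S^\dagger$ is not the issue: \'etale-local log regularity of the contracted model (this is what \cite[\S3]{quotient} supplies) together with flatness already gives log smoothness by Proposition \ref{prop:regsm}, with no Hirzebruch--Jung analysis needed. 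What is missing is the reduction to the Zariski or Nisnevich setting: when $k$ is algebraically closed the log structure is Nisnevich at the self-intersection points, and in general one performs the log blow-up at each such point (as in the proof of \cite[5.4]{niziol}); since this is a log \'etale modification, the new exceptional divisor has numerical data $(2N_i,2\nu_i)$, so no candidate poles outside $\mathcal{P}'$ appear, and Corollary \ref{cor:monopoles} then applies to the resulting Zariski log smooth model. Your ``further toric partial resolution'' fallback is aimed at a different (and, in characteristic zero, vacuous) problem and does not address this point.
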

\begin{proof}
     Set $\mcl{Y}=Y\times_{k[t]}R$.
   Contracting all the components $E_i$ with $i\in I_0$ yields a new model $\mcl{Z}$ of $\mcl{X}_K$ that is proper over $\mcl{X}$, namely, the log canonical model of $(\mcl{X},\mcl{X}_k)$ over $\mcl{X}$. We endow $\mcl{Z}$ with the divisorial log structure induced by $\mathcal{Z}_k$. It follows from \cite[\S3]{quotient} that the resulting log scheme $\loga{Z}$ is regular with respect to the \'etale topology, and since $k$ has characteristic zero, this implies that $\loga{Z}$ is smooth over $S^\dagger$ with respect to the \'etale topology (Proposition \ref{prop:regsm}).

   The log structure on $\loga{Z}$ fails to be Zariski precisely at the self-intersection points of components in the strict transform of $X_0$.
   If $k$ is algebraically closed, then the log structure is Nisnevich at these points (because they have algebraically closed residue field) and our result follows immediately from Corollary \ref{cor:monopoles}. For general $k$, we can make the log structure Zariski by blowing up $\loga{Z}$ at each of the self-intersection points (see the proof of \cite[5.4]{niziol}). These blow-ups are log blow-ups, so that the resulting morphism of log schemes $\loga{W}\to\loga{Z}$ is \'etale. Therefore, blowing up at a self-intersection point of a component $E_i$ yields an exceptional divisor with numerical data $N=2N_i$ and $\nu=2\nu_i$. This implies that $\mathcal{P}(\loga{W})=\mathcal{P}'$, so that the result follows from Corollary \ref{cor:monopoles} (applied to the smooth Zariski log scheme $\loga{W}$).
\end{proof}

\begin{cor}\label{cor:twovar}
 There exists a set of candidate poles for $Z_f(T)$ that consists entirely of roots of the Bernstein polynomial of $f$.
 Thus the monodromy conjecture for $Z_f(T)\in \mathcal{M}_{\mcl{X}_k}^{\widehat{\mu}}\llb T\rrb $ holds in dimension $2$.
\end{cor}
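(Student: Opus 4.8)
The plan is to deduce the corollary from Theorem~\ref{thm:twovar} together with the already established cases of the monodromy conjecture for plane curve singularities. By Theorem~\ref{thm:twovar}, the set $\mathcal{P}'=\{-\nu_i/N_i\,|\,i\in I\setminus I_0\}$ is a set of candidate poles of $Z_f(T)$ in $\mathcal{M}^{\widehat\mu}_{X_0}\llb T\rrb$, so it suffices to prove that every element of $\mathcal{P}'$ is a root of the Bernstein polynomial $b_f(s)$; the second assertion of the corollary then follows because, by the remark after Definition~\ref{def:poles}, every actual pole of $Z_f(T)$ belongs to $\mathcal{P}'$. I would split $\mathcal{P}'$ into two parts. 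The value $-1$ occurs because the strict transform of $X_0$ always survives in $I\setminus I_0$ (its components have $N_i=\nu_i=1$), and $-1$ is a root of $b_f(s)$ for any non-invertible $f$, since $(s+1)\mid b_f(s)$. The remaining elements are of the form $-\nu_i/N_i$ for exceptional components $E_i$ of $h^{*}X_0$ with $(E_i)^2<-2[k_i:k]$.

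For these I would reduce to the case of an algebraically closed ground field. Passing to an algebraic closure $k^{a}$ leaves $b_f(s)$ unchanged, turns $h$ into a log resolution over $k^{a}$, and decomposes each $E_i$ of residue field $k_i$ into $[k_i:k]$ Galois-conjugate prime components, each of which is a rational curve over $k^{a}$ with self-intersection $(E_i)^2/[k_i:k]<-2$ and with the same numerical data $(N_i,\nu_i)$ as $E_i$. Hence it is enough to show: if $E$ is an exceptional component of a log resolution of a plane curve singularity over an algebraically closed field of characteristic zero and $E^{2}\leq-3$, then $-\nu_E/N_E$ is a root of $b_f(s)$. Equivalently, contracting all exceptional components of self-intersection $\geq-2$ yields the relative log canonical model of $(\mcl{X},\mcl{X}_k)$ over $\mcl{X}$, which is intrinsic to $f$, and one must show that each of its numerical data is a root of $b_f(s)$.

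This last statement is precisely the form of the monodromy conjecture for curves that is proved in the literature (Loeser, Veys; compare \cite{veys}): the numerical data appearing on the relative log canonical model --- equivalently, those coming from the rupture components of the minimal embedded resolution --- are roots of $b_f(s)$. The only point where our setting goes beyond the existing arguments, which treat only the na\"ive motivic (or topological) zeta function, is that $\mathcal{P}'$ remains a set of candidate poles for the $\widehat\mu$-equivariant series $Z_f(T)\in\mathcal{M}^{\widehat\mu}_{X_0}\llb T\rrb$; this is exactly what Corollary~\ref{cor:monopoles}, applied through Theorem~\ref{thm:twovar}, provides. I expect the main obstacle to be essentially bookkeeping: identifying the set $\mathcal{P}'$ produced by our log smooth (log canonical) model with the set of numerical data to which the classical proof of the curve case applies, and checking that the passage between different log resolutions does not enlarge it. Once that dictionary is in place, the corollary follows at once.
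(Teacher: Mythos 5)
Your proposal follows essentially the same route as the paper: the paper's entire proof consists of combining Theorem \ref{thm:twovar} with Loeser's theorem \cite[III.3.1]{loeser}, which asserts that every element of $\mathcal{P}'$ is a root of the Bernstein polynomial --- exactly the input you invoke (via Loeser--Veys) after your reductions. Your extra bookkeeping (passage to $k^{a}$, and the $(s+1)\mid b_f(s)$ argument for the strict transform, which as stated needs $f$ reduced, since otherwise those components contribute $-1/N_i$ and one must again fall back on Loeser) is not needed in the paper, which simply cites Loeser's result for all of $\mathcal{P}'$ at once.
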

\begin{proof}
   Loeser has proven in \cite[III.3.1]{loeser} that every element of $\mathcal{P}'$ is a root of the Bernstein polynomial of $f$.
\end{proof}

  Analogous results have previously appeared in the literature for the $p$-adic zeta function \cite{loeser, strauss}, the topological zeta function \cite{veys} and the so-called ``na{\"\i}ve'' motivic zeta function \cite{rodrigues}.

\subsection{Non-degenerate polynomials}\label{sec:nondeg}
As a second illustration, we will use our results to recover the formula for the motivic zeta function of a  polynomial that is non-degenerate with respect to its Newton polyhedron \cite[\S2.1]{guibert} (see also \cite[\S10]{bories}
for a calculation of the local  ``na{\"i}ve'' motivic zeta function at the origin). In fact, our computations show that the formula in \cite[2.1.3]{guibert} has some flaws; we will explain in Remark \ref{rem:guibert} what needs to be corrected. Let $$f=\sum_{m\in \N^n}a_{m}x^{m}$$ be a non-constant polynomial in $k[x_1,\ldots,x_n]$, where we use the multi-index notation for $x=(x_1,\ldots,x_n)$. We assume that $f(0,\ldots,0)=0$. The {\em support} $S(f)$ of $f$ is the set of $m\in \N^n$ such that $a_{m}$ is non-zero, and the {\em Newton polyhedron} $\Gamma(f)$ of $f$ is the convex hull of $$\bigcup_{m\in S(f)}(m+\R_{\geq 0}^n).$$ For every face $\gamma$ of $\Gamma(f)$, we set
$$f_{\gamma}=\sum_{m \in \gamma\cap \N^n}a_{m}x^{m}.$$
 Then $f$ is called {\em non-degenerate} with respect to its Newton polyhedron if, for every face $\gamma$ of $\Gamma(f)$, the polynomial $f_{\gamma}$ has no critical points in the torus $\mathbb{G}_{m,k}^n$ (this includes the case $\gamma=\Gamma(f)$). This condition was introduced by Kushnirenko in \cite{kouchnirenko}. It guarantees that many interesting invariants of the singularities of $f$ can be computed from the Newton polyhedron in a combinatorial way. In particular, every regular subdivision of the dual fan of $\Gamma(f)$ defines a toric modification of $\A^n_k$ that is a log resolution for the pair $(\A^n_k,\mathrm{div}(f))$. Moreover, if we fix the support $S(f)$, then $f$ is Newton non-degenerate for a generic choice of coefficients $a_{m}$.

 We denote by $\Sigma$ the dual fan of $\Gamma(f)$ and by $h:Y\to \A^n_k$ the toric modification associated with the subdivision $\Sigma$ of $(\R_{\geq 0})^n$.
 We view $Y$ as a $k[t]$-scheme {\em via} the morphism $f\circ h:Y\to \Spec k[t]$ and we set  $\mcl{Y}=Y\times_{k[t]}R$. We denote by $H$ the pullback to $\mcl{Y}$ of the union of the coordinate hyperplanes in $\A^n_k$.
   We endow $\mcl{Y}$ with the divisorial Zariski log structure induced by the divisor
  $\mcl{Y}_k+H$. The result is a Zariski log scheme $\loga{Y}$ over $S^{\dagger}$.

  \begin{prop}\label{prop:nondeg-smooth}
  The log scheme $\loga{Y}$ is fine and saturated, and smooth over $S^{\dagger}$.
  \end{prop}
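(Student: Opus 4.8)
The plan is to reduce to Proposition \ref{prop:regsm} and then invoke the classical fact that Newton non-degeneracy turns the toric modification $h$ into a toroidal resolution. First I would observe that $\mcl{Y}$ is flat over $S$: on each affine toric chart the coordinate ring of $Y$ is a domain and $f\circ h$ is a nonzero element of it, hence a nonzerodivisor, so $\mcl{Y}_k=\mathrm{div}(f\circ h)$ is a Cartier divisor and $\mcl{Y}\to S$ is flat. Since $k$ has characteristic zero, Proposition \ref{prop:regsm} then guarantees that $\loga{Y}$ is smooth over $S^{\dagger}$ as soon as $\loga{Y}$ is regular, so it suffices to prove the latter. Regularity of a log scheme is local on $\mcl{Y}$; at a point of the generic fibre $\mcl{Y}_K$ the log structure is pulled back from the toric boundary of $Y$ (the component $\mcl{Y}_k$ of the boundary divisor does not meet $\mcl{Y}_K$), where regularity is the standard fact for toric varieties, so the whole content is at the points of $\mcl{Y}_k$.

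Next I would carry out the local analysis on an affine chart $U_\sigma=\Spec k[\sigma^\vee\cap M]$ of $Y$ attached to a maximal cone $\sigma$ of $\Sigma$, with $M=\Z^n$. Writing $w_\sigma\in S(f)$ for the vertex of $\Gamma(f)$ corresponding to $\sigma$, one has a factorization $f\circ h=\chi^{w_\sigma}\cdot\wtl{f}_\sigma$ with $\wtl{f}_\sigma=\sum_{m\in S(f)}a_m\chi^{m-w_\sigma}\in k[\sigma^\vee\cap M]$; the zero divisor of $\wtl{f}_\sigma$ is precisely the restriction to $U_\sigma$ of the strict transform $E$ of $\mathrm{div}(f)$ under $h$, and $\wtl f_\sigma$ has nonzero constant term $a_{w_\sigma}$. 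Thus the divisor $\mcl{Y}_k+H$ is, locally on $U_\sigma\times_{\A^1_k}\Spec R$, supported on $E$ together with the torus-invariant divisors of $U_\sigma$, and $E$ enters $\mcl{Y}_k$ with multiplicity one — this uses non-degeneracy for the face $\Gamma(f)$ itself, which rules out multiple components of $\mathrm{div}(f)$ meeting $\mathbb{G}_{m,k}^n$. The key point is that for every orbit $O_\tau$ of $U_\sigma$ the restriction $\wtl{f}_\sigma|_{O_\tau}$ has no critical point on $O_\tau$: this is exactly the non-degeneracy condition for the face of $\Gamma(f)$ corresponding to $\tau$, and it says precisely that $E$ is smooth and meets every torus orbit of $Y$ transversally. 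Hence $(Y,E+B)$, with $B$ the toric boundary of $Y$, is a toroidal pair, i.e. a regular log scheme for its divisorial log structure, and, locally on $\mcl{Y}$, $\mcl{Y}_k+H$ is the preimage of $E+B$ under the regular morphism $\mcl{Y}\to Y$ (the base change of $S\to\A^1_k$, which is flat with geometrically regular fibres since $\car k=0$); so $\loga{Y}$ is regular and Proposition \ref{prop:regsm} gives the claim. Concretely this amounts to the same chart computation as in the proof of Proposition \ref{prop:regsm}: at a point $\bar{y}$ of $\mcl{Y}_k$ over a point of $O_\tau$ one reads off the characteristic monoid, which is $(\tau^\vee\cap M)^\sharp$ enlarged by one copy of $\N$ exactly when $\bar y$ lies on $E$, and the induced chart for $\loga{Y}\to S^{\dagger}$ satisfies Kato's criterion for log smoothness, the torsion conditions there being automatic in characteristic zero.

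I expect the main obstacle to be the transversality statement, namely that non-degeneracy for all faces of $\Gamma(f)$ forces the strict transform $E$ to be smooth and transverse to the toric stratification of $Y$ at every point. When $\Sigma$ is a regular subdivision, so that $Y$ is smooth, this is the well-known statement that $h$ is a log resolution of $(\A^n_k,\mathrm{div}(f))$ and $\mcl{Y}_k+H$ has strict normal crossings \cite{kouchnirenko,guibert}; in general $Y$ carries toric singularities and one has to phrase everything in the language of toroidal embeddings and log regularity, but the verification is orbit-by-orbit and formally identical. Everything else in the argument is routine.
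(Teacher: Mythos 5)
Your argument is correct, and its mathematical core is the same as the paper's: both reduce, via Proposition \ref{prop:regsm}, to showing that $\loga{Y}$ is regular (your explicit flatness check is a welcome addition, since flatness over $S$ is a hypothesis of that proposition which the paper leaves implicit), and both extract regularity along the boundary from the orbit-by-orbit observation that on each torus orbit the strict transform of $\mathrm{div}(f)$ is cut out by $f_{\gamma}/x^{v}$, whose regularity is supplied by non-degeneracy. The difference is in the packaging: the paper verifies Kato's regularity criterion directly on $\mcl{Y}$, identifying the characteristic monoid at a point $y\in H\cap\mcl{Y}_k$ (namely $(\sigma^{\vee}\cap\Z^n)^{\sharp}$, enlarged by the class of the strict-transform equation when $y$ lies on it) and checking that its maximal ideal cuts out the orbit, resp.\ the orbit intersected with the strict transform, which is regular of the expected codimension; you instead prove that $(Y,E+B)$ is a log regular pair over $k$ and transfer log regularity along the strict morphism $\mcl{Y}\to Y$. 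That transfer rests on two facts you assert but do not justify: that log regularity is preserved under strict base change along a regular morphism, and that the pullback log structure coincides with the divisorial log structure of $\mcl{Y}_k+H$ (true, since the supports agree and a regular log scheme carries the divisorial log structure of its boundary by \cite[11.6]{kato}); both are routine, and your closing chart computation is in any case exactly the paper's direct check, so the detour through $Y$ can be dispensed with. Two minor points: what is actually needed on each orbit is that $f_{\gamma}/x^{v}$ has no critical points on its zero locus there, which is implied by (rather than literally equivalent to) the non-degeneracy hypothesis for $\gamma$ --- the same implication the paper invokes --- and the remark that $E$ appears in $\mcl{Y}_k$ with multiplicity one is true but plays no role in the proof.
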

  \begin{proof}
By Proposition \ref{prop:regsm}, we only need to show that $\loga{Y}$ is regular. Since $f$ has no critical points
on $\mathbb{G}^n_{m,k}$ by the non-degeneracy assumption, we only need to check regularity at the points $y$ on $H\cap \mcl{Y}_k$.
  Let $\sigma$ be the cone of $\Sigma$ such that $y$ lies on
the associated torus orbit $O(\sigma)$ in $Y$, and denote by $\gamma$ the face of $\Gamma(f)$ corresponding to $\sigma$.
We denote by  $M$ the characteristic monoid $M^{\sharp}_{\loga{Y},y}$ of $\loga{Y}$ at $y$.

If $y$ does not lie in the closure of $\mathrm{div}(f\circ h)\cap \mathbb{G}_{m,k}^n$, then locally around $y$, the log structure on $\loga{Y}$ is
the pullback of the natural log structure on the toric variety $Y$, which was described in Example \ref{exam:toric}. Thus $\loga{Y}$ is fine and saturated, because the divisorial log structure on $Y$ induced by the toric boundary has these properties.
 It also follows that $M=(\sigma^{\vee}\cap \Z^n)^{\sharp}$ so that $M^{\gp}$ has rank $r=\mathrm{dim}(\sigma)$. Locally at $y$, the maximal ideal of $M_{\loga{Y},y}$
 defines the toric orbit $O(\sigma)$, which is regular of codimension $r$ in $\mcl{Y}$. Thus $\loga{Y}$ is regular at $y$.

 Now suppose that $y$ lies in the schematic closure of $\mathrm{div}(f\circ h)\cap \mathbb{G}_{m,k}^n$.
   If $v$ is any lattice point on $\gamma+(\sigma^{\vee}\cap \Z^n)^{\times}$, then locally at $y$ this schematic closure is the zero locus of $f'=(f_{\gamma}/x^{v})+g$ where $g$ is an element of $\mathcal{O}_{\mcl{Y},y}$ that vanishes along $O(\sigma)$. The monoid $M$ is a submonoid of the monoid $\mathcal{O}_{\mcl{Y},y}/\mathcal{O}^{\times}_{\mcl{Y},y}$ of Cartier divisors on $\Spec \mathcal{O}_{\mcl{Y},y}$, generated by
   $(\sigma^{\vee}\cap \Z^n)^{\sharp}$ and $\mathrm{div}(f')$. The morphism of monoids
   $$ (\sigma^{\vee}\cap \Z^n)^{\sharp}\oplus \N\to M $$ that acts as the identity on the first summand and sends $(0,1)$ to  $\mathrm{div}(f')$ is an isomorphism.
    In particular, $M^{\gp}$ has rank $r=\mathrm{dim}(\sigma)+1$. Locally at $y$, the log scheme $\loga{Y}$ has a chart of the form $\loga{Y}\to \Spec^{\dagger}\Z[M]$ where the morphism of monoids $$M=(\sigma^{\vee}\cap \Z^n)^{\sharp}\oplus \N\to \mathcal{O}_{\mcl{Y},y}$$ maps $(m,1)$ to $\chi^m f'$. Here we denote by $\chi^m$ the pullback to $\mcl{Y}$ of the character of $Y$ associated with $m$. Since $M$ is fine and saturated, it follows that $\loga{Y}$ is fine and saturated, as well.

  Locally at $y$, the closed subscheme $Z$ of $\mcl{Y}$ defined by the maximal ideal of $M_{\loga{Y},y}$
 coincides with the schematic intersection of $O(\sigma)$ with $\mathrm{div}(f')$.
  Since $O(\sigma)$ is canonically isomorphic to $\Spec k[(\sigma^{\vee}\cap \Z^n)^{\times}]$ and $Z$ is the closed subscheme of $O(\sigma)$ defined by $f_{\gamma}/x^{v}$, the assumption that $f_{\gamma}$ has no critical points in $\mathbb{G}^n_{m,k}$ now implies that $Z$ is regular at $y$ of codimension $r$ in $\mathcal{Y}$. Hence, $\loga{Y}$ is regular at $y$.
  \end{proof}

In order to write down an explicit expression for the motivic zeta function $Z_f(T)$, we need to introduce some further notation.
 We set $X_0=f^{-1}(0)$. We define piecewise affine functions $N$ and $\nu$ on $\Sigma$ by setting
 \begin{eqnarray*}
 N(u)&=&\min\{u(m) \,|\,m \in \Gamma(f)\}
 \\ \nu(u)&=&u_1+\ldots+u_n
 \end{eqnarray*}
 for every $u$ in $\R^n$.
 For every face $\gamma$ of $\Gamma(f)$, we denote by $\sigma_{\gamma}$ the associated cone in the dual fan $\Sigma$ and by $\mathring{\sigma}_\gamma$ its relative interior.
  We write $O(\sigma_\gamma)$ for the torus orbit of $Y$ corresponding to $\sigma_\gamma$. We denote by $M_{\gamma}$ the
 fine and saturated monoid $\sigma_{\gamma}^{\vee}\cap \Z^n$.
 Since $\sigma_{\gamma}$ is contained in $(\R_{\geq 0})^n$, the dual cone $\sigma_\gamma^{\vee}$ contains $(\R_{\geq 0})^n$ and, in particular, the face $\gamma$. If $v$ is any point on the relative interior of $\gamma$, then the cone $\sigma_\gamma^{\vee}$ is generated by the vectors of the form $v'-v$ wit $v'$ in $\Gamma(f)$. 
    The group of invertible elements in $\sigma^{\vee}_{\gamma}$ coincides with the subspace of $\R^n$ generated by the translated face $\gamma-v$, and $M_{\gamma}^{\times}=(\sigma^{\vee}_{\gamma})^{\times}\cap \Z^n$. 
 Thus the image of $\gamma\cap \Z^n$ under the projection $M_\gamma\to M_\gamma^{\sharp}$ consists of a unique point, which we denote by $v_\gamma$.

 We write $X_{\gamma}(0)$ for the closed subscheme of $\mathbb{G}_{m,k}^n$ defined by $f_{\gamma}$, endowed with the trivial $\widehat{\mu}$-action. We view $X_{\gamma}(0)$ as a scheme over $X_0$ {\em via} the composition
 $$X_\gamma(0)\subset \Spec k[\Z^n]\to \Spec k[M_\gamma^{\times}]\cong O(\sigma_\gamma)\subset Y\to \A^n_k.$$
Note that the morphism
$X_\gamma(0)\to \A^n_k$ factors through $X_0$, because the image of $X_\gamma(0)$ in $\Spec k[M_\gamma^{\times}]$ is precisely the intersection of $O(\sigma_\gamma)$ with the strict transform of $\mathrm{div}(f)$, by the proof of Proposition \ref{prop:nondeg-smooth}.

 We also define an $X_0$-scheme $X_{\gamma}(1)$ with a good $\widehat{\mu}$-action, as follows. The element $v_\gamma$ of $M_\gamma^{\sharp}$ equals $0$ if and only if $O(\sigma_\gamma)$ is not contained in the zero locus of $f\circ h$. In that case, we set $X_\gamma(1)=\emptyset$.
  Otherwise, we can write $v_\gamma=\rho v_\gamma^{\mathrm{prim}}$ for a unique positive integer $\rho$ and a unique primitive vector $v_\gamma^{\mathrm{prim}}$ in $M_\gamma^{\sharp}$. We choose an element $w$ in $\Z^n$ such that $\langle m,w\rangle=\rho$ for every point $m$ on $\gamma$.
   We define $X_\gamma(1)$ to be the closed subscheme of $\mathbb{G}_{m,k}^n$ defined by the equation $f_\gamma=1$. We consider the left $\mu_{\rho}$-action on $\mathbb{G}^n_{m,k}$ with weight vector $w$:
   $$\zeta\ast (x_1,\ldots,x_n)=(\zeta^{w_1}x_1,\ldots,\zeta^{w_n}x_n).$$ The subscheme $X_{\gamma}(1)$ is stable under this action, and thus inherits a left $\mu_{\rho}$-action from $\mathbb{G}^n_{m,k}$.
    We again view $X_\gamma(1)$ as an $X_0$-scheme {\em via} the composition
    $$X_\gamma(1)\subset \Spec k[\Z^n]\to \Spec k[M_\gamma^{\times}]\cong O(\sigma_\gamma)\to X_0.$$ The resulting morphism
    $X_\gamma(1)\to X_0$ is $\mu_{\rho}$-equivariant with respect to the trivial action on $X_0$.
  In formula \eqref{eq:nondeg} in the proof of Theorem \ref{thm:nondeg}, we will give an alternative expression for the class of $X_\gamma(1)$ in $K^{\widehat{\mu}}_0(\Var_{X_0})$ that implies that this class does not depend on the choice of the weight vector $w$.
%  To see this, let $L$ be the saturated sublattice of $\Z^n$ generated by $\gamma\cap \Z^n$, and %denote by $X'_{\gamma}(1)$ the closed subscheme of
%  $\Spec k[L]$ defined by the equation $f_{\gamma}=1$. 
%  The torus $\Spec \Z[L]$ is a quotient of $\mathbb{G}_{m,k}^n$
%   because $X_\gamma(1)$ is stable under the action of the subtorus
%$\Spec [\Z^n/L]$  of $\mathbb{G}_{m,k}^n$, where $L$ is the saturated sublattice of $\Z^n$ %generated by $\gamma\cap \Z^n$.

  \begin{thm}\label{thm:nondeg}
  Let $f$ be a non-constant polynomial in $k[x_1,\ldots,x_n]$
  such that $f$ vanishes at the origin and $f$ is non-degenerate with respect to its Newton polyhedron $\Gamma(f)$.
  Set $X_0=f^{-1}(0)$.
Then the motivic zeta function $Z_f(T)$ can be written as
$$\sum_{\gamma}\left(\left([X_{\gamma}(0)]\frac{\LL^{-1}T}{1-\LL^{-1}T}+[X_{\gamma}(1)]\right)\sum_{u\in \mathring{\sigma}_{\gamma}\cap \N^n}\LL^{-\nu(u)}T^{N(u)}\right)$$
in $\mathcal{M}^{\widehat{\mu}}_{X_0}\llb T \rrb$, where the sum is taken over all the faces $\gamma$ of $\Gamma(f)$.
  \end{thm}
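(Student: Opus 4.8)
\emph{Proof strategy.} The plan is to apply the equivariant Theorem~\ref{thm:equiv} to the log smooth model $\logs{Y}$ produced by Proposition~\ref{prop:nondeg-smooth}, and then read off the combinatorics. First one records the required input: $\mcl Y_K$ is smooth over $K$ (generic smoothness in characteristic zero), and $\omega=\phi/df$ with $\phi=dx_1\wedge\cdots\wedge dx_n$ is the Gelfand--Leray form, which is a volume form on $\mcl Y_K$; moreover $h\colon\mcl Y\to\mcl X=\A^n_k\times_{k[t]}R$ is proper and $h_K$ is an isomorphism. The last point is the only preliminary needing a short argument: the toric modification $Y\to\A^n_k$ fails to be an isomorphism only over the torus orbits $O(\tau)$ of those cones $\tau$ in the fan of $\A^n_k$ that are subdivided by the normal fan $\Sigma$ of $\Gamma(f)$, and a direct combinatorial check, using $0\notin\Gamma(f)$, shows that $O(\tau)\subseteq X_0=f^{-1}(0)$ for every such $\tau$ (concretely: $\tau=\mrm{cone}(e_i:i\in J)$ is a cone of $\Sigma$ as soon as $f$ has a monomial supported off $J$). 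Since the image of $\mcl X_K$ in $\A^n_k$ is disjoint from $X_0$, this gives that $h_K$ is an isomorphism. By the discussion of Section~\ref{sec:DL} we then have $Z^{\widehat{\mu}}_{\mcl Y,\omega}(T)=Z_f(\LL T)$, so $Z_f(T)$ is obtained from Theorem~\ref{thm:equiv} applied to $\logs Y$ by the substitution $T\mapsto T/\LL$; concretely this replaces each exponent $u(\omega)$ by $u(\omega)+u(e_t)$.

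Next I would classify the vertical points of $F(\logs Y)$. The reduced boundary of $\logs Y$ is the toric boundary of $Y$ together with the strict transform $E_f$ of $\divr(f)$, and faces $\gamma$ of $\Gamma(f)$ correspond (inclusion-reversingly) to cones $\sigma_\gamma$ of $\Sigma$. For each $\gamma$ there are two kinds of vertical strata, described by the local computation in the proof of Proposition~\ref{prop:nondeg-smooth}: the generic point of $O(\sigma_\gamma)$, of rank $r(\gamma):=\dim\sigma_\gamma$ with characteristic monoid $M_\gamma^\sharp$, which occurs exactly when $O(\sigma_\gamma)\subseteq\mcl Y_k$, i.e.\ when $v_\gamma\neq 0$; and, for each irreducible component of $E_f\cap O(\sigma_\gamma)=\{f_\gamma/x^{v}=0\}\subseteq O(\sigma_\gamma)$, a stratum of rank $r(\gamma)+1$ with characteristic monoid $M_\gamma^\sharp\oplus\N$. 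Here non-degeneracy of $f$ is used in an essential way beyond Proposition~\ref{prop:nondeg-smooth}: it forces $\{f_\gamma=0\}$ to be smooth in $\mathbb G_{m,k}^n$, hence its branches are pairwise disjoint and no stratum of rank $r(\gamma)+2$ occurs over $O(\sigma_\gamma)$. One then computes, for $u\in(M_\gamma^\sharp)^{\vee,\loc}=\mathring\sigma_\gamma\cap\N^n$, that $u(e_t)=N(u)$ and $u(\omega)=\nu(u)-N(u)$ --- the standard toric identification of the multiplicities of $\mcl Y_k$ and of $\omega_{\logs Y/S^\dagger}$ along the toric divisors with the piecewise-linear functions $N$ and $\nu-N$ --- while the extra $E_f$-direction contributes $1$ to $e_t$ and $0$ to $\omega$ (since $E_f$ is not exceptional, its multiplicity in $\divr_{\logs Y}(\omega)$ is $1-1=0$). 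After the substitution $T\mapsto T/\LL$ the exponent of $\LL$ becomes $\nu(u)$ in the first case and $\nu(u)+j$ (with $j\geq 1$ the $E_f$-weight) in the second, so the rank-$(r(\gamma)+1)$ strata for $\gamma$ contribute a factor $\sum_{j\geq 1}\LL^{-j}T^j=\LL^{-1}T/(1-\LL^{-1}T)$.

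It remains to identify the covers $\wtl E(\tau)^o$ with the prescribed $X_0$-schemes. For the rank-$r(\gamma)$ stratum the root index is the integer $\rho$ with $v_\gamma=\rho v_\gamma^{\mathrm{prim}}$, and $\wtl E(\tau)^o$ is a $\mu_\rho$-cover of $O(\sigma_\gamma)\setminus E_f$; using the explicit Kummer description of fs base change as in Example~\ref{exam:torsor} and \cite[\S2.3]{Ni-tameram}, this cover is $\mu_\rho$-equivariantly $X_0$-isomorphic to $X_\gamma(1)/\Spec k[\Z^n/L]$, where $L$ is the saturated span of $\gamma\cap\Z^n$. Since $X_\gamma(1)\to X_\gamma(1)/\Spec k[\Z^n/L]$ is a $\mathbb G_{m,k}^{\,r(\gamma)-1}$-torsor on which $\mu_\rho$ acts trivially on the torus, Proposition~\ref{prop:eqtor} yields $[X_\gamma(1)]=(\LL-1)^{r(\gamma)-1}[\wtl E(\tau)^o]$ in $\mathcal M^{\widehat{\mu}}_{X_0}$ (both sides vanish when $v_\gamma=0$). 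For the rank-$(r(\gamma)+1)$ strata the root index is $1$ (the $E_f$-weight of $e_t$ is $1$), so $\wtl E(\tau)^o=E(\tau)^o$; and since $\{f_\gamma=0\}\cap\mathbb G_{m,k}^n=X_\gamma(0)$ is a $\Spec k[\Z^n/M_\gamma^\times]=\mathbb G_{m,k}^{\,r(\gamma)}$-torsor over $\{f_\gamma/x^{v}=0\}\cap O(\sigma_\gamma)=\bigsqcup_\tau E(\tau)^o$ (using disjointness of the branches), one gets $[X_\gamma(0)]=(\LL-1)^{r(\gamma)}\sum_\tau[\wtl E(\tau)^o]$, with trivial $\widehat{\mu}$-action. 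Substituting these two identities into the sum over vertical points collapses all powers of $\LL-1$ and leaves exactly $\sum_\gamma\bigl([X_\gamma(0)]\,\tfrac{\LL^{-1}T}{1-\LL^{-1}T}+[X_\gamma(1)]\bigr)\sum_{u\in\mathring\sigma_\gamma\cap\N^n}\LL^{-\nu(u)}T^{N(u)}$.

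The hard part will be the last paragraph: matching $\wtl E(\tau)^o$ with $X_\gamma(1)/\Spec k[\Z^n/L]$ and with (the strata of) $X_\gamma(0)$ as $X_0$-schemes carrying the correct $\widehat{\mu}$-action, and keeping precise track of the stratification of $E_f\cap O(\sigma_\gamma)$ and of the structure morphisms to $X_0$. This is exactly the point where the bookkeeping is delicate and where \cite{guibert} contains the inaccuracies corrected in Remark~\ref{rem:guibert}. By comparison, the toric computation of $u(\omega)$ and $u(e_t)$, and the verification that $h_K$ is an isomorphism, are routine but still need to be done carefully.
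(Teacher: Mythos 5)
Your proposal is correct and follows essentially the same route as the paper: apply Theorem \ref{thm:equiv} to the log smooth model of Proposition \ref{prop:nondeg-smooth}, classify the vertical fan points face by face, compute $u(e_t)=N(u)$ (plus the weight along the strict transform) and $u(\omega)=\nu(u)-N(u)$, and convert $[\wtl{E}(\tau)^o]$ into $[X_\gamma(0)]$ and $[X_\gamma(1)]$ via torus torsors and Proposition \ref{prop:eqtor}. The identification you flag as the hard part is done in the paper exactly as you indicate: $\wtl{E}(\tau)^o$ is written explicitly as the Kummer cover $\Spec k[M_\gamma^{\times},T,T^{-1}]/((f_\gamma/x^{v})-T^{\rho})$, and the map $X_\gamma(1)\to\wtl{E}(\tau)^o$ sending $T$ to $x^{-v/\rho}$ is checked to be a $\mu_\rho$-equivariant torsor under $\Spec \Z[\Z^n/L]$ with trivial action on the translation torus.
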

  \begin{proof}
  We will explain how this can be interpreted as a special case of Theorem \ref{thm:main}.
%   Beware of an unfortunate clash of notations: in Theorem \ref{thm:main} we used $\tau$ to denote points of the fan of the log scheme $\loga{X}$, while here we use
%   $\tau$ to denote faces of $\Gamma(f)$, as is traditionally done in the literature. We will use the symbol $y$ for points of the fan of $\loga{Y}$.
     We set $$\omega=\frac{dx_1\wedge \ldots \wedge dx_n}{df},$$
   viewed as a volume form on $\mcl{Y}_K$.

 We denote by $D$ the logarithmic boundary divisor on $\loga{Y}$. By the definition of a non-degenerate polynomial, the divisor
 $\mathrm{div}(f\circ h)\cap \mathbb{G}^n_{m,k}$ in the dense torus of $Y$ is smooth. We denote by $D'$ the restriction to $\mcl{Y}$ of the schematic closure of this divisor in $Y$.
 Then $D$ is the sum of $D'$
 and the restriction to $\mcl{Y}$ of the toric boundary on $Y$.
  By the description of the logarithmic strata on a regular log scheme in Section \ref{sec:fans},
the logarithmic strata of $\loga{Y}$ are precisely the sets $O(\sigma)\setminus D'$ and
the connected components of $O(\sigma)\cap D'$, where $\sigma$ ranges through the cones of $\Sigma$.
 Like on any regular log scheme, the points of the fan $F(\loga{Y})$ are the generic points of these logarithmic strata.

 % For every face $\gamma$ of $\Gamma(f)$, we denote by $O(\sigma_{\gamma})$ the torus orbit of $Y$ corresponding to the cone $\sigma_{\gamma}$ of $\Sigma$. The projection morphism %$$(\sigma_{\gamma}^{\vee}\cap \Z^n)\to (\sigma_{\gamma}^{\vee}\cap \Z^n)^{\sharp}$$ maps $\gamma$ to a single point, which we denote by $v$.

      Let $\gamma$ be a face of $\Gamma(f)$, and denote by $\sigma_{\gamma}$ the corresponding cone in the dual fan $\Sigma$. The points in $F(\loga{Y})_k$ that lie on $O(\sigma_{\gamma})$ are the generic points of $O(\sigma_{\gamma})\cap D'$ and, provided that $v_\gamma\neq 0$, also the generic point of  $O(\sigma_{\gamma})$.
 Let $\tau$ be a point of $F(\loga{Y})_k\cap O(\sigma_{\gamma})$ and set $M=M^{\sharp}_{\loga{Y},\tau}$. As usual, we write $e_t$ for the class of $t=f\circ h$ in $M$.

If $\tau$ lies on $D'$ then we have $M\cong M_\gamma^{\sharp}\oplus \N$ and this yields an isomorphism $$M^{\vee,\loc}\cong  (\mathring{\sigma}_{\gamma}\cap \N^n)\oplus (\N\setminus \{0\}).$$
 The element $e_t$ of $M$ corresponds to $(v_\gamma,1)$, because $D'$ is reduced. It follows that the root index of $\N\to M:1\mapsto e_t$ equals $1$, so that $\widetilde{E}(\tau)^o=E(\tau)^o$. This is the connected component of $O(\sigma_{\gamma})\cap D'$ that contains $\tau$. By the scissor relations in the Grothendieck ring, we have
 $$[O(\sigma_\gamma)\cap D']=\sum_{\tau \in F(\loga{Y})\cap O(\sigma_\gamma)\cap D'}[E(\tau)^o]$$ in $K_0(\Var_{X_0})$.

 We have seen in the proof of Proposition \ref{prop:nondeg-smooth} that $X_\gamma(0)$ is isomorphic to $(O(\sigma_{\gamma})\cap D')\times_k \mathbb{G}^{\mathrm{dim}(\sigma_\gamma)}_{m,k}$, so that
 $$[X_\gamma(0)]=[O(\sigma_\gamma)\cap D'](\LL-1)^{\dim(\sigma_\gamma)}$$ in $K_0(\Var_{X_0})$.
  Moreover, for every element $u'=(u,n)$ in $M^{\vee,\loc}$ we have $u'(e_t)=u(v_\gamma)+n=N(u)+n$ and $u'(\omega)=\nu(u)-N(u)$.
  Thus the contribution of $F(\loga{Y})_k\cap O(\sigma_{\gamma})\cap D'$ to the formula for $Z_f(T)=Z^{\widehat{\mu}}_{\mcl{Y},\omega}(\LL^{-1} T)$ in Theorem \ref{thm:main} is equal to
 $$[X_{\gamma}(0)]\frac{\LL^{-1}T}{1-\LL^{-1}T}\sum_{u\in \mathring{\sigma}_{\gamma}\cap \N^n}\LL^{-\nu(u)}T^{N(u)}.$$

  If $\tau$ does not lie on $D'$, then 
$E(\tau)^o=O(\sigma_{\gamma})\setminus D'$, and   
  $M$ is canonically isomorphic to $M_\gamma^{\sharp}$ so that we can identify $M^{\vee,\loc}$ with $\mathring{\sigma}_{\gamma}\cap \N^n$. The element $e_t$ of $M$ is equal to $v_\gamma$, so that $u(e_t)=u(v_\gamma)=N(u)$ for every $u$ in $\sigma_{\gamma}\cap \N^n$. We also have $u(\omega)=\nu(u)-N(u)$.
   Thus, in order to match the formula in the statement of the theorem with the one in Theorem \ref{thm:main}, it suffices to show that
   \begin{equation}\label{eq:nondeg}
   [X_\gamma(1)]=[\widetilde{E}(\tau)^o](\LL-1)^{n-\mathrm{dim}(\gamma)-1}
    \end{equation}
    in $K^{\widehat{\mu}}_0(\Var_{X_0})$. We write $v_\gamma=\rho v_\gamma^{\mathrm{prim}}$ for a positive integer $\rho$ and a  primitive vector $v_\gamma^{\mathrm{prim}}$ in $M_\gamma^{\sharp}$. Then $\rho$ is the root index of the morphism of monoids $\N\to M_\gamma^{\sharp}$ that maps $1$ to $e_t$.
 The torus orbit $O(\sigma_{\gamma})$ is canonically isomorphic to $\Spec k[M_\gamma^{\times}]$, and,  
  locally at every point of $O(\sigma_\gamma)$, we can write $f\circ h$ as
 $x^{v}((f_{\gamma}/x^{v})+g)$ where $v$ is any lattice point on $\gamma+(\sigma^{\vee}_\gamma)^{\times}$ and $g$ is a regular function on $\mcl{Y}$ that vanishes along $O(\sigma_\gamma)$. The intersection $O(\sigma_{\gamma})\cap D'$ is the zero locus of $f_{\gamma}/x^{v}$.
      We can choose $v$ in such a way that it is divisible by $\rho$, because $v_\gamma$ is divisible by $\rho$.
  Now it easily follows from the definition that $\widetilde{E}(\tau)^o$ is the cover of $E(\tau)^o$ defined by taking a $\rho$-th root of the unit $f_{\gamma}/x^{v}$:
 $$\widetilde{E}(\tau)^o\cong \Spec k[M_\gamma^{\times},T,T^{-1}]/((f_\gamma/x^{v})-T^{\rho}).$$ The group scheme $\mu_\rho$ acts on $\widetilde{E}(\tau)^o$ from the left by the inverse of multiplication  on $T$, that is, $T \ast \zeta=\zeta^{-1}T$.

 We
 define a $\mu_\rho$-equivariant morphism of $X_0$-schemes $X_\gamma(1)\to \widetilde{E}(\tau)^o$ by means of
 the morphism of $k$-algebras
 $$ k[M_\gamma^{\times},T,T^{-1}]/((f_\gamma/x^{v})-T^{\rho})\to k[\Z^n]/(f_\gamma-1)$$ that maps $T$ to $x^{-v/\rho}$ and
  that maps $x^m$ to itself, for every $m\in M_\gamma^{\times}$.
   The morphism $X_\gamma(1)\to \widetilde{E}(\tau)^o$ is an equivariant torsor
   with translation group $\Spec \Z[V^{\bot}_{\gamma}\cap \Z^n]$, where $V_{\gamma}$ is the sub-vector space of $\R^n$ generated by $\gamma$, and $V^{\bot}_{\gamma}$ denotes the orthogonal subspace. 
   The equality \eqref{eq:nondeg} now follows from Proposition \ref{prop:eqtor}.
    \end{proof}

\begin{rmk}\label{rem:guibert}
The calculation of $Z_f(T)$ in \cite[\S2.1]{guibert} contains the following flaws:
 the $\widehat{\mu}$-action on the schemes $X_\gamma(1)$ is ill-defined; the term involving $[X_\gamma(1)]$ should be omitted if $v_\gamma=0$; the factor $(\LL-1)$ after $[X_\gamma(0)]$ should be omitted; the $X_0$-scheme structure on $X_\gamma(0)$ and $X_\gamma(1)$ is not specified.
\end{rmk}

  \begin{cor}\label{cor:nondeg}
    Let $f$ be a non-constant polynomial in $k[x_1,\ldots,x_n]$
  such that $f$ vanishes at the origin and $f$ is non-degenerate with respect to its Newton polyhedron $\Gamma(f)$.
   Denote by $R(f)$ the set of primitive generators of the rays of the dual fan of $\Gamma(f)$ (that is, the inward pointing primitive normal vector on the facets of
   $\Gamma(f)$).
Then the set
$$\{-1\}\cup \{-\frac{\nu(u) }{N(u)}\,|\,u\in R(f), \,N(u)\neq 0 \}$$
is a set of candidate poles of $Z_f(T)$.
  \end{cor}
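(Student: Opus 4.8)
The plan is to deduce the statement from Corollary~\ref{cor:monopoles}, applied to the log smooth model $\loga{Y}$ furnished by Proposition~\ref{prop:nondeg-smooth}, by computing the numerical data of the special fiber and then transporting the resulting set of candidate poles through the identity $Z_f(T)=Z^{\widehat{\mu}}_{\mcl{Y},\omega}(\LL^{-1}T)$, where $\omega=(dx_1\wedge\cdots\wedge dx_n)/df$ is the Gelfand-Leray form used in the proof of Theorem~\ref{thm:nondeg}. By Proposition~\ref{prop:nondeg-smooth} the log scheme $\loga{Y}$ is smooth over $S^{\dagger}$; it is of finite type and (as in the proof of Theorem~\ref{thm:nondeg}) $\mcl{Y}_K$ is smooth over $K$, and $k$ has characteristic zero, so Corollary~\ref{cor:monopoles} is available.

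First I describe the special fiber $\mcl{Y}_k=(f\circ h)^{-1}(0)$. Its prime components are of two kinds: the strict transforms of the \emph{non-coordinate} prime components of $\mathrm{div}(f)$, and the torus-invariant divisors $E_u$ of $Y$ attached to those rays $u$ of the dual fan $\Sigma$ with $N(u)>0$ (this second family absorbs the coordinate hyperplanes $\{x_j=0\}$, which occur exactly when $N(e_j)>0$, then with multiplicity the largest power of $x_j$ dividing $f$). The primitive generators of the rays of $\Sigma$ are by definition the elements of $R(f)$. Moreover, non-degeneracy of $f$ forces $\mathrm{div}(f)$ to be reduced away from the coordinate hyperplanes, since a non-monomial repeated factor of $f$ would make $\partial f/\partial x_i=\partial f_{\Gamma(f)}/\partial x_i$ vanish along that factor's nonempty zero locus in $\mathbb{G}_{m,k}^n$; hence each component of the first kind has multiplicity $1$ in $\mcl{Y}_k$ and discrepancy $0$ for $h$.

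Next I read off the numerical data and apply Corollary~\ref{cor:monopoles}. For the Gelfand-Leray form the multiplicity of a component $E$ of $\mcl{Y}_k$ in $\mathrm{div}_{\loga{Y}}(\omega)$ equals $\nu_E-N_E$, with $N_E$ the multiplicity of $E$ in $h^{*}\mathrm{div}(f)$ and $\nu_E-1$ its discrepancy for $h$ (the enlargement of the boundary to $\mcl{Y}_k+H$ does not affect this, as is verified through the computation $u(\omega)=\nu(u)-N(u)$ on the monoids $M^{\sharp}_{\loga{Y},\tau}$ in the proof of Theorem~\ref{thm:nondeg}). For $E=E_u$ with $u\in R(f)$ and $N(u)>0$, the toric formula $K_{Y/\A^n_k}=\sum_u(\nu(u)-1)E_u$ yields $N_{E_u}=N(u)$ and $\nu_{E_u}=\nu(u)$, so $E_u$ contributes the candidate pole $-(\nu(u)-N(u))/N(u)$; each component of the first kind has $N_E=\nu_E=1$ and contributes $0$. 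By Corollary~\ref{cor:monopoles} --- and since a superset of a set of candidate poles is again one --- the set $\{0\}\cup\{-(\nu(u)-N(u))/N(u)\mid u\in R(f),\,N(u)\neq 0\}$ is a set of candidate poles for $Z^{\widehat{\mu}}_{\mcl{Y},\omega}(T)$.

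Finally, the substitution $T\mapsto\LL^{-1}T$ turns a denominator $1-\LL^{a}T^{b}$ into $1-\LL^{a-b}T^{b}$, hence shifts every element of a set of candidate poles by $-1$; applied to $Z_f(T)=Z^{\widehat{\mu}}_{\mcl{Y},\omega}(\LL^{-1}T)$ it sends $0$ to $-1$ and $-(\nu(u)-N(u))/N(u)$ to $-\nu(u)/N(u)$, giving precisely $\{-1\}\cup\{-\nu(u)/N(u)\mid u\in R(f),\,N(u)\neq 0\}$. The step that needs the most care is the bookkeeping of the multiplicities $N_E$ and discrepancies $\nu_E-1$ of \emph{all} components of $\mcl{Y}_k$ --- in particular the use of non-degeneracy to see that the strict transform of $\mathrm{div}(f)$ produces nothing beyond $-1$, and the correct treatment of non-reduced coordinate multiplicities --- together with the $\LL^{-1}T$-shift. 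Alternatively, one can argue entirely within the formula of Theorem~\ref{thm:nondeg}: the factor $\LL^{-1}T/(1-\LL^{-1}T)$ accounts for the candidate pole $-1$, while Lemma~\ref{lemm:cone}, applied to $M^{\sharp}_{\loga{Y},\tau}$ at a vertical point $\tau$ of $O(\sigma_\gamma)$ whose relevant ray generators lie in $R(f)$, together with the torsor identities $[X_\gamma(1)]=(\LL-1)^{n-\dim\gamma-1}[\wtl{E}(\tau)^{o}]$ and its counterpart for the $[X_\gamma(0)]$-term (which absorb the $(\LL-1)$-prefactors produced by Lemma~\ref{lemm:cone}), shows that the remaining contributions carry only the candidate poles $-\nu(u)/N(u)$ with $u\in R(f)$ and $N(u)\neq 0$.
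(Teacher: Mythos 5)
Your argument is correct, but your primary route differs from the paper's. The paper disposes of Corollary \ref{cor:nondeg} in one line: it reads the candidate poles directly off the explicit formula of Theorem \ref{thm:nondeg}, exactly as in the proof of Proposition \ref{prop:poles} (the factor $\LL^{-1}T/(1-\LL^{-1}T)$ gives $-1$, and Lemma \ref{lemm:cone} applied to the cones $\sigma_\gamma$ gives only the denominators $1-\LL^{-\nu(u)}T^{N(u)}$ attached to ray generators $u\in R(f)$ with $N(u)\neq 0$, the rays with $N(u)=0$ being harmless because $\nu(e_j)=1$). Your "alternative" closing paragraph is precisely this. Your main route instead bypasses Theorem \ref{thm:nondeg}: you apply Corollary \ref{cor:monopoles} to the log smooth model $\loga{Y}$ of Proposition \ref{prop:nondeg-smooth}, which obliges you to identify all prime components of $\mcl{Y}_k$ and their numerical data --- the toric divisors $E_u$ with $(N_{E_u},\,\mathrm{mult}_{E_u}\mathrm{div}_{\loga{Y}}(\omega))=(N(u),\nu(u)-N(u))$, and the strict transform of the non-monomial part of $\mathrm{div}(f)$, whose reducedness you correctly extract from non-degeneracy with respect to the full face $\gamma=\Gamma(f)$ --- and then to transport the resulting candidate-pole set through the shift $T\mapsto\LL^{-1}T$ in $Z_f(T)=Z^{\widehat\mu}_{\mcl{Y},\omega}(\LL^{-1}T)$, using that a superset of a candidate-pole set is again one. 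This is a valid and somewhat more self-contained derivation (it does not need the full strength of the explicit formula, only the candidate-pole corollary), at the price of the divisorial bookkeeping; note only that the identity "multiplicity of $E$ in $\mathrm{div}_{\loga{Y}}(\omega)$ equals $\nu_E-N_E$" is stated in Section \ref{sec:DL} for snc log resolutions, so strictly speaking you should justify it here by the codimension-one computation at the generic points of the vertical components (or, as you indicate, by the identity $u(\omega)=\nu(u)-N(u)$ established in the proof of Theorem \ref{thm:nondeg}), since $\mcl{Y}$ is neither regular nor snc in general.
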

  \begin{proof}
  This follows from Theorem \ref{thm:nondeg} in the same way as in the proof of Proposition \ref{prop:poles}.
  \end{proof}

 Note that this set of candidate poles is substantially smaller than the set of candidates we would get from a toric log resolution of $(\A^n_k,X_0)$: the latter set would include
 the candidate poles associated with all the rays in a regular subdivision of the dual fan of $\Gamma(f)$. An analogous result for Igusa's $p$-adic zeta function was proven in \cite{denef-hoorn}.

 The same method of proof yields similar results for the local motivic zeta function $Z_{f,O}(T)$ of $f$ at the origin $O$ of $\A^n_k$.
  This zeta function is defined as
 the image of $Z_f(T)$ under the base change morphism $\mathcal{M}^{\widehat{\mu}}_{X_0}\to \mathcal{M}^{\widehat{\mu}}_{O}=\mathcal{M}^{\widehat{\mu}}_{k}$.
 In fact, we only need to
 assume that $f$ is non-degenerate with respect to the {\em compact} faces of its Newton polyhedron. This means that for every  compact face $\gamma$ of $\Gamma(f)$, the polynomial $f_\gamma$ has no critical points in $\mathbb{G}^n_{m,k}$.

 \begin{thm}
  We keep the notations of Theorem \ref{thm:nondeg}, but we replace the non-degeneracy assumption on $f$ by the weaker condition that
 $f$ is non-degenerate with respect to the compact faces of $\Gamma(f)$. Let $O$ be the origin of $\A^n_k$. Then the motivic zeta function $Z_{f,O}(T)$ of $f$ at $O$ can be written as
$$\sum_{\gamma}\left(\left([X_{\gamma}(0)]\frac{\LL^{-1}T}{1-\LL^{-1}T}+[X_{\gamma}(1)]\right)\sum_{u\in \mathring{\sigma}_{\gamma}\cap \N^n}\LL^{-\nu(u)}T^{N(u)}\right)$$
in $\mathcal{M}^{\widehat{\mu}}_{k}$, where the sum is taken over all the compact faces $\gamma$ of $\Gamma(f)$.
 \end{thm}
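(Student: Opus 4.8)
The plan is to run the proof of Theorem~\ref{thm:nondeg}, localised at the fibre over the origin $O$ of $\A^n_k$. By definition $Z_{f,O}(T)$ is the image of $Z_f(T)$ under the base change morphism $\mathcal{M}^{\widehat{\mu}}_{X_0}\to \mathcal{M}^{\widehat{\mu}}_{k}$ along the closed point $O\in X_0$, and two facts make the localisation possible. First, this base change only retains the part of a N\'eron smoothening lying over $O$, so that $Z_{f,O}(T)$ may be computed from any model of $\mcl{X}$ that is a proper modification, an isomorphism on generic fibres, over a Zariski neighbourhood of $\{O\}_{\mcl{X}}$. Second, the proof of Proposition~\ref{prop:nondeg-smooth} establishes log regularity of $\loga{Y}$ at a point $y$ on the orbit $O(\sigma_{\gamma})$ using only the non-degeneracy of $f_{\gamma}$ for that single face $\gamma$; hence the weaker hypothesis of the theorem suffices wherever the face in play is compact.

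The combinatorial input is the identification: for a face $\gamma$ of $\Gamma(f)$, the orbit $O(\sigma_{\gamma})$ of $Y$ is mapped by $h$ into the closed point $O$ of $\A^n_k$ if and only if $\gamma$ is compact. Indeed $h$ sends $O(\sigma_\gamma)$ into the torus orbit of $\A^n_k$ indexed by the smallest coordinate face of $(\R_{\geq 0})^n$ containing $\sigma_\gamma$, and this face is all of $(\R_{\geq 0})^n$ exactly when $\sigma_\gamma$ lies in no coordinate hyperplane, i.e.\ when the recession cone of $\gamma$ is trivial. Therefore $h^{-1}(O)=\bigcup_{\gamma \text{ compact}} O(\sigma_{\gamma})$ is closed, every $\tau\in F(\loga{Y})^{\verti}$ lying on a compact-face orbit has $E(\tau)^o$ contained in that orbit and hence $\widetilde{E}(\tau)^o$ mapping to $\{O\}$, while for $\tau$ on a non-compact-face orbit the scheme $\widetilde{E}(\tau)^o$ maps into $\A^n_k\setminus\{O\}$.

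Next I would choose a Zariski open $\mcl{V}\subseteq \mcl{Y}$ containing $h^{-1}(O)$ and disjoint from the closed set $B$ that is the union of the non-log-smooth locus of $\loga{Y}$ and the singular locus of $\mcl{Y}_K$. This is possible: by the two observations above $B$ is disjoint from $h^{-1}(O)$, and since $h$ is proper, $h(B)$ is closed and does not contain $O$, so $\mcl{V}=h^{-1}(\mcl{X}\setminus h(B))$ works. Then $\loga{V}$ with the restricted log structure is smooth over $S^{\dagger}$ by Proposition~\ref{prop:regsm}, its generic fibre $\mcl{V}_K$ is smooth over $K$, and $\omega$ restricts to a volume form on $\mcl{V}_K$. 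Passing to a regular subdivision of $\Sigma$ (which gives a log resolution of $(\A^n_k,\mathrm{div}(f))$ over a neighbourhood of $O$, since there only the compact faces intervene), computing $Z_{f,O}(T)$ from it by Denef and Loeser's formula as in Section~\ref{sec:DL}, and using invariance under the intervening log modification, one obtains that $Z_{f,O}(T)$ is the image under base change to $O$ of $Z^{\widehat{\mu}}_{\mcl{V},\omega}(\LL^{-1}T)$.

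Finally I would apply Theorem~\ref{thm:equiv} to $\loga{V}$ and base change the formula to $O$. Every $\tau\in F(\loga{Y})^{\verti}$ on a compact-face orbit lies in $\mcl{V}$; the terms indexed by $\tau$ on non-compact-face orbits vanish after base change to $O$, while the terms indexed by $\tau$ on compact-face orbits are unchanged. The surviving sum therefore runs exactly over the $\tau$ sitting over compact faces $\gamma$ of $\Gamma(f)$, and grouping by $\gamma$ and rewriting the contribution of each $\tau$ through Proposition~\ref{prop:eqtor} in terms of $X_\gamma(0)$ and $X_\gamma(1)$ — verbatim the computation in the last part of the proof of Theorem~\ref{thm:nondeg}, which is local at $\tau$ and only uses the non-degeneracy of $f_\gamma$ — yields the asserted expression, summed over the compact faces. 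The main obstacle is the bookkeeping of the third step: one cannot simply invoke invariance under proper modification because $h$ is not an isomorphism on generic fibres, so the argument has to combine the locality of the motivic integral over $O$ with a passage to a regular subdivision that is only a genuine log resolution near $O$.
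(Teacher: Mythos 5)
Your overall architecture is the same as the paper's intended argument: use non-degeneracy with respect to the compact faces to get smoothness of $\loga{Y}$ over $S^\dagger$ at every point of $h^{-1}(O)$, excise the bad locus by an $h$-saturated open neighbourhood $\mcl{V}=h^{-1}(\mcl{U})$ of $h^{-1}(O)$ (using properness of $h$), observe that $O(\sigma_\gamma)\subseteq h^{-1}(O)$ exactly when $\gamma$ is compact, apply Theorems \ref{thm:main}/\ref{thm:equiv} to $\loga{V}$, and base change to $O$, where the non-compact-face strata die and the compact-face strata contribute verbatim as in the proof of Theorem \ref{thm:nondeg}. All of that is correct (one small blemish: your set $B$ includes the singular locus of $\mcl{Y}_K$, which is not obviously closed in $\mcl{Y}$; in fact it is empty in characteristic zero, since the singular locus of the toric variety $Y$ lies in the zero locus of $f\circ h$ and generic smoothness disposes of the rest, so $B$ can simply be taken to be the non-log-smooth locus).

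The genuine problem is in your third step and in the "main obstacle" you flag at the end. The claim that $h$ is not an isomorphism on generic fibres is false: a face $F_I$ of $(\R_{\geq 0})^n$ is properly subdivided by the dual fan $\Sigma$ only if no monomial of $f$ is supported on the variables $x_j$, $j\notin I$ (otherwise the minimum of every $u$ in the relative interior of $F_I$ is $0$, attained on the single face $\Gamma(f)\cap\{m_i=0,\,i\in I\}$, so $F_I\in\Sigma$); hence the non-isomorphism locus of $h$ is a union of coordinate orbits contained in $X_0$, $h$ is an isomorphism over $\{f\neq 0\}$, and $h_K$ \emph{is} an isomorphism. This is precisely what the proof of Theorem \ref{thm:nondeg}, which you are quoting, already relies on when it writes $Z_f(T)=Z^{\widehat{\mu}}_{\mcl{Y},\omega}(\LL^{-1}T)$. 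Once this is noted, your detour is unnecessary: base change the identity $Z^{\widehat{\mu}}_{\mcl{X},\omega}(T)=Z_f(\LL T)$ of Section \ref{sec:DL} to $O$, use the locality of the motivic zeta function for the open immersion $\mcl{U}\subseteq\mcl{X}$, and invoke invariance under the proper modification $\mcl{V}\to\mcl{U}$, which is an isomorphism on generic fibres. The detour as you wrote it is, moreover, not sound on its own terms: it is not true that over a neighbourhood of $O$ "only the compact faces intervene" --- every orbit of $Y'$ has closure meeting $h'^{-1}(O)$, so any neighbourhood of the form $h'^{-1}(U)$ meets all orbits, and you would again need the excision-by-properness argument to get a log resolution over a smaller $U$; in addition, Denef and Loeser's formula requires the modification to be an isomorphism outside $X_0$, so the regular subdivision must be chosen so as not to subdivide the coordinate faces $F_I$ that survive in $\Sigma$, a point you do not address. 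So the statement you prove is reached by the right route up to the last identification, but that identification needs the repair above rather than the regular-subdivision workaround.
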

 \begin{proof}
  The non-degeneracy condition on $f$
 guarantees that $\loga{Y}$ is smooth over $S^\dagger$ at every point of $h^{-1}(O)$, by the same arguments as in the proof of Proposition \ref{prop:nondeg-smooth}.
 The remainder of the argument is identical to the proof of Theorem \ref{thm:nondeg}: we only need to take into account that $O(\sigma_\gamma)$
 lies in $h^{-1}(O)$ if  $\gamma$ is compact, and has empty intersection with $h^{-1}(O)$ otherwise.
\end{proof}

\begin{rmk}
 The monodromy conjecture for non-degenerate polynomials in at most $3$ variables has been proven  for the topological zeta function \cite{lema} and the $p$-adic and na\"\i ve motivic zeta functions \cite{bories} (in a weaker form, replacing roots of the Bernstein polynomial by local monodromy eigenvalues). See also \cite{loeser-nondeg} for partial results in arbitrary dimension in the $p$-adic setting.
\end{rmk}

\end{document}